\par\end{em}\end{center}\end{quotation}}
\newtheorem{theorem}{Theorem}[section]
\newtheorem{corollary}[theorem]{Corollary}
\newtheorem{lemma}[theorem]{Lemma}
\newtheorem{proposition}[theorem]{Proposition}
\theoremstyle{definition}
\newtheorem{definition}[theorem]{Definition}
\newtheorem{remark}[theorem]{Remark}
\newtheorem{example}[theorem]{Example}
\DeclareMathOperator{\add}{\mathsf{add}}
\DeclareMathOperator{\ann}{\mathsf{Ann}}
\renewcommand{\ker}{\mathsf{Ker}}
\newcommand{\can}{\mathsf{can}}
\newcommand{\id}{\mathrm{id}}
\DeclareMathOperator{\Coh}{\mathsf{Coh}}
\DeclareMathOperator{\Qcoh}{\mathsf{QCoh}}
\DeclareMathOperator{\VB}{\mathsf{VB}}
\DeclareMathOperator{\Hom}{\mathsf{Hom}}
\DeclareMathOperator{\Ext}{\mathsf{Ext}}
\DeclareMathOperator{\End}{\mathsf{End}}
\DeclareMathOperator{\Mat}{\mathsf{Mat}}
\DeclareMathOperator{\Quot}{\mathsf{Quot}}
\DeclareMathOperator{\Spec}{\mathsf{Spec}}
\DeclareMathOperator{\Sp}{\mathsf{Sp}}
\DeclareMathOperator{\Max}{\mathsf{Max}}
\DeclareMathOperator{\Ob}{\mathsf{Ob}}
\DeclareMathOperator{\Mod}{\mathsf{Mod}}
\DeclareMathOperator{\idm}{\mathfrak{m}}
\DeclareMathOperator{\idp}{\mathfrak{p}}
\DeclareMathOperator{\idq}{\mathfrak{q}}
\newcommand{\kk}{\mathbbm{k}}
\newcommand{\NN}{\mathbb{N}}
\newcommand{\bbX}{\mathbbm{X}}
\newcommand{\bbY}{\mathbbm{Y}}
\newcommand{\bbU}{\mathbbm{U}}
\newcommand{\bbV}{\mathbbm{V}}
\newcommand{\bbW}{\mathbbm{W}}
\newcommand{\bbE}{\mathbbm{E}}
\newcommand{\XX}{\mathbb{X}}
\newcommand{\DD}{\mathbb{D}}
\newcommand{\Ad}{\mathsf{Ad}}
\newcommand{\xhar}{\xhookrightarrow{\phantom{xxx}}}
\newcommand{\kA}{\mathcal{A}}
\newcommand{\kB}{\mathcal{B}}
\newcommand{\kC}{\mathcal{C}}
\newcommand{\kE}{\mathcal{E}}
\newcommand{\kF}{\mathcal{F}}
\newcommand{\kG}{\mathcal{G}}
\newcommand{\kI}{\mathcal{I}}
\newcommand{\kJ}{\mathcal{J}}
\newcommand{\kO}{\mathcal{O}}
\newcommand{\kL}{\mathcal{L}}
\newcommand{\kP}{\mathcal{P}}
\newcommand{\kQ}{\mathcal{Q}}
\newcommand{\kR}{\mathcal{R}}
\newcommand{\kK}{\mathcal{K}}
\newcommand{\kV}{\mathcal{V}}
\newcommand{\kZ}{\mathcal{Z}}
\newcommand{\gS}{\mathfrak{S}}
\newcommand{\lar}{\longrightarrow}
\def\sA{\mathsf A} 
\def\sB{\mathsf B} 
\def\sC{\mathsf C} 
\def\sD{\mathsf D}
\newcommand\dhrightarrow{%
  \mathrel{\ooalign{$\rightarrow$\cr%
  $\mkern3.5mu\rightarrow$}}
}
\title[Morita theory for non--commutative schemes]{Morita theory for non--commutative noetherian schemes}
\author{Igor Burban}
\address{
Universit\"at Paderborn\\
Institut f\"ur Mathematik \\
Warburger Stra\ss{}e 100 \\
33098 Paderborn \\
Germany
}
\email{burban@math.uni-paderborn.de}
\author{Yuriy Drozd}
\address{
 Institute of Mathematics\\
National Academy of Sciences of Ukraine,
Tereschenkivska str. 3,
01004 Kyiv, Ukraine}
\email{drozd@imath.kiev.ua, y.a.drozd@gmail.com}
\begin{document}

\begin{abstract}
In this paper,  we study  equivalences between the categories of quasi--coherent sheaves on non--commutative noetherian schemes. In particular, give   a new proof of C\u{a}ld\u{a}raru's conjecture about Morita equivalences of  Azumaya algebras on noetherian schemes. Moreover, we derive necessary and sufficient condition for two reduced non--commutative curves to be Morita equivalent.
\end{abstract}
\maketitle

\section{Introduction} 

\noindent
A classical results of Gabriel (see 
\cite[Section VI.3]{Gabriel}) states  that the categories of quasi--coherent sheaves $\Qcoh(X)$ and $\Qcoh(Y)$ of two separated noetherian schemes $X$ and $Y$ are equivalent if and only if $X$ and $Y$ are isomorphic. To prove this result (and in particular to show how the scheme $X$ can be reconstructed from the category $\Qcoh(X)$), Gabriel used the full power of methods of homological algebra, developed in his thesis \cite{Gabriel}.

\medskip
\noindent
In this work, we deal with similar types of questions for the so--called non--commutative noetherian schemes. By definition, these are ringed spaces $\bbX  = (X, \kA)$, where $X$ is a separated noetherian scheme and $\kA$ is a sheaf of $\kO_X$--algebras, which is coherent viewed as an $\kO_X$--module. A  basic question arising in this context is to establish when the categories of quasi--coherent sheaves 
$\Qcoh(\bbX)$ and $\Qcoh(\bbY)$ on two such non--commutative noetherian schemes $\bbX$ and $\bbY$ are equivalent. 

\medskip
\noindent
We show first  that from the categorical perspective, $\bbX$ and $\bbY$  can without loss of generality assumed to be central; see Subsection \ref{SS:CentralizingNCNS} for details. Following Gabriel's approach \cite{Gabriel}, based on a detailed  study of indecomposable injective objects of $\Qcoh(\bbX)$, we prove that the central scheme $X$ can be recovered from the category $\Qcoh(\bbX)$; see Theorem \ref{T:MorphismCentralSchemes}. Using this reconstruction result, we prove a Morita theorem in the setting of central non--commutative noetherian schemes;  see Theorem \ref{T:MoritaTheoremNCNS} and the discussion afterwards.

\medskip
\noindent
As a first application of this result, we get a new proof of C\u{a}ld\u{a}raru's conjecture about  Azumaya algebras on noetherian schemes; see \cite[Conjecture 1.3.17]{CaldararuCrelle}. Namely, we show that 
if $\bbX = (X, \kA)$ and $\bbY = (Y, \kB)$ are two non--commutative noetherian schemes, such that $\kA$ and $\kB$ are Azumaya algebras on $X$ and $Y$ respectively, then $\Qcoh(\bbX)$ and $\Qcoh(\bbY)$ are equivalent if and only if 
there exists an isomorphism $Y \stackrel{f}\lar X$ such that
$f^\ast\bigl([\kA]\bigr) = [\kB] \in \mathsf{Br(Y)}$, where $\mathsf{Br(Y)}$  is the Brauer group of the scheme $Y$. This result was already proven by Antieau  
\cite{Antieau} (see also \cite{Perego} and \cite{CanonacoStellari}) by much more complicated methods.

\medskip
\noindent
Our main motivation to develop Morita theory in the setting of non--commutative algebraic geometry comes from the study of reduced non--commutative curves. By definition, these are central non--commutative noetherian schemes $\bbX = (X, \kA)$, for which $X$ is a reduced excellent noetherian scheme of pure dimension one and $\kA$ is a sheaf of $\kO_X$--orders. Our  goal was to derive a manageable criterion
to describe the Morita equivalence class of $\bbX$.  

\medskip
\noindent
From the historical perspective,  the so--called projective  hereditary non--commutative curves, i.e.~those $\bbX = (X, \kA)$, for which $X$  is a projective curve over some field $\kk$ and $\kA$ is a sheaf of hereditary orders, were originally of major interest. For $\bbX = \mathbbm{P}^1$, they appeared (in a different form) in the seminal work of Geigle and Lenzing \cite{GeigleLenzing} on weighted projective lines. Tilting theory on these curves  had a significant  impact on the development of the representation theory of finite dimensional $\kk$--algebras.
For an algebraically closed field $\kk$, projective hereditary non--commutative curves play a central role in the classification of  abelian noetherian $\kk$--linear $\Ext$--finite hereditary categories with Serre duality due to Reiten and van den Bergh \cite{ReitenvandenBergh} (see also 
\cite{LenzingReiten, Kussin} for the case of arbitrary fields). In the case of a finite field $\kk$, such  non--commutative curves appeared as  a key technical tool in the work of Laumon, Rapoport and Stuhler \cite{LaumonRapoportStuhler} in the framework of the Langlands programme.  The question of a classification of  non--commutative hereditary curves up to Morita equivalence was clarified by Spie\ss{} in  \cite{Spiess}. In this case,  the Morita equivalence class of such a curve $\bbX$ (however, not $\bbX$ itself, viewed  as a ringed space!) is determined by a central simple algebra $\Lambda_{\bbX}$ (which is an analogue of the function field of a commutative curve) and the types of non--regular points of $\bbX$; see Corollary \ref{C:HereditaryMorita} for details. 

\medskip
\noindent
However, the case of non--hereditary orders happened  to be more tricky. It turns out that even the central curve $X$,  the class of the algebra $\Lambda_{\bbX}$ in the Brauer group of the  function field of $X$ and the isomorphism classes of non--regular points of $\bbX$  are not sufficient to recover $\bbX$ (up to Morita equivalence); see Example \ref{E:ExampleNonHered}.  In Theorem \ref{T:MoritaNonCommCurves},  we give  necessary and sufficient  conditions for two reduced non--commutative curves to be Morita equivalent.

\medskip
\noindent
Non--hereditary reduced non--commutative projective curves  naturally arise as categorical resolutions of singularities of usual singular reduced commutative curves; see 
\cite{BurbanDrozdGavranIMRN}. From the point of view of  representation theory of  finite dimensional $\kk$--algebras, 
the so--called tame non--commutative projective nodal curves seem to be of particular importance; see 
\cite{bd, bdnpdalcurves}. Special classes of such curves  appeared in the framework  of the homological mirror symmetry (in a different language and  under the name  stacky chains/cycles of projective lines) in a work of  Lekili and Polishchuk \cite{LekiliPolishchuk}
as holomorphic mirrors of compact oriented surfaces with non--empty boundary; see also \cite[Section 7]{bdnpdalcurves}.   Getting a precise  description of Morita equivalence classes of tame non--commutative nodal curves (see \cite[Section 4]{bdnpdalcurves}) was   another motivation to carry out this work.

\smallskip
\noindent
\emph{Acknowledgement}.   The work of the first--named author was partially supported by the DFG project Bu--1866/4--1. The results of this paper
were mainly obtained during the stay of the second--named author at the University of Paderborn in September 2018 and September 2019. 

\section{Classical Morita theory and the center}

Through the whole paper we always suppose that the considered rings are associative with unit, modules are unital, 
homomorphisms map unit to unit and subrings contain the unit of the ring.

\subsection{Notation for module theory and reminder of the classical Morita theorem}
\smallskip
\noindent
For any ring $A$, we denote by $A^\circ$ the opposite ring, by $Z(A)$ the center of $A$ and 
by $A-\Mod$ (respectively, 
$\Mod-A$) the category of all left (respectively, right) $A$--modules. 

\smallskip
\noindent
For a commutative ring $R$, an   $R$--algebra  is a pair $(A, \imath)$, where 
$A$ is a ring and $R \stackrel{\imath}\lar A$ an injective homomorphism  such that $\imath(R) \subseteq Z(A)$. If $A$ is a finitely generated $R$--module then one says that $A$ is a \emph{finite} $R$--algebra.
Next,  $(A, \imath)$ is a \emph{central} $R$--algebra if   $\imath(R) = Z(A)$.  Usually, $R$ will be  viewed as a subset of $A$; in this case, the canonical inclusion map $\imath$ will be  suppressed from the notation. We denote by  $A^e := A \otimes_R A^\circ$ the enveloping $R$--algebra of $A$ and   identify the category of $(A-A)$--bimodules with the category  $A^e-\mathsf{Mod}$. The following result is well--known:

\begin{lemma}\label{L:CenterofRing}
If $A$ is an $R$--algebra, then the canonical map $Z(A) \lar \End_{A^e}(A)$ is an isomorphism.
Hence, if   $R$ is noetherian and $A$ is a finite $R$--algebra, then 
\begin{itemize}
\item for any multiplicative subset  $\Sigma \subset R$ we have: 
$\Sigma^{-1}\bigl(Z(A)\bigr) \cong Z\bigl(\Sigma^{-1} A\bigr)$;
\item for any $\idm \in \Max(R)$ we have: $\widehat{Z(A)}_{\idm} \cong
Z\bigl(\widehat{A}_{\idm}\bigr)$. 
\end{itemize}
\end{lemma}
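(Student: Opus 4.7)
The plan is to establish the ring isomorphism $Z(A) \cong \End_{A^e}(A)$ directly by writing down an explicit inverse to the canonical map, and then to derive the localization and completion statements by a standard flat base change argument, exploiting the fact that $A$ is finitely presented as a module over its enveloping algebra.

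For the first assertion, I would define the canonical map $Z(A) \to \End_{A^e}(A)$ by sending $z$ to the left multiplication operator $L_z \colon a \mapsto za$. Since $z$ is central, $L_z(bac) = zbac = bzac = b L_z(a) c$ for all $a, b, c \in A$, so $L_z$ is indeed an $A^e$-module endomorphism, and a direct check shows $z \mapsto L_z$ is an $R$-algebra homomorphism. To construct an inverse, I would observe that for any $\phi \in \End_{A^e}(A)$ we have $\phi(a) = \phi(1 \cdot 1 \cdot a) = \phi(1) a$ and symmetrically $\phi(a) = a \phi(1)$, so $\phi(1)$ lies in $Z(A)$ and $\phi = L_{\phi(1)}$. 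Thus $\phi \mapsto \phi(1)$ is a two-sided inverse.

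For the second part, the key input is that $A$ is finitely presented as a left $A^e$-module. This is clear once one notes that $A^e = A \otimes_R A^\circ$ is finitely generated as an $R$-module, hence left noetherian because $R$ is noetherian; and $A$ is cyclic over $A^e$, generated by~$1$. For any flat $R$-algebra $R'$ the standard base change isomorphism
\[
\End_{A^e}(A) \otimes_R R' \;\cong\; \End_{A^e \otimes_R R'}(A \otimes_R R')
\]
then applies. Combining this with the natural identifications $A \otimes_R R' \cong \Sigma^{-1} A$ (respectively $\widehat{A}_{\idm}$) and $A^e \otimes_R R' \cong (A \otimes_R R')^e$, and applying the first part of the lemma both to $A$ over $R$ and to the base-changed algebra over $R'$, one reads off $\Sigma^{-1} Z(A) \cong Z(\Sigma^{-1} A)$ and $\widehat{Z(A)}_{\idm} \cong Z(\widehat{A}_{\idm})$; here I use that localization and $\idm$-adic completion are flat functors that agree with $\otimes_R \Sigma^{-1} R$ respectively $\otimes_R \widehat{R}_{\idm}$ on finitely generated $R$-modules such as $Z(A) \subseteq A$.

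The proof is essentially bookkeeping and no serious obstacle is anticipated; the only point requiring a moment of care is the compatibility of the flat base change isomorphism with composition of endomorphisms, so that we truly obtain an isomorphism of $R'$-algebras and not merely of $R'$-modules. This follows directly from the functoriality of the construction and the fact that on both sides composition is characterised by evaluation at the distinguished generator $1 \in A$.
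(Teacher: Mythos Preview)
Your proof is correct. The paper states this lemma as ``well--known'' and gives no proof at all, so there is nothing to compare against; your argument supplies exactly the standard justification the authors had in mind, namely the explicit inverse $\phi \mapsto \phi(1)$ for the first part, and flat base change for $\Hom$ of a finitely presented module (using that $A^e$ is noetherian and $A$ is cyclic over it) for the localization and completion statements.
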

\smallskip
\noindent
Let $A, B$ be any rings and  $P = {}_{B}P_{A}$ be a $(B-A)$--bimodule. Recall that $P$ is called \emph{balanced}, if both structure maps
$$
B \stackrel{\lambda^P}\lar \End_A(P_A), \, b \mapsto \lambda_b^P  \;\; \mbox{\rm and} \; \; 
A^\circ \stackrel{\rho^P}\lar \End_B({}_{B}P), \, a \mapsto \rho_a^P
$$ 
are ring isomorphisms, where  $\lambda_b^P(x) = b x$ and $\rho_a^P(x) = 
x a$ for any $x \in P$, $a \in A$, $b \in B$.

\smallskip
\noindent
For an additive category $\sC$ and $X \in \Ob(\sC)$, we denote by $\add(X)$ the full subcategory of $\sC$, whose objects are direct summands of finite coproducts of $X$. 

\smallskip
\noindent
Let $A$ be any ring and $P$ be a finitely generated right $A$--module. Then $P$ is a \emph{progenerator} of $\Mod-A$ (or just right \emph{$A$--progenerator}) if 
$\add(P) = \add(A)$. In this case, for any
$M \in \Mod-A$ there exists a set $I$ and an epimorphism $P^{\oplus (I)} \lar M$. Other characterizations of right progenerators can be for instance found in \cite[Section 18B]{Lam}.

\smallskip
\noindent
Note that for any $(B-A)$--bimodules ${}_{B} P_{A}$ and ${}_{B} Q_{A}$, the canonical map 
\begin{equation}\label{E:MoritaYoneda}
\Hom_{B-A}\bigl({}_{B} P_{A}, {}_{B} Q_{A}\bigr) \lar 
\Hom\bigl({}_{B} P_{A} \otimes_A -, {}_{B} Q_{A} \otimes_A -\bigr)
\end{equation}
is an isomorphism, where $\Hom$ in the right hand side of (\ref{E:MoritaYoneda}) denotes the abelian group of natural transformations between the corresponding additive functors.

\begin{theorem}[Morita theorem for rings]\label{T:MoritaClassical}
Let $A, B$ be any rings and $$A-\Mod \stackrel{\Phi}\lar B-\Mod$$ be an equivalence of categories. Then we have: $\Phi \cong {}_{B} P_{A} \otimes_{A} -$, where $P$ is a balanced $(B-A)$--bimodule, which is a right progenerator of $A$ (in what follows, such  bimodule will be  called $(B-A)$--Morita bimodule). Moreover, if ${}_{B} Q_{A}$ is another $(B-A)$--Morita bimodule representing $\Phi$ then $P$ and 
$Q$ are canonically isomorphic as bimodules. 
\end{theorem}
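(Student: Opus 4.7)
The plan is to use an Eilenberg--Watts type construction. Set $P := \Phi(A) \in B-\Mod$. The ring homomorphism $A^\circ \to \End_A({}_AA)$, $a \mapsto \rho_a^A$, composed with $\Phi$ yields a ring homomorphism $A^\circ \to \End_B(P)$, equipping $P$ with a right $A$-action that commutes with its left $B$-action, so $P$ is a $(B, A)$-bimodule. Using the canonical identification $M \cong \Hom_A({}_AA, {}_AM)$, $m \leftrightarrow \lambda_m$ with $\lambda_m(a) := am$, the rule $(p, m) \mapsto \Phi(\lambda_m)(p)$ defines an $A$-balanced map $P \times M \to \Phi(M)$ (the relation $\lambda_{am} = \lambda_m \circ \rho_a^A$ being the crucial identity), which factors through a natural $B$-linear map $\eta_M\colon {}_B P_A \otimes_A M \to \Phi(M)$.

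To show each $\eta_M$ is an isomorphism, observe that $\eta_A$ is the canonical iso $P \otimes_A A \to P$. Since $\Phi$ is an equivalence it admits both adjoints and hence preserves all colimits, and so does $P \otimes_A -$. Therefore $\eta_{A^{(I)}}$ is an isomorphism for every index set $I$, and the five lemma applied to a free presentation $A^{(J)} \to A^{(I)} \to M \to 0$, using the right exactness of both functors, upgrades this to $\eta_M$ for every $M$.

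For the bimodule properties I would invoke the classical Morita criterion: $P \otimes_A -$ is an equivalence precisely when $P$ is a balanced $(B, A)$-bimodule which is a right $A$-progenerator. To unpack this inside the proof, I exploit the quasi-inverse $\Psi$: since $P \otimes_A -$ has right adjoint $\Hom_B(P, -)$ and an equivalence is simultaneously a left and right adjoint to its inverse, $\Psi \cong \Hom_B(P, -)$. Applying the natural isomorphism $\Psi\Phi \cong \id$ at $A$, together with $\End_{A-\Mod}({}_AA) \cong A^\circ$, yields the ring isomorphism $\End_B(P) \cong A^\circ$; dually $\Phi\Psi \cong \id$ at $B$ gives $\End_A(P_A) \cong B$, establishing balancedness. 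For the progenerator property, set $Q := \Hom_B(P, B)$; the counit and unit of the adjunction $(P \otimes_A -, \Hom_B(P, -))$ evaluated at $B$ and $A$ respectively unwind to the Morita--context bimodule isomorphisms $P \otimes_A Q \cong B$ and $Q \otimes_B P \cong A$. Writing $1_A = \sum_{i=1}^{n} q_i \otimes p_i$ and $1_B = \sum_{j=1}^{m} p'_j \otimes q'_j$ then produces a dual basis exhibiting $P$ as a finitely generated projective right $A$-module together with a split right $A$-epimorphism $P^{\oplus m} \twoheadrightarrow A$; hence $\add(P_A) = \add(A_A)$.

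Finally, uniqueness follows at once from (\ref{E:MoritaYoneda}): any natural isomorphism ${}_B P_A \otimes_A - \cong {}_B Q_A \otimes_A -$ corresponds to a unique $(B, A)$-bimodule morphism $P \to Q$, which must itself be an isomorphism since evaluating the natural iso at $A$ recovers precisely this bimodule morphism. The main obstacle is the transfer of the progenerator property to the right $A$-side: this cannot be read off from formal adjunction considerations alone and genuinely requires the symmetric Morita context with the dual bimodule $Q$ and both trace isomorphisms.
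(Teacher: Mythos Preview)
The paper does not give its own proof of this theorem: immediately after the statement it writes ``A proof of this standard result can be for instance found in \cite[Chapter 18]{Lam}'' and closes with a \qed. Your Eilenberg--Watts construction is exactly the classical argument one finds in such references, and it is correct.

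One small point of presentation: the sentence ``dually $\Phi\Psi \cong \id$ at $B$ gives $\End_A(P_A) \cong B$'' is too quick as written --- evaluating $\Phi\Psi$ at $B$ yields the bimodule isomorphism $P \otimes_A Q \cong B$, not directly $\End_A(P_A) \cong B$. The isomorphism $\lambda^P\colon B \to \End_A(P_A)$ does follow, but from the two Morita--context isomorphisms $P \otimes_A Q \cong B$ and $Q \otimes_B P \cong A$ that you establish in the next paragraph (or, alternatively, from the observation that once $P_A$ is known to be a progenerator, $P \otimes_A -$ is already an equivalence onto $\End_A(P_A)\text{-}\Mod$, which forces $\lambda^P$ to be an isomorphism). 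This is a matter of ordering the steps, not a genuine gap.
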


\smallskip
\noindent 
A proof of this standard result can be for instance found in \cite[Chapter 18]{Lam}. \qed

\medskip
\noindent
The goal of this work is to generalize Theorem \ref{T:MoritaClassical} to various settings  of non--commutative noetherian schemes.

\subsection{Non--commutative noetherian schemes}

\begin{definition}\label{D:NCNS}
A non--commutative noetherian scheme (abbreviated as \emph{ncns}) is a ringed space $\bbX = (X, \kA)$,
where $X$ is a commutative \emph{separated noetherian} scheme and $\kA$ is a sheaf of $\kO$--algebras \emph{coherent} as $\kO$--module (here, $\kO = \kO_X$ denotes the structure sheaf of $X$).
We say that $\XX$ is \emph{central} if $O_x = Z(A_x)$ for any $x \in X$, where $O_x$ (respectively, $A_x$) is the stalk of $\kO$ (respectively, $\kA$) at the point $x$.
\end{definition}

\smallskip
\noindent
For a ncns $\bbX$, we shall denote by  $\Qcoh(\bbX)$  the category of quasi--coherent sheaves on $\bbX$, i.e. the category of sheaves of left $\kA$--modules which are quasi--coherent as sheaves of $\kO$--modules. 
For an open subset $U \subseteq X$ and $\kF \in \Qcoh(X)$, we shall use both notations
$\Gamma(U, \kF)$ and $\kF(U)$ for the corresponding group of local sections and write
$O(U) = \kO(U)$ and  $A(U) = \kA(U)$. Note that $A(U)$ is a finite $O(U)$--algebra. Moreover,   for any pair of open affine subsets $V \subseteq U \subseteq X$, the canonical map $O(V) \otimes_{O(U)} A(U) \rightarrow A(V)$ is an isomorphism of
$O(V)$--algebras. Similarly, for any $\kF \in \Qcoh(\bbX)$, the canonical map
\begin{equation}
O(V) \otimes_{O(U)} \Gamma(U, \kF) \lar \Gamma(V, \kF)
\end{equation}
is an isomorphism of $A(V)$--modules.

\smallskip
\noindent
For any open subset $U \subseteq X$, we get a ncns $\bbU := (U, \kA\big|_{U})$. Since $X$ is assumed to be noetherian, it admits a \emph{finite} open covering $X = U_1 \cup \dots \cup U_n$, where
$U_i = \Spec(R_i)$ for some  noetherian ring $R_i$. For any $1 \le i \le n$, let $A_i := A(U_i)$. As in
\cite[Chapitre VI]{Gabriel}, one can easily show that $\Qcoh(\bbX)$ is equivalent to an iterated \emph{Gabriel's recollement} of the abelian categories $A_1-\Mod, \dots, A_n-\Mod$ (see also Definition \ref{D:Recollement} below).
Since $\kA$ is a coherent  $\kO$--module, all rings
$A_1, \dots, A_n$ are noetherian. As in \cite[Chapitre VI, Th\'eor\`eme 1]{Gabriel}, one concludes that $\Qcoh(\bbX)$ is a \emph{locally noetherian} abelian category, whose subcategory
of \emph{noetherian objects} is the category $\Coh(\bbX)$ of coherent sheaves on $\bbX$; see 
\cite[Section II.4]{Gabriel} for the corresponding definitions. 

\smallskip
\noindent
Let $\bbX$ and $\bbY$ be two ncns and $\Qcoh(\bbX) \stackrel{\Phi}\lar \Qcoh(\bbY)$ be an equivalence of categories. It is clear that $\Phi$ restricts to an equivalence
$\Coh(\bbX) \stackrel{\Phi_{|}}\lar \Coh(\bbY)$ between the corresponding subcategories of noetherian objects. However, \cite[Section II.4, Th\'eor\`eme 1]{Gabriel} asserts that conversely, any equivalence $\Coh(\bbX) \rightarrow \Coh(\bbY)$ admits a unique (up to an isomorphism of functors) extension to
an equivalence $\Qcoh(\bbX) \rightarrow \Qcoh(\bbY)$ (in  \cite{Gabriel}, this result is attributed to Grothendieck and Serre). Hence, even being primarily interested in the study  of the category $\Coh(\bbX)$, it is technically more advantageous to work with  a larger category 
$\Qcoh(\bbX)$. One of the main reasons for this is a good behavior of the set $\Sp(\bbX)$
of the isomorphism classes of indecomposable injective objects of $\Qcoh(\XX)$ (see \cite[Section  IV.2]{Gabriel}), for which it is crucial  that $\Qcoh(\XX)$ is locally noetherian. 

\smallskip
\noindent
Note that if we assume $X$ to be just \emph{locally noetherian} then even the category $\Qcoh(X)$ need not be locally noetherian in general; see \cite[Section II.7]{ResiduesDuality}. Hence, dropping the assumption for a ncns $\bbX$ to be noetherian would lead to significant technical complications. 

\subsection{Reminder on the center of an additive category}
\begin{definition}
The  \emph{center} $Z(\sA)$ of an additive category $\sA$ is the set of endomorphisms of the identity functor $\mathsf{Id}_{\sA}$, i.e.
\begin{equation*}
Z(\sA) := 
\left\{
\eta = \bigl(\bigl(X \stackrel{\eta_X}\lar X\bigr)_{X \in \mathsf{Ob}(\sA)}\bigr) \left| 
\begin{array}{c}
\xymatrix{
X \ar[r]^-{\eta_X} \ar[d]_-{f} & X \ar[d]^-{f} \\
X' \ar[r]^-{\eta_{X'}} & X'
}
\end{array}
\right.
\;
\mbox{\rm is commutative for all} \; X \stackrel{f}\lar X'
\right\}.
\end{equation*}
It is easy to see that $Z(\sA)$ is a commutative ring. 
\end{definition}

\smallskip
\noindent
It is well--known (see e.g. \cite[Proposition II.2.1]{Bass}) that for   any ring $A$, the map 
\begin{equation}\label{E:CenterCenter}
Z(A) \stackrel{\upsilon}\lar  Z(A-\Mod), \; r \mapsto (\lambda^M_r)_{M \in \Ob(\sA)}
\end{equation}
is a ring isomorphism.

\medskip
\noindent
The following result must be well--known. Its proof reduces to lengthy but completely straightforward verifications and is  therefore left to an interested reader. 
\begin{proposition}\label{P:Centers}
Let $\sA$ and $\sB$ be additive categories, $\eta \in Z(\sA)$ and $\sA \stackrel{\Phi}\lar 
\sB$ be an additive functor satisfying the following conditions:
\begin{itemize}
\item $\Phi$ is essentially surjective. 
\item 
For any $X_1, X_2 \in \Ob(\sA)$ and $g \in \Hom_{\sB}\bigl(\Phi(X_1), \Phi(X_2)\bigr)$, there exist $X \in \Ob(\sA)$ and morphisms $X_1 \stackrel{t}\longleftarrow X \stackrel{f}\longrightarrow X_2$ in $\sA$ such that $\Phi(t)$ is an isomorphism and  $g = \Phi(f) \cdot\bigl(\Phi(t)\bigr)^{-1}$.
\end{itemize}
For any $Y \in \Ob(\sB)$ choose a pair $\bigl(X_Y, \xi_Y\bigr)$, where $X_Y \in \Ob(\sA)$ and
$\Phi(X_Y) \stackrel{\xi_Y}\lar Y$ is an isomorphism. 
Then the following statements are true. 

\smallskip
\noindent
$\bullet$ The unique endomorphism $\vartheta_Y \in \End_{\sB}(Y)$, which makes the diagram 
$$
\xymatrix{
\Phi(X_Y) \ar[r]^-{\xi_Y} \ar[d]_-{\Phi\bigl(\eta_{X_Y} \bigr)} & Y \ar[d]^-{\vartheta_Y}\\
\Phi(X_Y) \ar[r]^-{\xi_Y} & Y 
}
$$
commutative, does not depend on the choice of the pair $\bigl(X_Y, \xi_Y\bigr)$.

\smallskip
\noindent
$\bullet$ Let $\vartheta = \bigl(\vartheta_Y\bigr)_{Y \in \Ob(\sB)}$, then we have: $\vartheta
\in Z(\sB)$ (in other words, for any $\eta \in Z(\sA)$, the family of endomorphisms
$\bigl(\Phi(\eta_X)\bigr)_{X \in \Ob(\sA)}$ in the category $\sB$ can be uniquely extended to an element $\vartheta
 \in Z(\sB)$). Moreover, the map
$
Z(\sA) \stackrel{\Phi_c}\lar Z(\sB), \; \eta \mapsto \vartheta
$
is a ring homomorphism. 

\smallskip
\noindent
$\bullet$  Let  $\sA \stackrel{\Psi}\lar 
\sB$ be   a functor such that  $\Phi \cong \Psi$.  Then  the induced maps of the corresponding centers  are equal: $\Phi_c = \Psi_c$. 
Finally, if  $\sA_1, \sA_2, \sA_3$ are additive categories and $\sA_1 \stackrel{\Phi_1}\lar 
\sA_2 \stackrel{\Phi_2}\lar 
\sA_3$ additive functors, satisfying the conditions of this proposition then we have: $
\bigl(\Phi_2 \Phi_1\bigr)_c = \bigl(\Phi_2)_c  \bigl(\Phi_1)_c.
$
\end{proposition}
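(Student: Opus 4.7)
\noindent
My strategy is to verify every assertion by reducing it to the roof decomposition provided by the second hypothesis and exploiting the naturality of $\eta$.  For the independence of $\vartheta_Y$ from the choice of $(X_Y,\xi_Y)$: given a second choice $(X_Y',\xi_Y')$, apply the hypothesis to $g := (\xi_Y')^{-1}\circ \xi_Y \in \Hom_{\sB}(\Phi(X_Y),\Phi(X_Y'))$ to obtain $X\in\Ob(\sA)$ and a roof $X_Y \stackrel{t}\longleftarrow X \stackrel{f}\longrightarrow X_Y'$ with $\Phi(t)$ invertible and $g = \Phi(f)\Phi(t)^{-1}$.  From $\eta \in Z(\sA)$ the squares $\eta_{X_Y}\circ t = t\circ\eta_X$ and $\eta_{X_Y'}\circ f = f\circ\eta_X$ commute; applying $\Phi$ and rearranging yields $g\circ\Phi(\eta_{X_Y}) = \Phi(\eta_{X_Y'})\circ g$, which is exactly $\xi_Y\Phi(\eta_{X_Y})\xi_Y^{-1} = \xi_Y'\Phi(\eta_{X_Y'})(\xi_Y')^{-1}$.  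The very same roof trick, now applied to $(\xi_{Y'})^{-1}\circ h \circ \xi_Y$ for a morphism $h\colon Y\to Y'$ in $\sB$, establishes $h\circ\vartheta_Y = \vartheta_{Y'}\circ h$ and hence $\vartheta \in Z(\sB)$.  That the assignment $\eta\mapsto \vartheta$ is a ring homomorphism then follows from the additivity and functoriality of $\Phi$ together with $\id\mapsto\id$.

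\noindent
Next I would address invariance under natural isomorphism and under composition.  Given a natural isomorphism $\alpha\colon \Phi \to \Psi$ and a chosen pair $(X_Y,\xi_Y)$ for $\Phi$, I take $(X_Y, \xi_Y\circ\alpha_{X_Y}^{-1})$ as the corresponding pair for $\Psi$; naturality of $\alpha$ at $\eta_{X_Y}$ immediately gives $\Psi_c(\eta)_Y = \Phi_c(\eta)_Y$.  For $(\Phi_2\Phi_1)_c = (\Phi_2)_c(\Phi_1)_c$, one must first check that the composition $\Phi_2\Phi_1$ still satisfies the hypotheses.  Essential surjectivity is clear; for the roof condition, apply the $\Phi_2$-hypothesis to a given $g\colon \Phi_2\Phi_1(X_1)\to \Phi_2\Phi_1(X_2)$ to obtain $s\colon Y\to\Phi_1(X_1)$ and $h\colon Y\to\Phi_1(X_2)$ in $\sA_2$ with $\Phi_2(s)$ invertible and $g=\Phi_2(h)\Phi_2(s)^{-1}$; use essential surjectivity of $\Phi_1$ to realise $Y$ as $\Phi_1(X)$ via some isomorphism $\beta$; apply the $\Phi_1$-hypothesis first to $s\circ\beta\colon\Phi_1(X)\to\Phi_1(X_1)$ to get a roof $X_1 \stackrel{u}\longleftarrow X'' \stackrel{t'}\longrightarrow X$, and then to $h\circ\beta\circ\Phi_1(t')\colon\Phi_1(X'')\to\Phi_1(X_2)$ to get a roof $X_2 \stackrel{v}\longleftarrow \tilde X \stackrel{t''}\longrightarrow X''$.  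Setting $T := u\circ t''$ and $V := v$, a direct verification shows that $X_1 \stackrel{T}\longleftarrow \tilde X \stackrel{V}\longrightarrow X_2$ is the sought roof, because $\Phi_2\Phi_1(T) = \Phi_2(s)\Phi_2(\beta)\Phi_2\Phi_1(t')\Phi_2\Phi_1(t'')$ is a composition of isomorphisms and the identity $g = \Phi_2\Phi_1(V)\circ\Phi_2\Phi_1(T)^{-1}$ follows by substitution.

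\noindent
Having established that $(\Phi_2\Phi_1)_c$ is well defined, the comparison with $(\Phi_2)_c(\Phi_1)_c$ is a matter of making compatible choices of pairs: for $Z\in\Ob(\sA_3)$ pick $(X_Z,\xi_Z)$ as the reference pair for $\Phi_2\Phi_1$, then use $(\Phi_1(X_Z),\xi_Z)$ as the reference pair for $\Phi_2$ at $Z$ and $(X_Z,\id_{\Phi_1(X_Z)})$ as the reference pair for $\Phi_1$ at $\Phi_1(X_Z)$.  With these choices both sides of the desired equality collapse to $\xi_Z\circ \Phi_2\Phi_1(\eta_{X_Z})\circ \xi_Z^{-1}$, using the independence-of-choice statement already established for $(\Phi_1)_c$.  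The main technical obstacle throughout is precisely the amalgamation of the two successive $\Phi_1$-roofs into a single one for $\Phi_2\Phi_1$; everything else is naturality and diagram bookkeeping, which is why the authors are content to leave the details to the reader.
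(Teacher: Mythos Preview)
Your proposal is correct and carries out precisely the ``lengthy but completely straightforward verifications'' that the paper explicitly leaves to the reader; the paper gives no proof of its own beyond that remark. Your roof-based argument for independence and naturality, the compatible choice of reference pairs for the functoriality under composition, and the amalgamation of successive $\Phi_1$-roofs into a single roof for $\Phi_2\Phi_1$ are exactly the expected steps, and I checked that your formula $\Phi_2\Phi_1(T)=\Phi_2(s)\Phi_2(\beta)\Phi_2\Phi_1(t')\Phi_2\Phi_1(t'')$ and the resulting identity $g=\Phi_2\Phi_1(V)\Phi_2\Phi_1(T)^{-1}$ indeed hold.
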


\smallskip
\noindent
 From the point of view of applications in this paper, the  following two classes of functors satisfying the conditions of 
Proposition \ref{P:Centers} are of major interest:
\begin{itemize}
\item Equivalences of additive categories.
\item Serre quotient functors $\sA \rightarrow  \sA/\sC$, where $\sC$ is a Serre subcategory of an abelian category $\sA$; see for instance \cite[Section 4.3]{Popescu}.
\end{itemize}

\begin{lemma}\label{L:MoritaCentralMap}
Let $A, B$ be any rings and $P$ be a $(B-A)$--Morita bimodule. Then there exists a unique isomorphism of centers $Z(A) \stackrel{\varphi}\lar Z(B)$ making the diagram 
\begin{equation}\label{E:CommDiag}
\begin{array}{c}
\xymatrix{
\End_A(P) & B \ar[l]_-{\lambda^P} \\
{Z(A)\hbox{\large$\phantom{\arrowvert}$}} \ar[r]^-{\varphi} \ar@{^{(}->}[u]^-{\rho^P} & {Z(B)\hbox{\large$\phantom{\arrowvert}$}} \ar@{_{(}->}[u]
}
\end{array}
\end{equation}
commutative. In other words, for any $a \in Z(A)$ and $x \in P$ we have: 
$\varphi(a) \cdot x = x \cdot a$. Moreover, $\varphi = \Phi_c$, where
$\Phi := P \otimes_A -: \; A-\Mod \lar B-\Mod$.
\end{lemma}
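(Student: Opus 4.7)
The core observation is that for any $a \in Z(A)$ the map $\rho^P_a : P \to P$, $x \mapsto xa$, is not merely $B$-linear (which follows from the bimodule structure alone) but actually right $A$-linear as well: for any $a' \in A$ and $x \in P$, centrality gives $\rho^P_a(xa') = xa a' = xa'a = \rho^P_a(x)\cdot a'$. Hence $\rho^P_a \in \End_A(P)$, so by the bijectivity of $\lambda^P$ there is a unique element $\varphi(a) \in B$ with $\lambda^P(\varphi(a)) = \rho^P_a$, i.e.\ satisfying $\varphi(a)\cdot x = x \cdot a$ for all $x \in P$. This equation also shows that $\varphi(a)$ commutes with every $b \in B$, since for all $x \in P$ one has $\varphi(a)bx = bxa = b\varphi(a)x$ and $\lambda^P$ is injective; thus $\varphi(a) \in Z(B)$. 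The same identity makes additivity and multiplicativity of $\varphi$ routine, and uniqueness of $\varphi$ making \eqref{E:CommDiag} commute is immediate from the injectivity of $\lambda^P$.

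To show that $\varphi$ is a bijection, I would run the symmetric construction: for $b \in Z(B)$ the map $\lambda^P_b : x \mapsto bx$ lies in $\End_B(P) = \rho^P(A^\circ)$, producing a map $\psi : Z(B) \to Z(A)$. The defining relations $\varphi(a)x = xa$ and $x\psi(b) = bx$ make it immediate that $\psi \circ \varphi = \id_{Z(A)}$ and $\varphi \circ \psi = \id_{Z(B)}$. (Equivalently, one can invoke Lemma \ref{L:CenterofRing} applied to the ring $\End_A(P) \cong B$ to see that the image $\rho^P(Z(A))$ coincides with the centre of $\End_A(P)$ pulled back through $\lambda^P$, which is exactly $\lambda^P(Z(B))$.)

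It remains to identify $\varphi$ with $\Phi_c$ under the canonical isomorphism \eqref{E:CenterCenter}. Fix $a \in Z(A)$ and let $\eta = (\lambda^M_a)_{M \in \Ob(A-\mathsf{Mod})} \in Z(A-\mathsf{Mod})$ be its image. For each $A$-module $M$, the functoriality of $\otimes$ yields
\begin{equation*}
\Phi(\eta_M) \;=\; \id_P \otimes \lambda^M_a : \; P \otimes_A M \lar P\otimes_A M, \qquad x \otimes m \;\longmapsto\; x \otimes am.
\end{equation*}
Using $xa = \varphi(a)x$, this equals $x \otimes am = xa \otimes m = \varphi(a)x \otimes m$, i.e.\ $\Phi(\eta_M) = \lambda^{P\otimes_A M}_{\varphi(a)}$. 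Since $\Phi = P \otimes_A -$ is an equivalence, every object of $B-\mathsf{Mod}$ is isomorphic (in $\Qcoh$-style fashion) to one of the form $P \otimes_A M$, and the rule in Proposition \ref{P:Centers} assembles these endomorphisms into the central element $\vartheta = (\lambda^N_{\varphi(a)})_{N \in \Ob(B-\mathsf{Mod})}$. Under \eqref{E:CenterCenter} this corresponds precisely to $\varphi(a) \in Z(B)$, proving $\Phi_c = \varphi$.

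The only subtle point, and what I would describe as the main obstacle, is the very last identification: one must be careful that the recipe of Proposition \ref{P:Centers} (choosing presentations $\Phi(X_N) \xrightarrow{\xi_N} N$) really produces the family $(\lambda^N_{\varphi(a)})_N$ on \emph{every} $B$-module, not only on those of the form $P \otimes_A M$. This follows from naturality of $\vartheta$ together with the observation that $\lambda^{(-)}_{\varphi(a)}$ is itself a natural transformation of $\id_{B-\mathsf{Mod}}$, so the two elements of $Z(B-\mathsf{Mod})$ agree on a class of objects that generates via isomorphisms.
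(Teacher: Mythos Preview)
Your argument is correct. The main difference from the paper's proof is one of organisation rather than substance. You construct $\varphi$ directly from the diagram by setting $\varphi(a) := (\lambda^P)^{-1}(\rho^P_a)$, verify by hand that it lands in $Z(B)$, is a ring map, and is bijective (via the symmetric construction $\psi$), and only afterwards identify it with $\Phi_c$. The paper does the reverse: it \emph{defines} $\varphi := \Phi_c$, which is automatically a ring isomorphism because $\Phi$ is an equivalence and $(\,\cdot\,)_c$ is functorial (Proposition~\ref{P:Centers}), and then checks commutativity of \eqref{E:CommDiag} by a single computation on the module $P \cong P \otimes_A A$. Your route is more self-contained and avoids invoking the general machinery of Proposition~\ref{P:Centers} for the existence and bijectivity of $\varphi$; the paper's route is shorter because those properties come for free once one starts from $\Phi_c$. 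The ``subtle point'' you flag at the end is handled in the paper simply by evaluating at the single object $P \otimes_A A$, which already pins down the element of $Z(B)$ via the isomorphism \eqref{E:CenterCenter}; your more general argument over all $P \otimes_A M$ is fine but not needed.
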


\begin{proof} Since $P$ is a balanced $(B-A)$--bimodule, the map $\lambda^P$ is bijective. This implies the uniqueness of $\varphi$. To show the existence, we prove that the induced map
of centers $Z(A) \stackrel{\Phi_c}\lar Z(B)$  
makes the diagram (\ref{E:CommDiag}) commutative. Let $a \in Z(A)$, $b = \Phi_c(a)$ and $\vartheta = \upsilon(b) \in Z(B-\Mod),$ where $\upsilon$ is the map from (\ref{E:CenterCenter}). Then we have: $\vartheta_P = \lambda^P_b$. Let $P \otimes_A A \stackrel{\gamma}\lar  P$ be the canonical isomorphism, then 
the following diagram 
$$
\xymatrix{
P \otimes_A A \ar[d]_-{\gamma} \ar[rr]^-{\mathsf{id}_P \otimes \lambda_a^A} & & P \otimes_A A \ar[d]^-{\gamma} \\
P \ar[rr]^-{\lambda_b^P} & & P
}
$$
is commutative (see 
Proposition \ref{P:Centers}),  what implies the statement. 
\end{proof}

\begin{remark}\label{R:Pseudocommutative}
Let $A$ and $B$ be two rings, $P$ be a $(B-A)$--Morita bimodule and $\Phi = P \otimes_A -$ be  the corresponding equivalence of categories. We may regard  $\Phi$ as  a ``virtual'' ring homomorphism $\xymatrix{A \ar@{.>}[r]^-{\Phi} & B}$. Then the commutativity of the diagram  (\ref{E:CommDiag}) can be rephrased  by saying that the diagram 
\begin{equation}\label{E:CommDiagr}
\begin{array}{c}
\xymatrix{
A \ar@{.>}[r]^-{\Phi} & B  \\
{Z(A)\hbox{\large$\phantom{\arrowvert}$}} \ar[r]^-{\varphi} \ar@{^{(}->}[u] & {Z(B)\hbox{\large$\phantom{\arrowvert}$}} \ar@{_{(}->}[u]
}
\end{array}
\end{equation}
is ``commutative''. Assume additionally that $A$ and $B$ are central $R$--algebras. 
We call  an equivalence $\Phi = P \otimes_A \,-\,$  \emph{central} if the induced map 
$R \stackrel{\Phi_c}\lar R$ is the identity.  According to Lemma \ref{L:MoritaCentralMap},  $\Phi$ is central if and only if for any $r$ in $R$ and $x \in P$ we have: $r \cdot x = x \cdot r$. 
\end{remark}

\begin{definition}\label{D:Recollement}
Let $\sA, \sB, \sD$ be abelian categories and
$
\sA \stackrel{\Phi}\lar \sD \stackrel{\Psi}\longleftarrow \sB
$
be exact functors.
The \emph{Gabriel's recollement} $\sA {\prod \atop \sD} \sB$
 is the category,  whose objects are triples 
$$
\left\{ 
(X, Y, f) \left| \begin{array}{c} 
X \in \Ob(\sA) \\
 Y \in \Ob(\sB) 
\end{array}
\right.
\Phi(X) \stackrel{f}\lar \Psi(Y) \; \mbox{is an isomorphism in}\; \sD
\right\}
$$
and a morphism $(X, Y, f) \stackrel{(\alpha, \beta)}\lar (X', Y', f')$ 
is given by morphisms $X \stackrel{\alpha}\lar X'$ and $Y \stackrel{\beta}\lar Y'$  such that 
$\Psi(\beta) f = f' \Phi(\alpha)$ see \cite[Section VI.1]{Gabriel}.
\end{definition}

\smallskip
\noindent
It is not difficult to check that 
the category  $\sC = \sA {\prod \atop \sD} \sB$  is abelian.
 Assume additionally, that $\Phi$ and $\Psi$ are localization functors,  i.e.~that they induce equivalences of categories
$$
\sA/\mathsf{Ker}(\Phi) \stackrel{\overline\Phi}\lar \sD \stackrel{\overline\Psi}\longleftarrow \sB/\mathsf{Ker}(\Psi),
$$
and  admit right adjoint functors
$
\sA \stackrel{\widetilde\Phi}\longleftarrow \sD   \stackrel{\widetilde\Psi}\lar \sB
$ (see \cite[Section III.2]{Gabriel}). Then we have a  diagram of abelian categories and functors
\begin{equation}\label{E:Recollement}
\begin{array}{c}
\xymatrix{
\sC \ar[r]^-{\Phi^\dagger} \ar[d]_-{\Psi^\dagger} & \sB \ar[d]^-{\Psi}\\
\sA \ar[r]^-{\Phi}  & \sD
}
\end{array}
\end{equation}
where $\Psi^\dagger(X, Y, f) = X$ and $\Phi^\dagger(X, Y, f) = Y$. Moreover,   $\Phi^\dagger$ and $\Psi^\dagger$ are localization functors and $\Phi \Psi^\dagger \cong 
\Psi \Phi^\dagger$.

\smallskip
\noindent
\begin{lemma}\label{P:CentersRecollement}
In the above setting, let $A, B, C, D$ be the centers of the categories $\sA, \sB, \sC$ and $\sD$, respectively. Then  (\ref{E:Recollement}) induces a commutative diagram in the category of rings
\begin{equation}\label{E:RecollCenters}
\begin{array}{c}
\xymatrix{
C \ar[r]^-{\Phi^\dagger_c} \ar[d]_-{\Psi^\dagger_c} & B \ar[d]^-{\Psi_c}\\
A \ar[r]^-{\Phi_c}  & D,
}
\end{array}
\end{equation}
which is moreover a \emph{pull--back} diagram. In other words, we have: 
$$
C \cong A \times_D B := \bigl\{(a, b) \in A \times B \; \bigl| \; \Phi_c(a) = \Psi_c(b)  \bigr\}.
$$
\end{lemma}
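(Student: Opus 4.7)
The commutativity of~(\ref{E:RecollCenters}) is immediate from Proposition~\ref{P:Centers}: since $\Phi \Psi^\dagger \cong \Psi \Phi^\dagger$ as functors $\sC \to \sD$, and the assignment $\Phi \mapsto \Phi_c$ is functorial and depends only on the isomorphism class of~$\Phi$, we obtain
$$
\Phi_c \Psi^\dagger_c \,=\, (\Phi\Psi^\dagger)_c \,=\, (\Psi\Phi^\dagger)_c \,=\, \Psi_c \Phi^\dagger_c.
$$
By the universal property of the pull--back, this induces a canonical ring homomorphism
$$
\rho\colon C \lar A \times_D B, \qquad \gamma \longmapsto \bigl(\Psi^\dagger_c(\gamma),\, \Phi^\dagger_c(\gamma)\bigr),
$$
and the content of the lemma is that $\rho$ is bijective.

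\smallskip
\textbf{Key observation.} For any $\gamma \in C$ and any object $(X,Y,f) \in \Ob(\sC)$, Definition~\ref{D:Recollement} forces $\gamma_{(X,Y,f)}$ to be a pair $(\alpha_{X,Y,f},\, \beta_{X,Y,f})$ with $\alpha_{X,Y,f} \in \End_\sA(X)$ and $\beta_{X,Y,f} \in \End_\sB(Y)$ satisfying $\Psi(\beta_{X,Y,f})\,f = f\,\Phi(\alpha_{X,Y,f})$. Because $\Phi^\dagger$ and $\Psi^\dagger$ are localization functors, they satisfy the hypotheses of Proposition~\ref{P:Centers}; using the independence of choices asserted there, I compute $\bigl(\Psi^\dagger_c(\gamma)\bigr)_X$ by taking the preimage of $X$ to be $(X,Y,f)$ \emph{itself} with $\xi = \id_X$, which yields
$$
\bigl(\Psi^\dagger_c(\gamma)\bigr)_X \,=\, \alpha_{X,Y,f}, \qquad \bigl(\Phi^\dagger_c(\gamma)\bigr)_Y \,=\, \beta_{X,Y,f}.
$$
Injectivity of $\rho$ follows at once: if both $\Psi^\dagger_c(\gamma)$ and $\Phi^\dagger_c(\gamma)$ vanish then so do both components of every $\gamma_{(X,Y,f)}$, hence $\gamma = 0$.

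\smallskip
\textbf{Surjectivity.} Given $(\mu, \nu) \in A \times_D B$, set $\delta := \Phi_c(\mu) = \Psi_c(\nu) \in D$ and define
$$
\gamma_{(X,Y,f)} \,:=\, \bigl(\mu_X,\, \nu_Y\bigr).
$$
Applying the ``identity'' choice in Proposition~\ref{P:Centers} once more, $\delta_{\Phi(X)} = \Phi(\mu_X)$ and $\delta_{\Psi(Y)} = \Psi(\nu_Y)$; the naturality of $\delta$ at the morphism $f\colon \Phi(X) \to \Psi(Y)$ then gives
$$
\Psi(\nu_Y)\, f \,=\, \delta_{\Psi(Y)}\, f \,=\, f\, \delta_{\Phi(X)} \,=\, f\, \Phi(\mu_X),
$$
so $\gamma_{(X,Y,f)}$ is a well--defined morphism in $\sC$. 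The naturality of $\gamma$ with respect to an arbitrary morphism $(\alpha, \beta)\colon (X_1,Y_1,f_1) \to (X_2,Y_2,f_2)$ in $\sC$ reduces componentwise to the separate naturality of $\mu$ on $\sA$ and $\nu$ on $\sB$. Hence $\gamma \in C$, and $\rho(\gamma) = (\mu, \nu)$ by the key observation.

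\smallskip
\textbf{Expected obstacle.} The argument is essentially bookkeeping; the only mildly delicate point is exploiting the freedom in the choice of preimage in Proposition~\ref{P:Centers} so that the projection functors $\Psi^\dagger$ and $\Phi^\dagger$ literally read off the two components of $\gamma_{(X,Y,f)}$. Once this is in place, both well--definedness of the candidate $\gamma \in C$ in the surjectivity step and its naturality are automatic from the naturality of $\mu$, $\nu$ and~$\delta$.
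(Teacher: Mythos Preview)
Your proof is correct and follows exactly the route the paper intends: the paper's entire argument is the single sentence ``This statement is a consequence of Proposition~\ref{P:Centers}'', and what you have written is a careful unpacking of that consequence. Your use of the freedom in the choice of preimage (taking $(X,Y,f)$ itself with $\xi=\id$) to identify $\Psi^\dagger_c(\gamma)_X$ and $\Phi^\dagger_c(\gamma)_Y$ with the two components of $\gamma_{(X,Y,f)}$ is exactly the bookkeeping the authors had in mind, and the surjectivity step via naturality of $\delta = \Phi_c(\mu) = \Psi_c(\nu)$ at the gluing isomorphism $f$ is the right verification.
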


\smallskip
\noindent
\emph{Comment to the proof}. This statement is a consequence of  Proposition \ref{P:Centers}. \qed

\smallskip
\noindent
We conclude this subsection with the following categorical version of the classical Skolem--Noether theorem. 

\begin{theorem}\label{T:SkolemNoether}
Let $\kk$ be a  field, $\Lambda$ and $\Gamma$ two semi--simple finite dimensional $\kk$--algebras and 
$
\xymatrix{
\Lambda-\mathsf{Mod}   \ar@/^2pt/[r]^{\Phi}    \ar@/_2pt/[r]_{\Psi} & \Gamma-\mathsf{Mod}
}
$
two equivalences of categories such that
$\Phi_c = \Psi_c$. Then we have: $\Phi \cong  \Psi$.
\end{theorem}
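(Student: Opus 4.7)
The plan is to represent $\Phi$ and $\Psi$ by bimodules via the classical Morita theorem, decompose into simple blocks using Wedderburn, and within each block conclude by the standard fact that modules over a central simple algebra are determined by their dimension.

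By Theorem \ref{T:MoritaClassical}, there exist $(\Gamma, \Lambda)$--Morita bimodules $P$ and $Q$, unique up to bimodule isomorphism, with $\Phi \cong P \otimes_\Lambda -$ and $\Psi \cong Q \otimes_\Lambda -$, so it suffices to prove $P \cong Q$ as bimodules. Writing $\Lambda = \prod_{i \in I} \Lambda_i$ and $\Gamma = \prod_{j \in J} \Gamma_j$ as products of simple algebras, the central idempotents split $P = \bigoplus_{i,j} P_{ij}$ (and similarly $Q$). Because $P$ induces an equivalence, there is a bijection $\sigma \colon I \to J$ such that each $P_{i,\sigma(i)}$ is a $(\Gamma_{\sigma(i)}, \Lambda_i)$--Morita bimodule and the remaining pieces vanish; analogously one has $\tau$ for $Q$. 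Both $\sigma$ and $\tau$ are read off from the action of $\Phi_c$, respectively $\Psi_c$, on the primitive central idempotents of $\Lambda$, so the hypothesis $\Phi_c = \Psi_c$ forces $\sigma = \tau$ and identifies the induced isomorphisms between the centers of the matching blocks. This reduces the problem to the case when $\Lambda$ and $\Gamma$ are both simple.

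In this simple case, $\Lambda$ and $\Gamma$ are central simple over the fields $F = Z(\Lambda)$ and $K = Z(\Gamma)$, and by Lemma \ref{L:CenterofRing} the map $\varphi := \Phi_c = \Psi_c \colon F \to K$ is a ring isomorphism. Using $\varphi$ to view $\Gamma$ as an $F$--algebra, Lemma \ref{L:MoritaCentralMap} shows that both $P$ and $Q$ acquire the structure of modules over the $F$--algebra $C := \Gamma \otimes_F \Lambda^\circ$, and bimodule isomorphisms coincide with $C$--linear isomorphisms. Since $\Lambda$ and $\Gamma$ are central simple over $F$, the tensor product $C$ is again a central simple $F$--algebra, and hence every finitely generated $C$--module is determined up to isomorphism by its dimension over $F$.

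It remains to compare $\dim_F P$ and $\dim_F Q$. As right $\Lambda$--progenerators, $P \cong \Lambda^N$ and $Q \cong \Lambda^M$ for some $N, M \ge 1$. The balanced condition yields ring isomorphisms $\Gamma \cong \End_\Lambda(P) \cong M_N(\Lambda)$ and $\Gamma \cong M_M(\Lambda)$, so writing $\Lambda \cong M_n(D)$ for a division algebra $D$ and comparing matrix sizes in $M_{Nn}(D) \cong M_{Mn}(D)$ forces $N = M$. Therefore $\dim_F P = N \dim_F \Lambda = \dim_F Q$, whence $P \cong Q$ as $C$--modules and thus as $(\Gamma, \Lambda)$--bimodules, giving $\Phi \cong \Psi$. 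The delicate point I expect is the Wedderburn reduction, where one must verify that the block decompositions of $P$ and $Q$ really match up under $\Phi_c = \Psi_c$ together with the induced identifications of the centers of paired blocks; once this is in place, the simple case is essentially Skolem--Noether repackaged as uniqueness of simples over a central simple algebra.
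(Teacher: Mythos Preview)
Your overall strategy is correct and is a genuine variant of the paper's argument. Both proofs reduce to the simple case via the block decomposition and the observation that $\Phi_c = \Psi_c$ pins down the matching of blocks together with the identifications of their centers. The divergence is in the simple case: the paper first shows $P \cong Q$ as right $\Lambda$--modules (by comparing $\End_\Lambda(P)$ and $\End_\Lambda(Q)$ as $K$--algebras) and then invokes the classical Skolem--Noether theorem to upgrade a chosen right $\Lambda$--linear isomorphism to a bimodule isomorphism. You instead pass to the enveloping algebra $C = \Gamma \otimes_F \Lambda^\circ$, observe that it is central simple over $F$, and conclude by the structure theory of $C$--modules. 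Your route is slicker and hides Skolem--Noether inside the statement that $C$ is simple (indeed, simplicity of $A \otimes_F A^\circ$ is one standard way to \emph{prove} Skolem--Noether); the paper's route is more explicit and actually writes down the bimodule isomorphism.

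There is one slip you should fix. In the simple case you assert that $P \cong \Lambda^N$ as a right $\Lambda$--module, but a progenerator over a simple Artinian ring need not be free: if $\Lambda \cong M_n(D)$ with unique simple right module $U$, then $P \cong U^{\,p}$ for some $p \ge 1$, and $p$ need not be divisible by $n$ (e.g.\ $\Lambda = M_2(\kk)$, $\Gamma = M_3(\kk)$, $P = U^{3}$). The remedy is immediate and does not affect your argument: write $P \cong U^{\,p}$, $Q \cong U^{\,q}$, so that $\Gamma \cong \End_\Lambda(P) \cong M_p(D^\circ)$ and $\Gamma \cong M_q(D^\circ)$; comparing $\kk$--dimensions (or $F$--dimensions) gives $p = q$, hence $\dim_F P = \dim_F Q$, and your $C$--module argument concludes as before.
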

\begin{proof} Let $K = Z(\Lambda)$, $L = Z(\Gamma)$ and $K \stackrel{\varphi}\lar L$ be
the common isomorphism of centers induced by the equivalences $\Phi$ and $\Psi$ (i.e.
$\Phi_c = \varphi = \Psi_c$). Next, let  $P$ and $Q$ be $(\Gamma-\Lambda)$--bimodules such that $\Phi = P \otimes_\Lambda \,-\,$ and $\Psi = Q \otimes_\Lambda \,-\,$. Let $\gamma = \bigl(\lambda_\Gamma^Q\bigr) \circ \bigl(\lambda_\Gamma^P\bigr)^{-1}$.  By Lemma 
\ref{L:MoritaCentralMap},
the following diagram of $\kk$--algebras and algebra homomorphisms
\begin{equation}\label{E:CDiag}
\begin{array}{c}
\xymatrix{
\Gamma \ar@/_5.5ex/[dd]_-{\id} \ar[rr]^-{\lambda_\Gamma^P} & & \End_\Lambda(P) \ar@/^5.5ex/[dd]^-{\gamma}\\
{\hbox{\large$\phantom{\arrowvert}$\!}L} \ar@{^{(}->}[u] \ar@{_{(}->}[d] & 
& \ar[ll]_-{\varphi} {\hbox{\large$\phantom{\arrowvert}$}K} \ar@{_{(}->}[u]_-{\varrho^P_K} \ar@{^{(}->}[d]^-{\varrho^Q_K} \\
\Gamma \ar[rr]^-{\lambda_\Gamma^Q} & & \End_\Lambda(Q) \\
}
\end{array}
\end{equation}
is commutative. In particular, we have: $\gamma \circ \varrho^P_K = \varrho^Q_K$.

\smallskip
\noindent
Since $\Lambda$ is a semi--simple $\kk$--algebra, there exist simple algebras $\Lambda_1, \dots, \Lambda_t$ such that $\Lambda \cong \Lambda_1 \times \dots \times \Lambda_t$. Moreover, for any $1 \le i \le t$ there exists a finite dimensional skew field $F_i$ over $\kk$ such that $\Lambda_i \cong \mathsf{Mat}_{m_i}(F_i)$ for some $m_i \in \NN$. If $K_i := Z(F_i)$ then we have: $K \cong K_1 \times \dots \times K_t$. Let $U_i$ be a finite dimensional simple right $\Lambda_i$--module (which is unique up to an isomorphism). Then we have: $F_i \cong \End_{\Lambda_i}(U_i)$. Moreover, we have direct sum decompositions
$
P \cong U_1^{\oplus p_1} \oplus \dots \oplus U_t^{\oplus p_t}$ and $
Q \cong U_1^{\oplus q_1} \oplus \dots \oplus U_t^{\oplus q_t}.
$
Then we get:
$$
\End_\Lambda(P) \cong \mathsf{Mat}_{p_1}(F_1) \times \dots \times \mathsf{Mat}_{p_t}(F_t)
\quad \mbox{\rm and} \quad
\End_\Lambda(Q) \cong \mathsf{Mat}_{q_1}(F_1) \times \dots \times \mathsf{Mat}_{q_t}(F_t).
$$
It follows from (\ref{E:CDiag}) that there exists  an isomorphism of \emph{$K$--algebras} (and not just of \emph{$\kk$--algebras})
$
\mathsf{Mat}_{p_1}(F_1) \times \dots \times \mathsf{Mat}_{p_t}(F_t)
\lar \mathsf{Mat}_{q_1}(F_1) \times \dots \times \mathsf{Mat}_{q_t}(F_t),
$
what implies that $p_i = q_i$ for all $1 \le i \le  t$. In particular, $P$ and $Q$ are isomorphic, at least as right $\Lambda$--modules. 

\smallskip
\noindent
Let $P \stackrel{h}\lar Q$ be any isomorphism of right $\Lambda$--modules. Then the following diagram
\begin{equation*}
\begin{array}{c}
\xymatrix{
\End_\Lambda(P) \ar[r]^-{\Ad_h}& \End_\Lambda(Q) \\
{\hbox{\large$\phantom{\arrowvert}$\!}K} \ar@{_{(}->}[u]^-{\rho^P_K} \ar[r]^-{\id} & {\hbox{\large$\phantom{\arrowvert}$\!}K} \ar@{^{(}->}[u]_-{\rho^Q_K}
}
\end{array}
\end{equation*}
is commutative, i.e.~the isomorphism $\Ad_h$ is central.  Indeed, for any $f \in \End_\Lambda(P)$ we have: $\Ad_h(f) h = h f$. For any $\lambda \in K$ consider the endomorphism
$\varrho_\lambda^P \in \End_\Lambda(P)$. Since $h$ is $K$--linear, we have: $h \varrho_\lambda^P = \varrho_\lambda^Q h$. Hence, $\varrho_\lambda^Q = \Ad_h(\varrho_\lambda^P)$ for 
any $\lambda \in K$. 

\smallskip
\noindent
Consider the map $\delta:= \Ad_h \cdot  \gamma^{-1}: \End_\Lambda(Q) \lar \End_\Lambda(Q)$. From what was said above it follows that $\delta$ is an isomorphism of $K$--algebras. Now we can finally apply the classical Skolem--Noether theorem: there exists $\bar{h} \in \End_\Lambda(Q)$ such that $\delta = \Ad_{\bar{h}}$. Consider the isomorphism of right $\Lambda$--modules $g = \bar{h}^{-1} h: P \lar Q$. Then we have: 
$
\gamma = \Ad_{\bar{h}}^{-1} \Ad_h = \Ad_{g}.
$
It follows from commutativity of (\ref{E:CDiag}) that the diagram 
$$
\xymatrix{
& \Gamma \ar[rd]^-{\lambda_\Gamma^Q} \ar[ld]_-{\lambda_\Gamma^P}& \\
\End_\Lambda(P) \ar[rr]^-{\Ad_g} & & \End_\Lambda(Q)
}
$$
is commutative, too. Hence,  $P \stackrel{g}\lar Q$ is also $\Gamma$--linear. Summing up, $g$ is an isomorphism of $(\Gamma-\Lambda)$--bimodules and 
$\Phi \cong \Psi$, as asserted. 
\end{proof}

\subsection{Centralizing a non--commutative 
noetherian schemes}\label{SS:CentralizingNCNS}
The goal of this subsection is to show, that any ncns can be replaced by a Morita equivalent central ncns.

\begin{proposition}\label{P:CentralSheaf} 
Let $\bbX = (X, \kA)$ be a ncns. For all open subsets $U \subseteq X$  we put:
\begin{equation}
\Gamma(U, \kZ_\kA):= 
\Bigl\{\alpha \in \Gamma(U, \kA) \, \left|\, \alpha\big|_{V} \in 
Z\bigl(\Gamma(V, \kA)\bigr) \; \mbox{for all}\; V \subseteq U \;\mbox{open} \right.\Bigr\}.
\end{equation}
Then  $\kZ = \kZ_\kA$ is a coherent sheaf on $X$ such that $
\kZ_x \cong Z(A_x)$ for any $x \in X$. Moreover, the canonical map 
\begin{equation}\label{E:CatCenterNcns}
\Gamma(X, 
\kZ) \stackrel{\upsilon_{\bbX}}\lar 
Z\bigl(\Qcoh(\XX)\bigr)
\end{equation} is a ring  isomorphism.
\end{proposition}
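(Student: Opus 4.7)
The plan has three parts: (i) show $\kZ$ is a subsheaf of $\kA$, (ii) identify $\kZ$ locally with a coherent sheaf and compute its stalks, and (iii) prove that $\upsilon_\bbX$ is a ring isomorphism. First, $\kZ$ is a subsheaf of $\kA$ because the defining centrality condition is manifestly local, hence stable under both restriction and gluing. Next, fix an affine open $U = \Spec(R) \subseteq X$ with $A := A(U)$. I claim $\kZ|_U \cong \widetilde{Z(A)}$: on a basic open $D(f)$, any section of $\kZ$ is in particular central in $A_f$, and conversely any element of $Z(A_f)$ commutes with all sections of $\kA$ over any open $V \subseteq D(f)$, since such sections are locally further localizations of $A_f$. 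Combined with Lemma \ref{L:CenterofRing}, this yields $\Gamma(D(f), \kZ) = Z(A)_f$. Since $R$ is noetherian and $Z(A) \subseteq A$ is an $R$-submodule of a finite $R$-module, $Z(A)$ is finitely generated, so $\kZ$ is coherent, and the stalk formula $\kZ_x = Z(A_x)$ then follows from the second part of Lemma \ref{L:CenterofRing}.

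For the ring isomorphism $\upsilon_\bbX$, the strategy is to reduce to the affine case and glue. On any affine $\bbU = (U, \kA|_U)$ the global sections functor is an equivalence $\Qcoh(\bbU) \simeq A(U)-\Mod$, under which $\upsilon_\bbU$ becomes precisely the isomorphism (\ref{E:CenterCenter}); Proposition \ref{P:Centers} guarantees that an equivalence induces an isomorphism of categorical centers. To globalize, fix a finite affine open cover $X = U_1 \cup \dots \cup U_n$; by separatedness of $X$, every intersection $U_{i_1} \cap \dots \cap U_{i_k}$ is also affine. Each restriction functor $\res_i \colon \Qcoh(\bbX) \to \Qcoh(\bbU_i)$ is a Serre localization whose kernel is the full subcategory of quasi--coherent sheaves supported on $X \setminus U_i$, so Proposition \ref{P:Centers} applies and produces ring homomorphisms $(\res_i)_c \colon Z(\Qcoh(\bbX)) \to Z(\Qcoh(\bbU_i))$, compatible with further restriction to the double intersections $U_{ij}$.

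Iterating the Gabriel recollement from the discussion after Definition \ref{D:NCNS} and applying Lemma \ref{P:CentersRecollement} at each stage presents $Z(\Qcoh(\bbX))$ as the equalizer of $\prod_i Z(\Qcoh(\bbU_i)) \rightrightarrows \prod_{i < j} Z(\Qcoh(\bbU_{ij}))$. On the sheaf side, the sheaf axiom describes $\Gamma(X, \kZ)$ as the parallel equalizer of $\prod_i \Gamma(U_i, \kZ) \rightrightarrows \prod_{i < j} \Gamma(U_{ij}, \kZ)$. The local isomorphisms $\upsilon_{\bbU_i}$ and $\upsilon_{\bbU_{ij}}$ then glue to yield the desired $\upsilon_\bbX$. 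I expect the main obstacle to be the bookkeeping in this last step: verifying that the affine isomorphisms genuinely commute with both parallel arrows in each equalizer diagram. This amounts to a careful but routine application of the functoriality and uniqueness statements in Proposition \ref{P:Centers} to the restriction maps between affines and their pairwise intersections.
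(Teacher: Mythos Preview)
Your proposal is correct and follows essentially the same strategy as the paper: verify the sheaf axiom for $\kZ$, identify $\Gamma(U,\kZ)=Z(A(U))$ on affines via Lemma~\ref{L:CenterofRing}, handle the affine case of $\upsilon_\bbX$ through the equivalence $\Qcoh(\bbU)\simeq A(U)\text{--}\Mod$, and globalize using Gabriel recollement together with Lemma~\ref{P:CentersRecollement}. The only organizational difference is that the paper runs an induction on the number of affine charts (splitting $X=U\cup V$ with $U$ covered by fewer charts) rather than writing down your single equalizer diagram all at once; both packagings encode the same iterated fibre--product computation of $Z(\Qcoh(\bbX))$.
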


\begin{proof} It is clear that $\kZ_\kA$ is a presheaf of commutative rings, which is a sub--presheaf of $\kA$. We have to check the sheaf property of $\kZ$. Let $U \subseteq X$ be any open subset, $U = \cup_{i \in I} U_i$ an open covering and 
$$
\bigl(\alpha_i \in \Gamma\bigl(U_i, \kZ\bigr)\bigr)_{i \in I} \;
\mbox{\rm be such that}\; \alpha_k\big|_{U_k \cap U_l} =  \alpha_l\big|_{U_k \cap U_l} \; \mbox{\rm for all}\; k, l \in I.
$$
Then there exists a unique section $\alpha \in \Gamma(U, \kA)$ such that 
$\alpha\big|_{U_i} = \alpha_i$ for all $i \in I$. We have to show that
$\alpha\big|_{V} \in Z\bigl(\Gamma(V, \kA)\bigr)$ for any open subset $V \subseteq U$. Consider any  $\beta \in \Gamma(V, \kA)$. We have to prove that
$\bigl[\alpha\big|_{V}, \beta\bigr] = 0$. Indeed, we have:
$V = \cup_{i \in I} V_i$, where $V_i = V \cap U_i$. Since 
$\alpha\big|_{V_i} = \alpha_i\big|_{V_i}$ and $\alpha_i \in \Gamma\bigl(U_i, \kZ_\kA\bigr)$, we conclude that $\alpha\big|_{V_i} \in Z\bigl(\Gamma(V_i, \kA)\bigr)$. It implies that
$
\bigl[\alpha\big|_V, \beta\bigr]\Big|_{V_i} = 0
$
for all $i \in I$.  Hence, $\bigl[\alpha\big|_V, \beta\bigr] = 0$.

\smallskip
\noindent
Let  $\alpha \in \Gamma(X, \kZ)$. Then for any 
$\kF \in \Ob\bigl(\Qcoh(\bbX)\bigr)$, we have an endomorphism $\alpha_\kF \in 
\End_{\bbX}(\kF)$ given for any open subset $U \subseteq X$ by the rule
$$
\Gamma(U, \kF) \stackrel{\alpha^{U}_\kF}\lar \Gamma(U, \kF), \; f \mapsto \alpha\big|_{U} \cdot \varphi.
$$
It is clear that the collection of endomorphisms $(\alpha_\kF)$ defines an element
of $Z\bigl(\Qcoh(\XX)\bigr)$, which we denote by $\upsilon_{\bbX}(\alpha)$ (it is how the canonical map $\upsilon_{\bbX}$ from (\ref{E:CatCenterNcns}) is actually defined). If $\alpha \ne 0$ then $\alpha_\kA \ne 0$, too. Hence, the map $\upsilon_{\bbX}$ is at least injective.

\smallskip
\noindent
To show the surjectivity of $\upsilon_{\bbX}$, assume first that $X$ is affine. Let  $A := \Gamma(X, \kA)$, then the functor of global sections 
$
\Qcoh(\bbX) \stackrel{\Gamma}\lar A-\Mod
$
is an equivalence of categories and the induced map of centers 
$Z(A) \lar Z\bigl(\Qcoh(\bbX)\bigr)$ is an isomorphism. 
In the same way as above one can show that 
 $\alpha\big|_{V} \in 
Z\bigl(\Gamma(V, \kA)\bigr)$ for any open subset $V \subseteq X$ and $\alpha 
\in Z(A)$.

\smallskip
\noindent
Next, note  that we have a  sheaf isomorphism $\kZ\big|_{U}
\cong \kZ_{\kA|_{U}}$ for any open subset $U \subseteq X$. If $U$ is moreover affine, 
it follows   that $\Gamma(U, \kZ) = 
Z\bigl(\Gamma(U, \kA)\bigr)$.

\smallskip
\noindent
Now, we  prove by induction on the minimal  number of affine open charts of an affine open covering of $X$  that $\upsilon_{\bbX}$ is an isomorphism. The case  of an affine scheme $X$ is already established. Assume that this statement is true  for any nccs, which can be covered by $n$ affine charts. Suppose that we have an affine open covering 
$X = U_1 \cup \dots \cup U_n \cup U_{n+1}$. Let $U = U_1 \cup \dots \cup U_n$ and 
$V = U_{n+1}$. Since $X$ is separated,  $(U_1 \cap V) \cup \dots \cup (U_n \cap V)$ is  an affine open covering of $W:= U \cap V$  and the category 
$\Qcoh(\bbX)$ is equivalent to Gabriel's recollement with respect to the diagram
$$
\Qcoh(\bbU) \stackrel{\left(\varrho^{U}_W\right)^\ast}\lar \Qcoh(\bbW) 
\stackrel{\left(\varrho^{V}_W\right)^\ast}\longleftarrow \Qcoh(\bbV).
$$
Let $R := Z\bigl(\Qcoh(\bbU)\bigr)$, $S := Z\bigl(\Qcoh(\bbV)\bigr)$,
$C := Z\bigl(\Qcoh(\bbX)\bigr)$ and $T = Z\bigl(\Qcoh(\bbW)\bigr)$. Then Lemma 
\ref{P:CentersRecollement} implies that 
$C \cong R \times_T S$.

\smallskip
\noindent
On the other hand, we have a commutative diagram of rings and ring homomorphisms
$$
\xymatrix{
\Gamma(U, \kZ)  \ar[r] \ar[d]_-{\upsilon_{\bbU}} & \Gamma(W, \kZ) 
\ar[d]_-{\upsilon_{\bbW}} & \ar[l] \Gamma(V, \kZ) \ar[d]^-{\upsilon_{\bbV}} \\
R \ar[r] & T & \ar[l] S
}
$$
in which all vertical maps are isomorphisms due  to the hypothesis of induction. 
The sheaf property of $\kZ$ implies that the map $\upsilon_{\bbX}$ is surjective, hence bijective. 

\smallskip
\noindent
The fact that the sheaf $\kZ$ is coherent and  that we have isomorphism $
\kZ_x \cong Z(A_x)
$ for any $x \in X$ are now easy consequences of Lemma \ref{L:CenterofRing}.
\end{proof}

\begin{corollary}\label{C:Centers}
Let $\bbX = (X, \kA)$ be a ncns and $V \subseteq U \subseteq X$ be open subsets. Then the following diagram of rings and ring homomorphisms 
\begin{equation}\label{E:ReconstrCenter}
\begin{array}{c}
\xymatrix{
\Gamma(U, \kZ) \ar[rr]^{\upsilon_\bbU} \ar[d]_-{\varrho^U_V}& & 
Z\bigl(\Qcoh(\bbU)\bigr) \ar[d]^-{\varphi^U_V}\\
\Gamma(V, \kZ) \ar[rr]^{\upsilon_\bbV} & & Z\bigl(\Qcoh(\bbV)\bigr)
}
\end{array}
\end{equation}
is commutative, where $\varphi^U_V$ is the morphism of centers induced by the localization
functor $\Qcoh(\bbU) \rightarrow  \Qcoh(\bbV)$. Moreover, the horizontal maps in (\ref{E:ReconstrCenter}) are isomorphisms.
\end{corollary}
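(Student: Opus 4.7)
The horizontal maps in (\ref{E:ReconstrCenter}) are isomorphisms by direct application of Proposition \ref{P:CentralSheaf} to the non--commutative noetherian schemes $\bbU$ and $\bbV$, so the substantive task is the commutativity of the square. My plan is to deduce it from Proposition \ref{P:Centers}, applied to the restriction functor $\Qcoh(\bbU) \stackrel{(-)|_V}\lar \Qcoh(\bbV)$.

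First I would verify that $(-)|_V$ satisfies the hypotheses of Proposition \ref{P:Centers}. It is a Serre quotient functor, with kernel the Serre subcategory of quasi--coherent sheaves set--theoretically supported in $U \setminus V$; moreover it is essentially surjective, because on the noetherian scheme $\bbU$ every quasi--coherent sheaf on the open subscheme $\bbV$ extends to a quasi--coherent sheaf on $\bbU$. In particular, the second bullet of Proposition \ref{P:Centers} is satisfied and the induced map on centres is exactly the prescribed $\varphi^U_V = \bigl((-)|_V\bigr)_c$. By the explicit description in that proposition, for any $\eta \in Z(\Qcoh(\bbU))$ and any $\kG \cong \kF|_V$ (with $\kF \in \Ob(\Qcoh(\bbU))$) the component $\varphi^U_V(\eta)_\kG$ is obtained from $\eta_\kF$ by restricting along $V \subseteq U$.

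The remaining step is an unwinding of the definition of $\upsilon$ given in the proof of Proposition \ref{P:CentralSheaf}. For $\alpha \in \Gamma(U, \kZ)$, the endomorphism $\upsilon_\bbU(\alpha)$ acts on every $\kF \in \Ob(\Qcoh(\bbU))$ by multiplication with $\alpha|_W$ on the local sections $\Gamma(W, \kF)$, for each open $W \subseteq U$. Restricting the index range of $W$ to opens contained in $V$ yields multiplication by $(\alpha|_V)|_W$ on sections of $\kF|_V$, which is exactly the prescription for $\upsilon_\bbV\bigl(\varrho^U_V(\alpha)\bigr)$ evaluated at $\kF|_V$. Since every object of $\Qcoh(\bbV)$ is of the form $\kF|_V$ up to isomorphism, this identifies $\varphi^U_V \circ \upsilon_\bbU$ with $\upsilon_\bbV \circ \varrho^U_V$ on all of $\Gamma(U, \kZ)$.

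I do not expect any serious obstacle in the argument. The only step that is not entirely formal is the essential surjectivity of the restriction functor, which however is a standard consequence of the noetherian hypothesis built into the definition of a ncns; the rest is a direct tracing of how the central sections act on local sections of quasi--coherent sheaves.
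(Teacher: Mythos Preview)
Your argument is correct and is precisely the natural way to justify this corollary; the paper itself states it without proof, treating it as immediate from Proposition \ref{P:CentralSheaf} together with the functoriality recorded in Proposition \ref{P:Centers}. One small remark: for essential surjectivity of $(-)|_V$ you can simply cite that $\imath^\ast \imath_\ast \cong \mathsf{Id}_{\Qcoh(\bbV)}$ for the open immersion $\imath: V \hookrightarrow U$, which the paper uses elsewhere (e.g.\ in the proof of Theorem \ref{T:MoritaTheoremNCNS}), rather than appealing to extension of quasi--coherent sheaves.
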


\begin{remark}\label{R:CenterSheaf} Let $\bbX = (X, \kA)$ be a ncns.  For any open subset 
$V \subseteq X$, consider the  map
$$
\Gamma(V, \kZ) \lar 
\End_{A(V)^e}\bigl(A(V)\bigr), \; \alpha \mapsto \bigl(\beta \mapsto \alpha \cdot \beta \bigr).
$$
These maps  define a morphism of sheaves of $\kO$--algebras $\kZ \rightarrow \mathit{End}_{\kA^e}(\kA)$, 
where $\kA^e := \kA \otimes_\kO  \kA^\circ$. If $\bbX$ is noetherian then 
it  is an isomorphism.  However, we do not know whether the equality 
$\Gamma(U, \kZ) = 
Z\bigl(A(U)\bigr)$ is true  for an \emph{arbitrary} open subset $U \subseteq X$. Nevertheless, Proposition \ref{P:CentralSheaf} implies that $\bbX$ is central if and only if the canonical morphism $\kO \rightarrow \kZ$ is an isomorphism.
\end{remark}

\begin{remark}\label{R:RemarkCentrality} Let $\bbX = (X, \kA)$ be a ncns. 
Since $\kZ$ is a coherent sheaf of commutative $\kO$--algebras, there exists 
a commutative noetherian scheme $\widetilde{X} = \Spec(\kZ)$ over $X$; see 
\cite[Proposition 1.3.1]{EGAII}. Let $\widetilde{X} \stackrel{\phi}\lar X$ be the corresponding structure  morphism and $\widetilde\kA : = \phi^{-1} \kA$. Then the non--commutative scheme $\widetilde{\bbX} = (\widetilde{X}, \widetilde\kA)$ is central. Moreover, the functor
$
\Qcoh(\widetilde\bbX) \stackrel{\phi_*}\lar \Qcoh(\bbX)
$
is an equivalence of categories. Thus, we get the following important conclusion:
any ncns $\bbX$ can be  replaced  by a \emph{Morita equivalent central} ncns $\widetilde{\bbX}$. 
\end{remark}

\section{Indecomposable injective quasi--coherent sheaves on non--commutative noetherian schemes}

\noindent
The goal of  this section is to clarify  the structure of indecomposable injective objects of  the category $\Qcoh(\bbX)$, where  $\bbX$ is  a ncns. 

\subsection{Prime ideals in non--commutative rings}
Let $A$ be any ring. Unless explicitly stated otherwise, by  an ideal in $A$ we always mean a two--sided ideal. 

\smallskip
\noindent
Recall that an  ideal  $P$ in $A$ is \emph{prime} if for any ideals $I, J$ in $A$ such that 
$I J \subseteq P$ holds: $I \subseteq P$ or  $J \subseteq P$.  Equivalently, for any 
$a, b \in A$ such that $a A b \subseteq I$ we have: $a\in I$ or $b \in I$. We refer to \cite[Proposition 10.2]{LamFirstCourse} for other characterizations of prime ideals in non--commutative rings. Note that any maximal ideal is automatically prime.

\smallskip
\noindent
Similarly to the commutative case, $\Max(A)$ (respectively, 
$\Spec(A)$) denotes the set of maximal (respectively, prime)  ideals in $A$.

\begin{lemma}\label{L:PrimeIdealsSmallStatement} Let $P \in \Spec(A)$ and $I$ be an ideal in $A$ such that $I \not\subseteq P$. Then for any $a \in A \setminus P$ there exists $b \in I$ such that $ba \notin P$.
\end{lemma}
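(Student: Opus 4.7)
The plan is to argue by contradiction, reducing the statement to an application of the ideal--theoretic characterization of primality: $I_1 I_2 \subseteq P$ implies $I_1 \subseteq P$ or $I_2 \subseteq P$ for two--sided ideals $I_1, I_2$.

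Suppose, for contradiction, that $ba \in P$ for every $b \in I$, i.e.\ $Ia \subseteq P$. The strategy is to replace the ``one--sided'' element $a$ by the two--sided ideal $J := AaA$ generated by $a$, and to show that $IJ \subseteq P$ still holds, which will force primality to give a contradiction.

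The key observation is that $P$ is a two--sided ideal and $I$ is a right ideal in $A$. Therefore, from $Ia \subseteq P$ we obtain
\[
IJ \;=\; I \cdot (AaA) \;=\; (IA) \cdot a \cdot A \;\subseteq\; I \cdot a \cdot A \;=\; (Ia) \cdot A \;\subseteq\; P \cdot A \;\subseteq\; P,
\]
where the inclusion $IA \subseteq I$ uses that $I$ is a right ideal. Since $P$ is prime and $IJ \subseteq P$, we conclude that either $I \subseteq P$ or $J \subseteq P$. The first option is ruled out by the hypothesis $I \not\subseteq P$. The second option gives $a = 1 \cdot a \cdot 1 \in AaA = J \subseteq P$, contradicting $a \in A \setminus P$. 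Hence there must exist some $b \in I$ with $ba \notin P$.

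The only point that requires any care is the bookkeeping of left versus right multiplications in the displayed chain of inclusions; apart from that, the argument is essentially a direct invocation of the definition of a prime ideal. No genuine obstacle is expected.
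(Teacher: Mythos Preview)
Your proof is correct and is essentially the same argument as the paper's: both reduce to showing that $I\cdot(AaA)=IaA$ cannot lie in $P$ by primality, with the paper phrasing this directly (concluding $Ia\not\subseteq P$) and you phrasing it as a contradiction from the assumption $Ia\subseteq P$.
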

\begin{proof}
Since $I \not\subseteq P$ and $AaA \not\subseteq P$, we conclude that $IaA \not\subseteq P$, hence  $Ia \not\subseteq P$. Therefore, there exists $b \in I$ such that $ba \notin P$.
\end{proof}

\begin{lemma}\label{L:EssExtJacobRadical}
Let $P_1, \dots, P_n \in \Spec(A)$ be such that $P_i \not\subseteq P_j$ for all
$1 \le i \ne j \le n$ and $P := P_1 \cap \dots \cap P_n$. Then the canonical  ring homomorphism
$
A/P \stackrel{\jmath}\lar A/P_1 \times \dots \times A/P_n
$
is an essential extension of $A$--modules. 
\end{lemma}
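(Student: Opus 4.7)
The plan is to first note that $\jmath$ is injective (since its kernel is $P_1 \cap \dots \cap P_n / P = 0$ by definition of $P$), and then prove essentiality by showing that for any nonzero element $x$ of the product $A/P_1 \times \dots \times A/P_n$ there exists $b \in A$ with $0 \neq bx \in \jmath(A/P)$.

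Concretely, I would write $x = (a_1 + P_1, \dots, a_n + P_n)$ and fix an index $k$ with $a_k \notin P_k$. The goal is to produce $b \in \bigcap_{i \neq k} P_i$ with $b a_k \notin P_k$; then $bx = (0, \dots, 0, b a_k + P_k, 0, \dots, 0)$ is nonzero, and since $b a_k \in P_i$ for every $i \neq k$, the element $b a_k + P$ is a preimage of $bx$ under $\jmath$. To produce $b$, set $I := P_1 \cdots \widehat{P_k} \cdots P_n$ (the product over $i \neq k$); this ideal lies in $\bigcap_{i \neq k} P_i$, so it suffices to find $b \in I$ with $b a_k \notin P_k$. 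Once $I \not\subseteq P_k$ is established, Lemma \ref{L:PrimeIdealsSmallStatement} applied to the prime $P_k$, the ideal $I$, and the element $a_k \notin P_k$ delivers exactly such a $b$.

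The main step is therefore the purely ring--theoretic claim that $I = \prod_{i \neq k} P_i \not\subseteq P_k$. I would prove this by induction on the number of factors: the base case is the hypothesis $P_i \not\subseteq P_k$ for $i \neq k$, and the inductive step uses primeness of $P_k$, since $P_k$ prime and $P_i (P_{i+1}\cdots) \subseteq P_k$ would force one of the factors into $P_k$, contradicting the induction hypothesis or the assumption on the $P_i$'s. This is the only place where the assumption $P_i \not\subseteq P_j$ (for all $i \neq j$) is genuinely used, and it is the main obstacle in the proof; everything else is a short computation with the explicit form of $\jmath$.
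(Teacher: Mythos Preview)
Your proof is correct and follows essentially the same approach as the paper. The only cosmetic difference is in how the multiplier is produced: the paper applies Lemma~\ref{L:PrimeIdealsSmallStatement} iteratively, choosing $\mu \in P_2$ with $\mu a_1 \notin P_1$, then $\mu' \in P_3$ with $\mu'\mu a_1 \notin P_1$, and so on, thereby building $\lambda \in P_2 \cap \dots \cap P_n$ step by step; you instead first argue that the product ideal $I = \prod_{i \ne k} P_i$ is not contained in $P_k$ (using primeness of $P_k$) and then invoke Lemma~\ref{L:PrimeIdealsSmallStatement} once. Both routes yield the same conclusion with the same ingredients.
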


\begin{proof}
It is sufficient to show that for any 
$0 \ne  x  \in  A/P_1 \times \dots \times A/P_n$, there exists $\lambda \in A$ such that $0 \ne \lambda x \in \mathsf{Im}(\jmath)$. Without loss of generality assume that 
$x = (\bar{a}_1, \dots, \bar{a}_n)$ and 
$a_1 \notin P_1$. Since $P_2 \not\subseteq P_1$, Lemma 
\ref{L:PrimeIdealsSmallStatement} implies that there exists $\mu \in P_2$ such that
$\mu a_1 \notin P_1$. Proceeding inductively, we construct $\lambda \in P_2 \cap \dots \cap P_n$ such that $\lambda a_1 \notin P_1$. Then we get: 
$
\lambda x = (\overline{\lambda a_1}, 0, \dots, 0) \in \mathsf{Im}(\jmath),
$
implying  the statement. 
\end{proof}

\begin{proposition}\label{P:PrimeIdealsBascics} Let $R$ be a commutative noetherian ring and $A$ be a finite $R$--algebra. 
Then the  following statements are true.
\begin{itemize}
\item For any $P \in \Spec(A)$ we have: $P\cap R \in \Spec(R)$.
\item The map $\Spec(A) \stackrel{\varrho}\lar \Spec(R), P \mapsto P\cap R,$ is surjective and has finite fibers. 
\item Let $P \in \Spec(A)$ and $\idp = \varrho(P)$. Then we have: $P_{\idp} \in 
\Max(A_{\idp})$. 
\item Le $\idp, \idq \in \Spec(R)$ and $P \in \Spec(A)$ be such that $\idp \subseteq \idq$ and $\varrho(P) = \idp$. Then there exists $Q \in \Spec(A)$ such that $P \subseteq Q$ and $\varrho(Q) = \idq$. 
\item Let $P, Q \in \Spec(A)$ be such that $P \subseteq Q$ and $\varrho(P) = \varrho(Q)$. Then we have: $P = Q$.
\item Let $\idp \in \Spec(R)$,  $A \stackrel{\jmath}\lar A_{\idp}$  be the canonical ring homomorphism, $Q \in \Max(A_{\idp})$ and $\widetilde{Q} := \jmath^{-1}(Q)$. Then we have: 
$\widetilde{Q} \in \Spec(A)$ and  $\widetilde{Q}_{\idp} = Q$. 
If  $\varrho^{-1}(\idp) = 
\left\{P_1, \dots, P_n\right\}$, where $P_i \ne P_j$ for all $1 \le i \ne j \le n$, then we have: 
$
\Max(A_{\idp}) = \bigl\{(P_1)_{\idp}, \dots,  (P_n)_{\idp}\bigr\}$
 and $\bigl(P_i\bigr)_{\idp} \ne \bigl(P_j\bigr)_{\idp}$ for all $1 \le i \ne j \le n$.
\end{itemize}
\end{proposition}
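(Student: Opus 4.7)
The plan is to reduce every item to the structure of the finite--dimensional $\kappa(\idp)$--algebra $B_\idp := A_\idp/\idp A_\idp$, where $\kappa(\idp) := R_\idp/\idp R_\idp$. By Lemma \ref{L:CenterofRing} we have $Z(A_\idp) \cong R_\idp$, so $A_\idp$ is a finite $R_\idp$--algebra and consequently $B_\idp$ is a finite, hence Artinian, $\kappa(\idp)$--algebra. Central localization at the multiplicative set $R \setminus \idp$ yields the standard bijection
\[
\Spec(A_\idp) \; \longleftrightarrow \; \bigl\{P \in \Spec(A) \bigm| P \cap R \subseteq \idp\bigr\}, \qquad Q \mapsto \jmath^{-1}(Q), \quad P \mapsto P_\idp,
\]
which is the main bridge between $\Spec(A)$ and the Artinian quotient $B_\idp$.

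Statement (1) is immediate: if $a,b \in R$ and $ab \in P$, the centrality of $R$ in $A$ gives $aAb = abA \subseteq P$, so $a \in P$ or $b \in P$. For the surjectivity in (2), Nakayama's lemma applied to the finitely generated $R_\idp$--module $A_\idp$ yields $\idp A_\idp \subsetneq A_\idp$, so $B_\idp$ is a nonzero Artinian ring and admits a maximal ideal; its preimage in $A$ is a prime contracting to $\idp$. Statement (3) follows at once, since the image of $P_\idp$ in the Artinian ring $B_\idp$ is prime, hence maximal, so $P_\idp \in \Max(A_\idp)$.

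For (4), apply (2) to the finite central $(R/\idp)$--algebra $A/P$ with the prime $\idq/\idp \in \Spec(R/\idp)$; the resulting prime lifts to a $Q \in \Spec(A)$ with $P \subseteq Q$ and $\varrho(Q) = \idq$. For (5), if $P \subseteq Q$ both contract to $\idp$, then $P_\idp \subseteq Q_\idp$ are both maximal in $A_\idp$ by (3), hence equal, and the localization bijection forces $P = Q$. Statement (6), and with it the finite fibres in (2), is obtained by restricting the localization bijection to $\Max(A_\idp)$: by (3) every $P \in \varrho^{-1}(\idp)$ maps into $\Max(A_\idp)$, and conversely (4)+(5) show that every maximal ideal of $A_\idp$ arises this way and equals $(P_i)_\idp$ for a unique index $i$; finiteness follows from $|\Max(B_\idp)| < \infty$.

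The main obstacle is (2): once Nakayama is coupled with Lemma \ref{L:CenterofRing} to identify the centre of $A_\idp$, the non--commutative problem is traded for the well--understood analysis of the finite--dimensional algebra $B_\idp$, and the remaining items then reduce to formal manipulations with the localization correspondence and with the quotient algebras $A/P$.
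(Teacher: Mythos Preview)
The paper does not give a detailed proof of this proposition; it only refers to \cite[Section V.6]{Gabriel} and remarks that the arguments are analogous to the commutative case. Your approach --- reducing everything to the Artinian fibre algebra $B_\idp = A_\idp/\idp A_\idp$ via the standard bijection between primes of $A_\idp$ and primes of $A$ disjoint from $R\setminus\idp$ --- is exactly the expected one, and the logical organization (prove surjectivity in~(2) first, defer the finiteness of fibres until~(6)) is correct.

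There is, however, a recurring slip concerning centrality. Lemma~\ref{L:CenterofRing} gives $Z(A_\idp) \cong Z(A)_\idp$, not $Z(A_\idp) \cong R_\idp$; the latter would require $A$ to be a \emph{central} $R$--algebra, which the proposition does not assume. Likewise, in your reduction for~(4) the quotient $A/P$ is a finite $(R/\idp)$--algebra but need not be \emph{central} over $R/\idp$. Fortunately neither claim is actually used: that $A_\idp$ is a finite $R_\idp$--algebra (hence $B_\idp$ is finite--dimensional over $\kappa(\idp)$), and that $A/P$ is a finite $(R/\idp)$--algebra, follow immediately because localization and passage to quotients preserve both module--finiteness and the inclusion $R \hookrightarrow Z(A)$. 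Delete the two centrality assertions and the argument goes through unchanged.
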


\smallskip
\noindent
Proofs of all these  results are analogous to the commutative case; see  \cite[Section V.6]{Gabriel}.

\begin{lemma}\label{L:JacobsonRadical}
Let $(R, \idm)$ be a local commutative noetherian ring, $A$ be a finite $R$--algebra, 
$J$ be its Jacobson radical and $\Max(A) = \left\{P_1, \dots, P_n\right\}$. Then we have: $J = P_1 \cap \dots \cap P_n$. 
\end{lemma}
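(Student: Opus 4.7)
The plan is to reduce to the classical fact that the Jacobson radical of a finite--dimensional algebra over a field coincides with the intersection of its maximal two--sided ideals, and this reduction will be carried out by factoring out $\idm A$.

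First I would show that $\idm A \subseteq J$ and $\idm A \subseteq P_i$ for every $i$. Both inclusions rest on Nakayama's lemma, which applies because $A$ is a finite $R$--module. For the inclusion $\idm A \subseteq J$: if $M$ is any maximal left ideal of $A$, then $A/M$ is a simple left $A$--module, and in particular a nonzero finite $R$--module. Nakayama forbids $\idm(A/M)=A/M$, and since $\idm(A/M)$ is a left $A$--submodule of the simple module $A/M$, it must vanish, so $\idm A \subseteq M$. Intersecting over all maximal left ideals gives $\idm A \subseteq J$. For the inclusion $\idm A \subseteq P_i$: the quotient $A/P_i$ is a nonzero finite $R$--module which is simple as a ring, so the two--sided ideal $\idm(A/P_i)$ is either zero or the whole thing; by Nakayama it cannot be the whole thing.

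Since both $J$ and $P_1 \cap \dots \cap P_n$ contain $\idm A$, it suffices to establish the equality modulo $\idm A$. Set $\bar{A} := A/\idm A$. This is a finitely generated module over the field $R/\idm$, hence a finite--dimensional $R/\idm$--algebra, in particular artinian. By the canonical correspondence, maximal two--sided ideals of $\bar A$ are exactly $\bar{P}_i := P_i/\idm A$ for $i = 1, \dots, n$, and the Jacobson radicals satisfy $\rad(\bar A) = J/\idm A$ (the standard identity $\rad(A)/\idm A = \rad(A/\idm A)$ when $\idm A \subseteq \rad(A)$). So the claim reduces to showing, for the artinian ring $\bar A$, that
\[
\rad(\bar A) \;=\; \bar P_1 \cap \dots \cap \bar P_n.
\]

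For the final step I would invoke Wedderburn--Artin: the quotient $\bar A/\rad(\bar A)$ is semisimple artinian, so it splits as a finite product $\prod_{i=1}^{n} B_i$ of simple artinian rings, and the maximal two--sided ideals of $\bar A/\rad(\bar A)$ are exactly the kernels of the projections $\prod B_j \twoheadrightarrow B_i$. Pulling back, the maximal two--sided ideals of $\bar A$ are precisely $\bar P_1, \dots, \bar P_n$ (this also re--confirms that they are $n$ in number and correspond to the simple factors), and their intersection is $\rad(\bar A)$. Lifting back to $A$ yields $J = P_1 \cap \dots \cap P_n$. The only step requiring any care is the Nakayama argument at the start; once $\idm A$ is known to lie inside both sides, the rest is the usual structure theory of finite--dimensional algebras.
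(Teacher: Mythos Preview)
Your proof is correct, but it follows a different route from the paper's. The paper argues directly at the level of $A$: it uses the characterization $J = \bigcap_U \ann_A(U)$ over simple left $A$--modules, observes that each $\ann_A(U)$ is prime and contains $\idm$ (the latter via $\idm \subseteq J$, cited from Curtis--Reiner), and then invokes Proposition~\ref{P:PrimeIdealsBascics} to conclude that each $\ann_A(U)$ is maximal; conversely every maximal ideal is primitive. You instead factor out $\idm A$ and reduce to the Wedderburn--Artin structure theory of the finite--dimensional algebra $\bar A = A/\idm A$. Both arguments hinge on the same Nakayama step ($\idm A \subseteq J$), which you prove by hand while the paper cites it. Your approach has the virtue of being self--contained and not relying on the earlier Proposition~\ref{P:PrimeIdealsBascics} about the fibres of $\Spec(A) \to \Spec(R)$; the paper's approach, on the other hand, makes the connection to primitive ideals explicit, which fits naturally with the surrounding material on indecomposable injectives.
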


\begin{proof} Recall that $J = \bigcap_{U} \ann_{A}(U)$, where the intersection is taken over the annihilators of all simple left $A$--modules; see 
\cite[Proposition 5.13]{CurtisReiner}. Note that any such $\ann_{A}(U)$ is a prime ideal; see \cite[Proposition 3.15]{GoodearlWarfield}. On the other hand, 
$\idm \subseteq J$; see \cite[Proposition 5.22]{CurtisReiner}; hence 
$\idm \subseteq \ann_{A}(U)$.  By Proposition \ref{P:PrimeIdealsBascics}, $\ann_{A}(U)$
is a maximal ideal in $A$. Conversely, for any  $P \in \Max(A)$ there exists
a simple left $A$--module $U$ such that $P = \ann_{A}(U)$; see \cite[Proposition 3.15]{GoodearlWarfield}. This implies the statement.
\end{proof}

\begin{proposition}\label{P:EndomInjHull}
In the notation of Lemma \ref{L:JacobsonRadical}, let $E_i = E_A(A/P_i)$ be the injective envelope
of $A/P_i$ for all $1 \le i \le n$. Then we have:
$
\End_A(E_1 \oplus \dots \oplus E_n) \cong \widehat{A}^\circ,
$
where $\widehat{A}$ is the $\idm$--adic completion of the algebra $A$. 
\end{proposition}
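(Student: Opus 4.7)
Plan: Let $E := E_1 \oplus \dots \oplus E_n$ and set $E^{(k)} := \{e \in E \;|\; J^k e = 0\}$. By Lemma~\ref{L:EssExtJacobRadical} combined with additivity of injective envelopes, $E$ is the injective envelope of the left $A$-module $A/J$. Writing each simple factor as $A/P_i \cong U_i^{\oplus d_i}$ with $U_i$ the (up to isomorphism) unique simple $A/P_i$-module, one obtains $E = \bigoplus_i E_A(U_i)^{\oplus d_i}$, and the integer $d_i$ also equals the multiplicity of $U_i$ in the semisimple module $A/J$. My strategy has three steps: (a) show $E = \bigcup_k E^{(k)}$; (b) identify $\End_{A/J^k}(E^{(k)}) \cong (A/J^k)^\circ$ for every $k$; (c) check that the $J$-adic and $\idm$-adic topologies on $A$ coincide, so that $\varprojlim_k A/J^k \cong \widehat{A}$.

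The main technical obstacle is step (a). My idea is to exploit the commutative Matlis module $E_R := E_R(R/\idm)$, which is $\idm$-torsion by the classical commutative theory. Form the left $A$-module $M := \Hom_R(A, E_R)$; this is well-defined since $R \subseteq Z(A)$, and the adjunction $\Hom_A(-, M) \cong \Hom_R(-, E_R)$ together with injectivity of $E_R$ shows $M$ is injective over $A$. Moreover $M$ is $\idm$-torsion because $A$ is a finitely generated $R$-module. For each $i$ the group $\Hom_A(U_i, M) \cong \Hom_R(U_i, E_R)$ is nonzero (any nonzero $R$-linear map $U_i \twoheadrightarrow R/\idm \hookrightarrow E_R$ provides an element), so by simplicity $U_i \hookrightarrow M$, and hence $E_A(U_i)$ is a direct summand of the injective module $M$; therefore each $E_A(U_i)$, and consequently $E$, is $\idm$-torsion. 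Step (c) follows by observing that $J/\idm A$ is the Jacobson radical of the finite-dimensional $R/\idm$-algebra $A/\idm A$ and is hence nilpotent, so $J^l \subseteq \idm A$ for some $l$; combined with $\idm A \subseteq J$ this proves equivalence of the two adic topologies, and in particular $\bigcup_k E[\idm^k] = \bigcup_k E^{(k)}$.

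Any $A$-linear endomorphism of $E$ preserves each $E^{(k)}$ (since $J^k \phi(e) = \phi(J^k e) = 0$), so $\End_A(E) \cong \varprojlim_k \End_{A/J^k}(E^{(k)})$. For step (b), set $B_k := A/J^k$. This is an artinian ring, because both $A/J$ and every $J^i/J^{i+1}$ have finite length as $A/J$-modules. The module $E^{(k)} = \Hom_A(B_k, E) = \bigoplus_i E_{B_k}(U_i)^{\oplus d_i}$ is $B_k$-injective (as the right adjoint of the exact inclusion of $B_k$-modules into $A$-modules preserves injectives) and is the $B_k$-injective envelope of $B_k/J(B_k) = A/J$; crucially, its multiplicities $d_i$ match those of the decomposition $B_k = \bigoplus_i P_i^{\oplus d_i}$ of the regular representation into projective indecomposables. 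By classical Matlis--Morita duality for artinian rings (which applies since $B_k$ is a finite algebra over the commutative artinian local ring $R/\idm^j$ for any $j$ with $\idm^j A \subseteq J^k$), one obtains $\End_{B_k}(E^{(k)}) \cong B_k^\circ$, with transition morphisms in the inverse system corresponding to the canonical projections $A/J^{k+1} \to A/J^k$. Combining all three steps,
\[
\End_A(E) \;\cong\; \varprojlim_k (A/J^k)^\circ \;\cong\; \widehat{A}^\circ,
\]
as claimed.
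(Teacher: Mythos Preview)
Your proof is correct and takes a genuinely different route from the paper's. The paper argues in one stroke via Matlis duality over the \emph{complete} ring: it cites \cite[Corollary 4.3]{Matlis} for the anti--equivalence $\DD$ between noetherian right $\widehat{A}$--modules and artinian left $\widehat{A}$--modules, observes that $\DD$ sends the projective cover $\widehat{A}_{\widehat{A}}$ of $T = A/J$ to the injective envelope $E$ of ${}_{\widehat{A}}T$, and concludes $\End_A(E) \cong \End_{\widehat{A}}(E) \cong \End_{\widehat{A}}\bigl(\widehat{A}_{\widehat{A}}\bigr)^\circ \cong \widehat{A}^\circ$ (the identification $\End_A(E) \cong \End_{\widehat{A}}(E)$ being justified by a reference to \cite{Matsumura}). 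You instead work at finite levels: you first establish that $E$ is $\idm$--torsion by realising each $E_A(U_i)$ as a summand of the concrete injective $\Hom_R(A, E_R)$, then apply the elementary duality $\Hom_{\bar R}(-, E_R[\idm^j])$ over each artinian quotient $B_k = A/J^k$ to get $\End_{B_k}(E^{(k)}) \cong B_k^\circ$, and finally pass to the inverse limit. Your approach is longer but more self--contained: it avoids citing a non--commutative Matlis duality over complete rings as a black box and in effect rederives the needed special case from the artinian duality. The paper's approach is terser and makes the passage to the completion conceptually transparent (via $\End_A(E) = \End_{\widehat{A}}(E)$), at the cost of relying on an external reference whose non--commutative generalisation the reader must accept.
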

\begin{proof} Let $E$ be the injective envelope of the left $A$--module $T:= A/J$. Lemma \ref{L:EssExtJacobRadical} implies that $E \cong E_1 \oplus \dots \oplus E_n$. 
Let $\widehat{J}$ be the Jacobson radical of $\widehat{A}$. Then we have:
$\widehat{J} = J \widehat{A}$ and $A/J \cong \widehat{A}/\widehat{J}$. 
The Matlis Duality functor  $\DD$ (see \cite[Corollary 4.3]{Matlis})
establishes an anti--equivalence between the categories of noetherian right $\widehat{A}$--modules and artinian left $\widehat{A}$--modules. Since $T$ 
is semi--simple and of finite length, we have: $\DD(T_{\widehat{A}}) \cong {}_{\widehat{A}}T$. Moreover, 
$\DD$ maps the projective cover of $T$ (which is just $\widehat{A}_{\widehat{A}}$) to 
the injective envelope of  $T$. However, the injective envelope of $T$, viewed as
a left $\widehat{A}$--module,  can be identified with $E$ and 
$\End_{A}(E) \cong \End_{\widehat{A}}(E)$; see e.g. \cite[Theorem 18.6]{Matsumura}
(the proof of \cite{Matsumura} can be literally generalized to the non--commutative setting). Since 
$\End_{\widehat{A}}\bigl(\widehat{A}_{\widehat{A}}\bigr) \cong\widehat{A}$, we conclude that $\End_{A}(E) \cong \widehat{A}^\circ$.
\end{proof}

\subsection{Prime ideals and indecomposable injective modules} Recall the following standard results about indecomposable injective modules. 

\begin{lemma}\label{L:EndomIndInj}
Let $A$ be any ring, $I$ be an injective $A$--module and $H = \End_{A}(I)$. Then the following statements are true.
\begin{itemize}
\item $I$ is indecomposable if and only if $H$ is local. Moreover, in this case 
$f \in H$ is a unit if and only if $\ker(f) = 0$.
\item Assume additionally that $A$ is left noetherian. If $I$ is indecomposable then any $f \in H$ is either a unit or locally nilpotent (i.e.~for any $x \in I$ there exists $n \in \NN$ such that $f^n(x) = 0$).
\end{itemize}
\end{lemma}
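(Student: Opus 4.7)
The plan is to extract from the injectivity of $I$ the fact that every injective submodule is a direct summand, and then combine this with indecomposability and uniformity of $I$.

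For the first bullet, ``local implies indecomposable'' is standard, since a nontrivial idempotent in $H$ would split $I$. For the converse I would first prove the kernel criterion: if $\ker(f) = 0$ then $f$ is a monomorphism, so $f(I) \cong I$ is injective and hence a direct summand of $I$; indecomposability gives $f(I) = I$, so $f$ is a unit. Next I would establish that $I$ is \emph{uniform}: if $I_1, I_2 \subseteq I$ were nonzero with $I_1 \cap I_2 = 0$, a standard argument with essential extensions shows their injective envelopes $E_A(I_1), E_A(I_2) \subseteq I$ still intersect trivially, so $E_A(I_1) \oplus E_A(I_2)$ is an injective submodule of $I$ and hence a direct summand, contradicting indecomposability. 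With uniformity, for any $f \in H$ the submodules $\ker(f)$ and $\ker(1-f)$ intersect in zero (any common $x$ satisfies $f(x) = 0$ and $x = f(x) = 0$), so at least one of them vanishes and the kernel criterion yields that $f$ or $1-f$ is a unit. Hence $H$ is local.

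For the second bullet, assume $A$ is left noetherian, let $f \in H$ be a non-unit, and set $K_n = \ker(f^n)$. By the first part $K_1 \ne 0$, and uniformity makes $K_1$ essential in $I$. Given $x \in I$, the left noetherianity of $A$ makes $Ax$ a noetherian module, so the ascending chain $\bigl(Ax \cap K_n\bigr)_{n \ge 1}$ stabilizes at some index $N$. Setting $y = f^N(x)$, one checks that $f$ restricted to $Ay$ is injective: if $f(ay) = 0$ for some $a \in A$ then $f^{N+1}(ax) = 0$, so $ax \in Ax \cap K_{N+1} = Ax \cap K_N$, which gives $ay = f^N(ax) = 0$. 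Thus $Ay \cap \ker(f) = 0$, and essentiality of $\ker(f)$ forces $Ay = 0$, i.e.\ $f^N(x) = 0$, as required.

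The most delicate step, and the one I expect to require the most care, is this final noetherian argument. It crucially uses left noetherianity of $A$ applied to the cyclic submodule $Ax$ (rather than any finiteness hypothesis on $I$ itself) to stabilize the chain $Ax \cap K_n$; without this reduction, uniformity alone only tells us that $\ker(f)$ is essential in $I$, which does not suffice for local nilpotence.
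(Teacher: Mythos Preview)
Your argument is correct. Both parts are carefully and accurately handled: the uniformity argument for indecomposable injectives is right (including the verification that $E_A(I_1)\cap E_A(I_2)=0$), the criterion ``$f$ or $1-f$ is a unit'' does characterize locality, and your noetherian stabilization trick on $Ax\cap K_n$ cleanly yields local nilpotence.

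The paper does not actually prove this lemma; it only cites Matlis \cite[Proposition~2.6]{Matlis} for the first item and Gabriel \cite[Lemme~2, p.~428]{Gabriel} for the second. Your write-up supplies a self-contained proof where the paper defers to the literature, and the arguments you give are essentially the classical ones behind those citations. There is no genuine divergence in strategy to compare, since the paper offers none of its own.
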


\smallskip
\noindent
\emph{Comment to the proof}. For the first statement, see 
\cite[Proposition 2.6]{Matlis}. For the second result, see \cite[Lemme 2, page 428]{Gabriel}. \qed

\smallskip
\noindent
From now on in this subsection, we assume that $R$ is a commutative noetherian ring and $A$ is a finite $R$--algebra. We denote by $\Sp(A)$ the set of  the isomorphism classes of indecomposable injective $A$--modules. 

\begin{proposition}
For any $P \in \Spec(A)$ there exist  uniquely determined $I_P \in \Sp(A)$ and $m_P \in \NN$ such 
that $E_A(A/P) \cong I_P^{\oplus m_P}$. Moreover, the assignment
$$
\Spec(A) \stackrel{\varepsilon}\lar \Sp(A), \;  P \mapsto I_P
$$
is a bijection. 
\end{proposition}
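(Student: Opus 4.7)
The strategy is to build $I_P$ from the local structure of $A$ at $\idp := P \cap R$ via Artin--Wedderburn, and, conversely, to invert $\varepsilon$ by extracting a cyclic submodule with prime annihilator from any indecomposable injective, exploiting that every indecomposable injective $A$-module is uniform.

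For the construction, fix $P \in \Spec(A)$ and set $\idp := P \cap R$; Proposition \ref{P:PrimeIdealsBascics} yields $\idp \in \Spec(R)$ and $P_\idp \in \Max(A_\idp)$. The quotient $A_\idp/P_\idp$ is a finite-dimensional simple algebra over the residue field $k(\idp)$, so Artin--Wedderburn gives $A_\idp/P_\idp \cong \mathsf{Mat}_{m_P}(D_P)$ for a skew field $D_P$, and as a left module it decomposes as $U_P^{\oplus m_P}$, where $U_P$ denotes the unique simple left $A_\idp/P_\idp$-module. Since $R \setminus \idp$ acts invertibly on $U_P$, this $U_P$ is simple already as an $A$-module. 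Next I would verify that $A/P \hookrightarrow A_\idp/P_\idp$ is an essential extension of left $A$-modules: if $0 \neq \bar{x}/s$ with $x \in A \setminus P$, then $s \cdot (\bar{x}/s) = \bar{x}/1 \neq 0$ in $A_\idp/P_\idp$, for if $tx \in P$ with $t \in R \setminus \idp$ then $(At)(Ax) = A \cdot tx \subseteq P$ would force $t \in P$ or $x \in P$ by primality, contradicting $t \notin \idp$ or $x \notin P$. It follows that
\[
E_A(A/P) \;=\; E_A(A_\idp/P_\idp) \;\cong\; E_A(U_P)^{\oplus m_P},
\]
and I set $I_P := E_A(U_P)$, which is indecomposable because $U_P$ is simple. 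Uniqueness of the pair $(I_P, m_P)$ then follows from Krull--Schmidt for direct sums of indecomposable injectives over the noetherian ring $A$, which applies because $\End_A(I_P)$ is local by Lemma \ref{L:EndomIndInj}.

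For injectivity of $\varepsilon$, I would recover $P$ from $I_P$ via its socle. Since $U_P \subseteq I_P$ is essential (as $I_P = E_A(U_P)$), every simple submodule of $I_P$ must meet $U_P$ and therefore equal $U_P$; hence $\mathrm{soc}(I_P) = U_P$. On the other hand, $U_P$ is a faithful simple $A_\idp/P_\idp$-module, so $\ann_{A_\idp}(U_P) = P_\idp$ and consequently $\ann_A(U_P) = A \cap P_\idp = P$ by Proposition \ref{P:PrimeIdealsBascics}. Thus $I_P \cong I_{P'}$ forces $U_P \cong U_{P'}$ as $A$-modules and hence $P = P'$.

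The main difficulty is surjectivity: given an indecomposable injective $I$, one must produce a prime $Q$ with $I \cong I_Q$. My plan is to extract a cyclic submodule of $I$ with prime two-sided annihilator. Since $A$ is noetherian, the family $\Sigma := \{\ann_A(Ay) \mid 0 \neq y \in I\}$ has a maximal element $Q = \ann_A(Ay_0)$, and a standard argument shows $Q \in \Spec(A)$: if two-sided ideals $\alpha,\beta$ of $A$ satisfy $\alpha\beta \subseteq Q$ with $\beta \not\subseteq Q$, pick $0 \neq w \in \beta A y_0$, observe $\ann_A(Aw) = Q$ by the maximality of $Q$, and deduce $\alpha \subseteq Q$ from $\alpha A w \subseteq \alpha\beta A y_0 = 0$. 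Set $\idq := Q \cap R$. For $r \in R \setminus \idq$ and $0 \neq z \in Ay_0$, centrality of $r$ gives that $rz = 0$ implies $r \in \ann_A(Az) = Q$ (maximality, again), contradicting $r \notin \idq$; hence $R \setminus \idq$ acts injectively on $Ay_0$, and $Ay_0 \hookrightarrow (Ay_0)_\idq$ is an essential $A$-module extension. The localization $(Ay_0)_\idq$ is a nonzero cyclic module over $A_\idq/Q_\idq \cong \mathsf{Mat}_{m_Q}(D_Q)$, hence isomorphic to $U_Q^{\oplus k}$ for some $1 \le k \le m_Q$. Uniformity of $I$ makes $Ay_0 \subseteq I$ essential, so $I = E_A(Ay_0)$; the composition $Ay_0 \hookrightarrow U_Q^{\oplus k} \subseteq I_Q^{\oplus k}$ extends by injectivity of $I_Q^{\oplus k}$ to a map $I \to I_Q^{\oplus k}$, necessarily injective since its kernel meets the essential submodule $Ay_0$ trivially. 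Injectivity of $I$ then realises it as a direct summand of $I_Q^{\oplus k}$, and Krull--Schmidt forces $I \cong I_Q$, so $\varepsilon(Q) = [I]$ and the proof is complete.
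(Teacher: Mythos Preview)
Your argument is correct and complete. The paper itself does not give a proof of this proposition at all: it merely records a ``Comment to the proof'' citing \cite[Section~V.4]{Gabriel} and \cite[Section~3F]{Lam}. What you have written is essentially a self-contained version of the classical associated-prime argument found in those references: localise at $\idp = P\cap R$ to reduce to a simple artinian quotient, use Artin--Wedderburn to produce the simple module $U_P$ and the multiplicity $m_P$, and for the inverse map pick a maximal annihilator $\ann_A(Ay_0)$ inside an indecomposable injective and verify primality. Each of your verifications (essentiality of $A/P \hookrightarrow A_\idp/P_\idp$, recovery of $P$ as $\ann_A(\mathrm{soc}\,I_P)$ via the last two bullets of Proposition~\ref{P:PrimeIdealsBascics}, the primality of the maximal annihilator, and the final Krull--Schmidt step using Lemma~\ref{L:EndomIndInj}) goes through as stated. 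So there is no discrepancy in approach to report---you have supplied the details the paper chose to outsource.
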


\noindent
\emph{Comment to the proof}. This result is proven in \cite[Section V.4]{Gabriel}. In fact, any indecomposable injective $A$--module $I$ has a uniquely determined  associated prime ideal $P$; see also \cite[Section 3F]{Lam} for further details. \qed

\smallskip
\noindent
Composing the inverse of  $\varepsilon$ with the  map $\varrho$ from Proposition \ref{P:PrimeIdealsBascics}, 
we get a map
$$
\Sp(A) \stackrel{\alpha}\lar \Spec(R). 
$$
It turns out, that  $\alpha$ has a clear conceptual meaning: it assigns to an indecomposable injective $A$--module its uniquely determined associated prime ideal in $R$.

\begin{proposition}\label{P:Associator}
Let $I \in \Sp(A)$ and $\idp = \alpha(I)$. For 
 any $r \in R$, let $\lambda_r^I \in \End_A(I)$ be the (left) multiplication map with $r$. Then the following statements are true. 
 \begin{enumerate}
\item If $r \in  \idp$ then  $\lambda^I_r$ is locally nilpotent, i.e. for any $x \in I$ there exists $n \in \NN$ such that $r^n x = 0$. 
\item If $r \in R\setminus \idp$ then  $\lambda_r^I$ is invertible. 
\item $\idp$ is the unique associated prime ideal of $I$ viewed as an $R$--module. 
\item We have: $\mathsf{Supp}(I) = \overline{\left\{\idp\right\}} \subset \Spec(R)$.
 \end{enumerate}
\end{proposition}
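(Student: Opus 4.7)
The plan is to establish (1) and (2) first by combining the structure of the endomorphism ring of an indecomposable injective (Lemma \ref{L:EndomIndInj}) with the essentiality of the embedding $A/P \hookrightarrow E_A(A/P)$ and a primeness argument exploiting the centrality of $R$; parts (3) and (4) will then follow as formal consequences.

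Write $P = \varepsilon^{-1}(I) \in \Spec(A)$, so $\idp = P \cap R$ and $I$ is an indecomposable summand of $E_A(A/P)$. Since $A$ is noetherian (as a finite module over the noetherian ring $R$), Lemma \ref{L:EndomIndInj} gives the dichotomy: for any $r \in R$, the central action $\lambda_r^I \in \End_A(I)$ is either an automorphism or locally nilpotent, and it is an automorphism precisely when $\ker(\lambda_r^I) = 0$. Because $\lambda_r^{E_A(A/P)}$ commutes with the idempotents projecting $E_A(A/P)$ onto its copies of $I$, it acts as $\lambda_r^I$ on every summand. For (1): any $r \in \idp \subseteq P$ annihilates the submodule $A/P$, so $\lambda_r^{E_A(A/P)}$ has nonzero kernel, hence $\lambda_r^I$ is not an automorphism and must be locally nilpotent.

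For (2) I would argue by contradiction. A nonzero kernel of $\lambda_r^I$ produces a nonzero kernel of $\lambda_r^{E_A(A/P)}$, and essentiality forces this kernel to meet $A/P$ nontrivially, yielding $a \in A \setminus P$ with $ra \in P$. Centrality of $R$ gives the key identity $r(xay) = x(ra)y \in P$ for all $x,y \in A$, i.e.~$(Ar)(AaA) \subseteq P$. Primeness of $P$ forces either $r \in Ar \subseteq P$ (so $r \in \idp$, contradicting the hypothesis) or $a \in AaA \subseteq P$ (contradicting the choice of $a$). Hence $\ker(\lambda_r^I) = 0$ and $\lambda_r^I$ is a unit.

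Parts (3) and (4) are now formal. From (1) and (2) one reads off $\sqrt{\ann_R(x)} = \idp$ for every nonzero $x \in I$: elements of $\idp$ eventually annihilate $x$ by local nilpotency, while elements of $R \setminus \idp$ act injectively on all of $I$, so no power can annihilate $x$. Any prime annihilator of an element of $I$ therefore equals its own radical $\idp$, and $I \ne 0$ has at least one associated prime because $R$ is noetherian, proving (3). For (4): if $\idq \supseteq \idp$, then no element of $R \setminus \idq$ lies in $\idp$, so by (2) every such element acts injectively on $I$, hence $I_\idq \ne 0$; if $\idq \not\supseteq \idp$, then any fixed $r \in \idp \setminus \idq$ locally nilpotently annihilates any prescribed $x \in I$ by (1), giving a power $r^n \in R \setminus \idq$ with $r^n x = 0$, so $I_\idq = 0$. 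I expect the main technical hurdle to be the primeness step in (2), where the centrality hypothesis $R \subseteq Z(A)$ is crucial to promote the scalar relation $ra \in P$ to a two-sided ideal inclusion on which primeness of $P$ can act.
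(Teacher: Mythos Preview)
Your proof is correct and follows essentially the same approach as the paper: both arguments pivot on Lemma~\ref{L:EndomIndInj} together with the essentiality of $A/P \hookrightarrow E_A(A/P)$ and primeness of $P$, with parts (3) and (4) derived as formal consequences of (1) and (2). The only cosmetic differences are that the paper phrases (2) directly (``$\lambda_r^{A/P}$ is injective since $P$ is prime'') rather than by contradiction, and for (3) it exhibits an explicit embedding $R/\idp \hookrightarrow I$ instead of computing $\sqrt{\ann_R(x)}$; your primeness step in (2) simply unpacks what the paper leaves implicit.
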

\begin{proof}  Let $P \in \Spec(A)$ be the associated prime ideal of $I$ and
$E$ be the injective envelope of $A/P$. Then there exists $m \in \NN$ such that 
$E \cong I^{\oplus m}$. 
For any $r \in R$, we have a commutative diagram of $A$--modules
$$
\xymatrix{
A/P \ar@{^{(}->}[rr] \ar[d]_-{\lambda_r^{A/P}} & & E \ar[d]^-{\lambda_r^E} \\
A/P \ar@{^{(}->}[rr]  & & E
}
$$
Note that $\lambda_r^E = \mathsf{diag}(\lambda_r^I, \dots, \lambda_r^I)$.

\smallskip
\noindent
(1) If $r\in \idp$ then $\lambda_r^{A/P} = 0$. Hence, $\ker(\lambda_r^E) \ne 0$ and
$\ker(\lambda_r^I) \ne 0$, too. According to Lemma \ref{L:EndomIndInj}, the endomorphism 
$\lambda_r^I$ is locally nilpotent.

\smallskip
\noindent
(2) Let $r\in R\setminus \idp$. Since $P$ is prime, the map $\lambda_r^{A/P}$ is injective. Since the extension $A/P \subset E$ is essential, we have: 
$\ker(\lambda_r^E) =  0$. Hence, $\ker(\lambda_r^I) =  0$ and Lemma \ref{L:EndomIndInj} implies that $\lambda_r^I$ is an isomorphism.

\smallskip
\noindent
(3) We have a non--zero map of $R$--modules $R/\idp \stackrel{\beta}\lar I$, obtained as the composition
$$
R/\idp \xhar A/P \xhar E \dhrightarrow I,
$$
where the last map is an appropriate projection of $E$ onto one of its indecomposable direct summands. It follows from part (2) that $\beta$ is automatically injective, hence
$\idp$ is an associated prime ideal of $R$. 

\smallskip
\noindent
Next, assume that $\idq \ne \idp$ is another associated prime ideal of $I$. Then there exists  an inclusion of $R$--modules $R/\idq \xhar I$. Note that
 for any $r \in R$, the following diagram
$$
\xymatrix{
R/\idq \ar@{^{(}->}[rr] \ar[d]_{\lambda_r^{R/\idq}} & & I \ar[d]^-{\lambda_r^I} \\
R/\idq \ar@{^{(}->}[rr]  & & I 
}
$$
is commutative.  If $r \in \idq \setminus \idp$ then $\lambda_r^{R/\idq} = 0$ and $\lambda_r^I$ is invertible (by part (2)). If $r \in \idp \setminus \idq$ then
$\lambda_r^{R/\idq}$ is injective and $\lambda_r^I$ is locally nilpotent (by part (1)). In both cases, we get a contradiction.

\smallskip
\noindent
(4) The inclusion $\overline{\{\idp\}} \subseteq \mathsf{Supp}(I)$ follows from part (3). If $\idq \in \Spec(R)$ is such that $\idp \not\subseteq \idq$ then there exists
$r \in \idp \setminus \idq$. By part (1), for any $x \in I$ there exists $n \in \NN$ such that $r^n x = 0$. This implies that $I_{\idq} = 0$. \end{proof}

\begin{lemma}\label{L:mapsindinject} Let $I, J \in \Sp(A)$ be such that $\Hom_A(I, J) \ne 0$. Then we have:
$\idp \subseteq \idq$, where $\idp = \alpha(I)$ and $\idq = \alpha(J)$.

\smallskip
\noindent
Conversely, let  $\idp, \idq \in \Spec(R)$ be such that $\idp \subseteq \idq$. Then  there exist $I, J \in \Sp(A)$ such that $\Hom_A(I, J) \ne 0$, $\idp = \alpha(I)$ and
$\idq = \alpha(J)$.
\end{lemma}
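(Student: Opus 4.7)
The plan for the forward implication is to argue by contradiction using the dichotomy supplied by Proposition \ref{P:Associator}. Suppose $\idp \not\subseteq \idq$ and pick $r \in \idp \setminus \idq$. By parts (1) and (2) of that proposition, the left multiplication $\lambda_r^I$ is locally nilpotent, while $\lambda_r^J$ is an isomorphism. For any $f \in \Hom_A(I,J)$ and any $x \in I$, choose $n$ with $r^n x = 0$; then $(\lambda_r^J)^n f(x) = f(r^n x) = 0$, so invertibility of $\lambda_r^J$ forces $f(x)=0$. Hence $f=0$, contradicting $\Hom_A(I,J) \ne 0$.

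For the converse, the strategy is to lift the inclusion $\idp \subseteq \idq$ to a chain $P \subseteq Q$ of prime ideals of $A$ and then use the structure of injective envelopes over finite $R$--algebras to produce a non-zero morphism. Specifically, by surjectivity of $\varrho$ (second bullet of Proposition \ref{P:PrimeIdealsBascics}) we pick any $P \in \varrho^{-1}(\idp)$, and the going-up statement (fourth bullet of the same proposition) gives $Q \in \Spec(A)$ with $P \subseteq Q$ and $\varrho(Q) = \idq$. Set $I := \varepsilon(P) = I_P$ and $J := \varepsilon(Q) = I_Q$; by construction $\alpha(I)=\idp$ and $\alpha(J)=\idq$.

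It remains to exhibit a non-zero element of $\Hom_A(I, J)$. The composition
\[
A/P \twoheadrightarrow A/Q \longhookrightarrow E_A(A/Q) \cong J^{\oplus m_Q}
\]
is a non-zero morphism of left $A$--modules. Since $J^{\oplus m_Q}$ is injective and $A/P \hookrightarrow E_A(A/P) \cong I^{\oplus m_P}$ is an inclusion, this composition extends to a morphism $\psi : I^{\oplus m_P} \to J^{\oplus m_Q}$. Essentiality of the embedding $A/P \hookrightarrow I^{\oplus m_P}$ ensures that $\psi$ remains non-zero (its restriction to $A/P$ is the non-zero map above), and viewing $\psi$ as a matrix of morphisms $I \to J$, at least one entry must be non-zero, providing the required element of $\Hom_A(I,J)$.

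The only delicate point is verifying non-vanishing of $\psi$: this is immediate from the fact that the map $A/P \to J^{\oplus m_Q}$ we extended is itself non-zero, so the extension cannot be trivial. All other ingredients—existence and behavior of the bijection $\varepsilon$, the going-up property, and injectivity of the envelopes—are already at our disposal from the preceding results.
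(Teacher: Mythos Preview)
Your proof is correct and follows essentially the same route as the paper: the forward direction is the identical contradiction via Proposition \ref{P:Associator}, and the converse likewise lifts $\idp \subseteq \idq$ to $P \subseteq Q$ in $\Spec(A)$ via going-up, builds the nonzero map $A/P \twoheadrightarrow A/Q \hookrightarrow E_A(A/Q)$, extends it over $E_A(A/P)$ by injectivity, and reads off a nonzero component $I_P \to I_Q$. Your extra remark about essentiality is unnecessary (nonvanishing of $\psi$ is immediate from its restriction to $A/P$), but harmless.
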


\begin{proof}
Let $I \stackrel{f}\lar J$ be a non--zero homomorphism of $A$--modules and 
$x \in I$ be such that $y := f(x) \ne 0$. Assume that there exists $r \in \idp \setminus \idq$. Then $\lambda_r^I$ is locally nilpotent, so we can find  $n \in \NN$ such that
$r^n x = 0$. Hence, $r^n y = 0$, too. On the other hand, the $\lambda_r^J$ is invertible. Contradiction. 

\smallskip
\noindent
To prove the second part, take any $P, Q \in \Spec(A)$ such that $P \subseteq Q$,
$P \cap R = \idp$ and $Q \cap R = \idq$ (such $P$ and $Q$ exist by Proposition \ref{P:PrimeIdealsBascics}). Then we have a non--zero homomorphism of $A$--modules
$A/P \stackrel{g}\lar E_A(A/Q)$, defined as the composition
$
A/P \dhrightarrow A/Q \xhar E_A(A/Q),
$
where $E_A(A/Q)$ is the injective hull of $A/Q$. By injectivity of $E_A(A/Q)$, there exists  a non--zero morphism $E_A(A/P) \stackrel{\tilde{g}}\lar E_A(A/Q)$ extending $g$.
Since $E_A(A/P) \cong I_P^{\oplus m_P}$ and $E_A(A/Q) \cong I_Q^{\oplus m_Q}$ for some $m_P, m_Q \in \NN$, we conclude that $\Hom_A(I_P, I_Q) \ne 0$. 
\end{proof}

\begin{corollary}
For any $\idp \in \Spec(R)$ we put:
$
I(\idp) := \bigoplus\limits_{\substack{I \in \Sp(A) \\ \alpha(I) = \idp}} I.
$
Then for any $\idp, \idq \in \Spec(R)$ we have:
$
\Hom_A\bigl(I(\idp), I(\idq)\bigr) \ne 0$ if and only if
$\idp \subseteq \idq.
$
\end{corollary}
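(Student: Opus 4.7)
The plan is to deduce both implications directly from Lemma \ref{L:mapsindinject}, using only the universal property of the direct sum to pass between morphisms out of the indecomposable summands and morphisms out of the whole $I(\idp)$.

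For the forward direction, suppose $\Hom_A\bigl(I(\idp), I(\idq)\bigr) \ne 0$. Since a homomorphism out of a coproduct is determined by its restrictions to the summands, and a non-zero such homomorphism must have at least one non-zero component $I \to I(\idq)$ for some indecomposable injective $I$ with $\alpha(I) = \idp$. I would then argue that this component itself has a non-zero projection onto some indecomposable summand $J$ of $I(\idq)$ with $\alpha(J) = \idq$. The cleanest way to see this is to fix $0 \neq x \in I$ with non-zero image $y \in I(\idq)$, and note that $y$ has non-zero component in some direct summand $J$, yielding a non-zero morphism $I \to J$; then Lemma \ref{L:mapsindinject} gives $\idp \subseteq \idq$. (One should mention here that the restriction of a morphism out of $I$ to an image in an infinite direct sum must land, evaluated at any given element, in a finite sub-sum, which makes projection onto a single summand meaningful.)

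For the converse, assume $\idp \subseteq \idq$. Lemma \ref{L:mapsindinject} furnishes $I, J \in \Sp(A)$ with $\alpha(I) = \idp$, $\alpha(J) = \idq$ and a non-zero morphism $I \xrightarrow{f} J$. Composing $f$ with the canonical inclusion $J \hookrightarrow I(\idq)$ yields a non-zero morphism $I \to I(\idq)$, which I extend by zero on all remaining summands of $I(\idp)$ using the universal property of the direct sum. This produces a non-zero element of $\Hom_A\bigl(I(\idp), I(\idq)\bigr)$.

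No real obstacle is expected; the only subtlety is making sure that ``projection to an indecomposable summand of $I(\idq)$'' makes sense when the index set is possibly infinite, which is handled by evaluating on a single element and observing that only finitely many components are non-zero. The statement is genuinely just a corollary, and its role in the paper is presumably to package Lemma \ref{L:mapsindinject} into a form indexed by $\Spec(R)$ rather than $\Sp(A)$, so that the subsequent reconstruction arguments can compare prime ideals of the center directly through Hom-vanishing in $\Qcoh(\bbX)$.
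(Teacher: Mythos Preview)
Your argument is correct and is exactly the deduction the paper intends; the corollary is stated without proof immediately after Lemma~\ref{L:mapsindinject} precisely because it follows by the summand-by-summand reduction you describe. One remark: your caution about infinite index sets is unnecessary here, since by Proposition~\ref{P:PrimeIdealsBascics} the fiber $\alpha^{-1}(\idp)$ is finite, so $I(\idp)$ is a finite direct sum.
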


\begin{lemma}
Let $P \in \Spec(A)$, $\idp := P \cap R \in \Spec(R)$ and $E$ be the injective hull of the $A_{\idp}$--module $A_{\idp}/P_{\idp}$. Then we have an isomorphism of 
$A$--modules $E \cong E_A(A/P)$ and $\End_{A_{\idp}}(E) \cong \End_A(E)$. 
\end{lemma}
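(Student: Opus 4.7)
My plan is to show that $E$, viewed as an $A$--module by restriction of scalars along the localization map $\jmath: A \to A_\idp$, is itself an injective essential extension of $A/P$, and hence isomorphic to $E_A(A/P)$ by uniqueness of the injective hull.

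First I would verify that $\jmath$ induces an injection $A/P \hookrightarrow A_\idp/P_\idp$. The nontrivial direction is $\jmath^{-1}(P_\idp) \subseteq P$: if $a \in A$ satisfies $\jmath(a) \in P_\idp$, then $sa \in P$ for some $s \in R \setminus \idp$; since $s$ is central in $A$, one has $sxa = xsa \in P$ for every $x \in A$, giving $sAa \subseteq P$, and the primality of $P$ together with $s \notin \idp = P \cap R$ forces $a \in P$. Composing with the tautological essential embedding $A_\idp/P_\idp \hookrightarrow E$ then produces an $A$--linear inclusion $A/P \hookrightarrow E$.

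Next I would establish that this inclusion is essential over $A$ and that $E$ is injective over $A$. For $A$--essentiality, take $0 \ne x \in E$ and use $A_\idp$--essentiality to find $\alpha = a/s \in A_\idp$ with $0 \ne \alpha x \in A_\idp/P_\idp$; since $s \in R\setminus\idp$ is a unit in $A_\idp$, it acts invertibly on both $E$ and $A_\idp/P_\idp$, so $ax = s \cdot \alpha x$ is a nonzero element of $A_\idp/P_\idp$, and writing it as $\overline{b/t}$ and multiplying by $t$ clears the denominator, producing a nonzero $A$--multiple $(ta)x$ of $x$ inside $A/P$. For injectivity of $E$ over $A$, I would invoke that the localization functor $A-\Mod \to A_\idp-\Mod$ is exact, so its right adjoint (restriction of scalars) preserves injectives. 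Combined, these two properties give $E \cong E_A(A/P)$ as $A$--modules.

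Finally, for the endomorphism identification, I would observe that any $f \in \End_A(E)$ is automatically $R$--linear, so commuting $f$ past each $s \in R\setminus\idp$ and past its inverse (which exists as an action on $E$ because $s$ is a unit in $A_\idp$) shows that $f$ commutes with the full $R_\idp$--action; since $A_\idp$ is generated over $A$ by the central elements $s^{-1}$, such an $f$ is then $A_\idp$--linear, yielding $\End_A(E) \cong \End_{A_\idp}(E)$. The main obstacle is the essentiality step: transferring $A_\idp$--essentiality to $A$--essentiality genuinely requires both the centrality of $R$ in $A$ (to commute the denominator $s$ past the element $a$) and the invertibility of the $R \setminus \idp$--action on $E$; the remaining ingredients are essentially formal.
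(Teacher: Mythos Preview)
Your proof is correct and follows essentially the same strategy as the paper: both use the adjunction between localization (exact left adjoint) and restriction of scalars to see that $E$ is injective over $A$, then verify that $A/P \hookrightarrow (A/P)_{\idp} \hookrightarrow E$ is an essential extension of $A$--modules, yielding $E \cong E_A(A/P)$. The only minor divergence is in the $\End$--identification: the paper observes that the adjunction counit is an isomorphism, so the forgetful functor $\Phi_{\idp}$ is \emph{fully faithful}, which immediately gives $\End_{A_{\idp}}(E) \cong \End_A(E)$; you instead prove this directly by showing that any $A$--linear endomorphism of $E$ commutes with the action of each $s^{-1}$ for $s \in R\setminus\idp$. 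These are two ways of expressing the same fact, your version being the concrete unpacking of what full faithfulness means here.
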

\begin{proof}
The forgetful functor $A_{\idp}-\Mod \stackrel{\Phi_{\idp}}\lar A-\Mod$ admits an exact left adjoint functor $A-\Mod \lar A_{\idp}-\Mod$ given by the localization with respect to  $\idp$. It is easy to see that the corresponding adjunction counit is an isomorphism. This implies that $\Phi_{\idp}$ is fully faithful and maps injective objects to injective objects. Hence, $E$ is an injective $A$--module and $\End_{A_{\idp}}(E) \cong \End_A(E)$. Next, it is not difficult to see that both inclusions
$
A/P \xhar (A/P)_{\idp} \xhar E
$
are essential extensions of $A$--modules. Hence, $E$ can be identified with the injective hull  of $A/P$, implying the result. 
\end{proof}

\begin{corollary}\label{C:IndInjModules} For any $\idp \in \Spec(R)$, let 
$\overline{\Sp}(A_{\idp})$ be the set of the isomorphism classes of 
indecomposable injective \emph{artinian} $A_{\idp}$--modules. 
Then we  have the following description  of indecomposable injective $A$--modules:
$$
\Sp(A) = \bigsqcup\limits_{\idp \in \Spec(R)} \overline{\Sp}(A_{\idp}) =  \bigsqcup\limits_{\idp \in \Spec(R)} \Max(A_{\idp}),
$$
where we view $I \in \overline{\Sp}(A_{\idp})$ as an element of $\Sp(A)$ via the forgetful functor $\Phi_{\idp}$.
\end{corollary}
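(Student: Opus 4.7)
The plan is to partition $\Sp(A) = \bigsqcup_{\idp \in \Spec(R)} \alpha^{-1}(\idp)$ along the fibers of the map $\alpha$ and to prove, for each $\idp \in \Spec(R)$ separately, that the forgetful functor $\Phi_{\idp}$ induces a bijection $\overline{\Sp}(A_{\idp}) \stackrel{\sim}{\lar} \alpha^{-1}(\idp)$. Well-definedness is almost immediate from the lemma preceding this corollary: $\Phi_{\idp}$ is fully faithful and carries injective $A_{\idp}$-modules to injective $A$-modules, and matches endomorphism rings, so it sends indecomposable injective $A_{\idp}$-modules to indecomposable injective $A$-modules. To see that $\alpha(\Phi_{\idp} J) = \idp$ for $J \in \overline{\Sp}(A_{\idp})$, I would observe that every $r \in R\setminus\idp$ acts invertibly on any $A_{\idp}$-module, while on an \emph{artinian} $A_{\idp}$-module every $r \in \idp$ acts locally nilpotently (the descending chain of images of powers of $r$ on a cyclic submodule stabilises at zero), and then invoke Proposition \ref{P:Associator}. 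Injectivity of the map is formal, since any $A$-linear isomorphism between artinian $A_{\idp}$-modules is automatically $R_{\idp}$-linear and hence $A_{\idp}$-linear.

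The substantive step is surjectivity. Given $I \in \alpha^{-1}(\idp)$, I would write $I = I_P$ for the unique $P$ with $\varepsilon(P) = I$, so that $P \cap R = \idp$. The lemma preceding the corollary identifies $E_A(A/P) \cong E_{A_{\idp}}(A_{\idp}/P_{\idp})$ as $A$-modules and simultaneously identifies their $A$-endomorphism ring with their $A_{\idp}$-endomorphism ring. Consequently, the $A$-module decomposition $E_A(A/P) \cong I_P^{\oplus m_P}$ is already a decomposition of $A_{\idp}$-modules, equipping $I_P$ canonically with the structure of an indecomposable injective $A_{\idp}$-module. Since $P_{\idp} \in \Max(A_{\idp})$ by Proposition \ref{P:PrimeIdealsBascics} and $A_{\idp}$ is noetherian semi-local, the Matlis-duality argument from the proof of Proposition \ref{P:EndomInjHull}, now applied to $A_{\idp}$ and its completion at $\idp R_{\idp}$, shows that $E_{A_{\idp}}(A_{\idp}/P_{\idp})$ is artinian (its lattice of $A_{\idp}$-submodules coincides with the lattice of $\widehat{A_{\idp}}$-submodules on $\idp$-torsion modules); a summand of an artinian module is artinian, so $I_P \in \overline{\Sp}(A_{\idp})$. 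This establishes the first equality.

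For the second equality, the bijection $\varepsilon: \Spec(A) \to \Sp(A)$ intertwines $\varrho$ and $\alpha$ by construction, so $\alpha^{-1}(\idp) \leftrightarrow \varrho^{-1}(\idp)$, and the last bullet of Proposition \ref{P:PrimeIdealsBascics} identifies $\varrho^{-1}(\idp)$ with $\Max(A_{\idp})$ via $P \mapsto P_{\idp}$; disjoint union over $\idp$ completes the proof. The main obstacle is the surjectivity step: one must upgrade the \emph{a priori} $A$-module $I_P$ to an artinian $A_{\idp}$-module in a canonical way, and this hinges decisively on the identification of the $A$- and $A_{\idp}$-endomorphism rings of $E_A(A/P)$ furnished by the preceding lemma—without it, one would only get $I_P$ as a summand of $E_{A_{\idp}}(A_{\idp}/P_{\idp})$ up to $A$-linear but possibly not $A_{\idp}$-linear isomorphism.
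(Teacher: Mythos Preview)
Your proof is correct and is precisely the argument the paper has in mind: the corollary is stated without proof, as it follows by assembling the preceding Lemma (identifying $E_A(A/P)$ with $E_{A_{\idp}}(A_{\idp}/P_{\idp})$ together with their endomorphism rings), Proposition~\ref{P:Associator}, Proposition~\ref{P:PrimeIdealsBascics}, and the bijection $\varepsilon$. Your surjectivity step and your handling of the second equality via $\varrho^{-1}(\idp)\leftrightarrow\Max(A_{\idp})$ are exactly the intended route.
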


\smallskip
\noindent
The following two results play the key role in the proof of the Morita theorem for  ncns.

\begin{proposition}\label{P:NoetherianEndom}
For any $\idp \in \Spec(R)$, the ring  $\End_A\bigl(I(\idp)\bigr)$ is a finite 
$\widehat{R}_{\idp}$--module. In particular, $\End_A\bigl(I(\idp)\bigr)$  is  noetherian. 
\end{proposition}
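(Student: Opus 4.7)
The plan is to reduce to the local case and then identify $\End_A(I(\idp))$ as a corner of the completion $\widehat{A_\idp}$, whose structure is already controlled by Proposition \ref{P:EndomInjHull}.

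First, using the localization--forgetful adjunction discussed just before Corollary \ref{C:IndInjModules}, the functor $A_\idp-\Mod \stackrel{\Phi_\idp}\lar A-\Mod$ is fully faithful, sends injectives to injectives, and the lemma preceding Corollary \ref{C:IndInjModules} shows that $E_A(A/P) \cong E_{A_\idp}(A_\idp/P_\idp)$ for every $P \in \Spec(A)$ with $P \cap R = \idp$. Hence the summands of $I(\idp)$ are simultaneously the indecomposable injective artinian $A_\idp$-modules, and $\End_A\bigl(I(\idp)\bigr) \cong \End_{A_\idp}\bigl(I(\idp)\bigr)$. Since $A_\idp$ is a finite $R_\idp$-algebra and $R_\idp$ is noetherian, we may replace the pair $(R, A)$ by $(R_\idp, A_\idp)$ and thereby assume that $R$ is local with maximal ideal $\idm = \idp$.

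In this local situation, Proposition \ref{P:PrimeIdealsBascics} gives $\Max(A) = \varrho^{-1}(\idm) = \{P_1, \dots, P_n\}$. Set $E_i := E_A(A/P_i)$, so that $E_i \cong I_{P_i}^{\oplus m_i}$ for uniquely determined $m_i \in \NN$. Then
$$
I(\idp) = I_{P_1} \oplus \cdots \oplus I_{P_n}, \qquad M := E_1 \oplus \cdots \oplus E_n \cong I_{P_1}^{\oplus m_1} \oplus \cdots \oplus I_{P_n}^{\oplus m_n},
$$
so $I(\idp)$ is a direct summand of $M$. Let $e \in \End_A(M)$ be the corresponding idempotent; then $\End_A\bigl(I(\idp)\bigr) \cong e \cdot \End_A(M) \cdot e$. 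By Proposition \ref{P:EndomInjHull}, $\End_A(M) \cong \widehat{A}^\circ$, which is a finite module over its center, and in particular a finite $\widehat{R}$-module because $A$ is a finite $R$-algebra (completion commutes with finite generation). The corner ring $e\,\widehat{A}^\circ\, e$ inherits finiteness over $\widehat{R}$, proving that $\End_A\bigl(I(\idp)\bigr)$ is a finite $\widehat{R}_\idp$-module.

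For the final assertion, $\widehat{R}_\idp$ is noetherian as the completion of a noetherian local ring, and any ring that is finitely generated as a module over a commutative noetherian subring in its center is itself (left and right) noetherian. The main conceptual step is the reduction to the local case, which relies on the fact established earlier that the injective hull of $A/P$ is unaffected by passing to $A_\idp$; once this is in place, the argument is a direct combination of Proposition \ref{P:EndomInjHull} with the idempotent corner construction, and no further obstruction arises.
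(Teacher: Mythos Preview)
Your proof is correct and follows essentially the same route as the paper's: reduce to the local ring $A_\idp$ via the fully faithful functor $\Phi_\idp$, invoke Proposition~\ref{P:EndomInjHull} to identify $\End_A(E_1\oplus\cdots\oplus E_n)\cong\widehat{A}_\idp^{\circ}$, and then observe that $I(\idp)$ and $E_1\oplus\cdots\oplus E_n$ have the same additive closure. The only cosmetic difference is that the paper phrases the last step as ``$\End_A(I(\idp))$ and $\widehat{A}_\idp^{\circ}$ are Morita equivalent'', whereas you extract $\End_A(I(\idp))$ as an idempotent corner $e\,\widehat{A}_\idp^{\circ}\,e$; your formulation makes the finiteness over $\widehat{R}_\idp$ slightly more transparent.
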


\begin{proof} In the notation of Proposition \ref{P:PrimeIdealsBascics}, let
$\varrho^{-1}(\idp) = 
\bigl\{P_1, \dots, P_n\bigr\}$, where $P_i \ne P_j$ for all $1 \le i \ne j \le n$.
Then $J := \bigl(P_1\bigr)_{\idp}\cap \dots \cap \bigl(P_n\bigr)_{\idp}$ is the Jacobson radical of $A_{\idp}$. Let $E_i := E_A(A/P_i)$ and 
$E := E_{A_{\idp}}\bigl(A_{\idp}/J\bigr)$. By Proposition \ref{P:EndomInjHull} we have: $\End_{A_{\idp}}(E) \cong {\widehat{A}_{\idp}}^\circ$. 
In particular, $\End_{A_{\idp}}(E)$ is a finite $\widehat{R}_{\idp}$--algebra.
On the other hand, we have
an isomorphism of $A$--modules $E \cong E_1 \oplus \dots \oplus E_n$. Since  $\Phi_{\idp}$ is fully faithful, we get a ring  isomorphism
$$
\End_{A_{\idp}}(E) \cong \End_{A}(E_1 \oplus \dots \oplus E_n).
$$
For any $1 \le i \le n$ there exists $m_i \in \NN$ such that $E_i 
\cong I_{P_i}^{\oplus m_i}$. Therefore, $\End_A\bigl(I(\idp)\bigr)$
and ${\widehat{A}_{\idp}}^\circ$ are Morita--equivalent,  what implies  the statement.
\end{proof}

\begin{lemma}\label{L:KeyLemma}
Let $I, J \in \Sp(A)$ be such that $\Hom_A(I, J) \ne 0$. Assume that $\alpha(I) \ne \alpha(J)$. Then $\Hom_A(I, J)$ is not noetherian viewed as a left $\End_A(J)$--module. 
\end{lemma}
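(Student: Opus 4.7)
My plan is to construct an explicit strictly ascending chain of left $H$--submodules of $\Hom_A(I,J)$, where $H := \End_A(J)$. Set $\idp := \alpha(I)$ and $\idq := \alpha(J)$. By Lemma \ref{L:mapsindinject} combined with $\Hom_A(I,J) \ne 0$ we have $\idp \subseteq \idq$, and the hypothesis $\alpha(I) \ne \alpha(J)$ then yields an element $r \in \idq \setminus \idp$. Proposition \ref{P:Associator} tells us that $\lambda_r^I \in \End_A(I)$ is an automorphism, while $\lambda_r^J \in H$ is locally nilpotent; in particular $\ker(\lambda_r^J) \ne 0$, so by Lemma \ref{L:EndomIndInj} the element $\lambda_r^J$ is not a unit of the local ring $H$, i.e.\ $\lambda_r^J \in \mathfrak{m}_H$, where $\mathfrak{m}_H$ denotes the maximal ideal of $H$.

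Next, I fix a non-zero $f \in \Hom_A(I,J)$ and define $f_n := f \circ (\lambda_r^I)^{-n}$ for all $n \ge 0$. Since $r \in R$ is central in $A$, the map $f_{n+1}$ is $R$--linear, hence
\[
\lambda_r^J \circ f_{n+1} \;=\; f_{n+1} \circ \lambda_r^I \;=\; f \circ (\lambda_r^I)^{-n} \;=\; f_n,
\]
so $f_n = \lambda_r^J \cdot f_{n+1} \in H \cdot f_{n+1}$. Consequently
\[
H f_0 \;\subseteq\; H f_1 \;\subseteq\; H f_2 \;\subseteq\; \cdots \;\subseteq\; \Hom_A(I,J).
\]

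It remains to verify that this chain is \emph{strictly} ascending. Assume the contrary: $H f_n = H f_{n+1}$ for some $n$. Then $f_{n+1} = h \circ f_n$ for some $h \in H$, and substituting the identity $f_n = \lambda_r^J \circ f_{n+1}$ gives $(1_H - h\lambda_r^J) \circ f_{n+1} = 0$. Since $\lambda_r^J \in \mathfrak{m}_H$ and $\mathfrak{m}_H$ is a two-sided ideal of $H$, we have $h\lambda_r^J \in \mathfrak{m}_H$, so $1_H - h\lambda_r^J$ lies in $1_H + \mathfrak{m}_H$ and is therefore a unit of the local ring $H$. This forces $f_{n+1} = 0$, which contradicts the fact that $(\lambda_r^I)^{-(n+1)}$ is an automorphism of $I$ while $f \ne 0$. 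Hence the chain is strictly ascending and $\Hom_A(I,J)$ is not a noetherian left $H$--module.

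The one slightly delicate point is locating the correct strictly increasing chain; once the candidate $f_n = f \circ (\lambda_r^I)^{-n}$ is in place, the rest is a short Nakayama-style argument powered by the localness of $\End_A(J)$ together with the asymmetric behaviour of $\lambda_r$ on $I$ and on $J$ coming from Proposition \ref{P:Associator}.
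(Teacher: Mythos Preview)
Your argument is correct, and it takes a genuinely different route from the paper's. The paper argues by contradiction: assuming $\Hom_A(I,J)$ is noetherian over $\End_A(J)$, it invokes Proposition~\ref{P:NoetherianEndom} to conclude that $\Hom_A(I,J)$ is finitely generated over the commutative local ring $\widehat{R}_{\idq}$; since $r\in\idq\setminus\idp$ acts bijectively (because $\lambda_r^I$ is invertible), one has $r\cdot\Hom_A(I,J)=\Hom_A(I,J)$, and the classical Nakayama lemma then forces $\Hom_A(I,J)=0$. Your proof instead builds an explicit strictly ascending chain of cyclic $H$--submodules and uses only the localness of $H=\End_A(J)$ (Lemma~\ref{L:EndomIndInj}) together with the asymmetry of $\lambda_r$ from Proposition~\ref{P:Associator}. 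The advantage of your approach is that it is more elementary and self-contained: it does not require the structural input of Proposition~\ref{P:NoetherianEndom} (finiteness of $\End_A(J)$ over $\widehat{R}_{\idq}$). The paper's approach, on the other hand, packages the non-noetherianity as a one-line application of commutative Nakayama once that proposition is in place. Both proofs ultimately exploit the same core observation, namely that multiplication by $r$ is surjective on $\Hom_A(I,J)$ while $\lambda_r^J$ lies in the radical; you simply unwind this into an explicit chain rather than quoting Nakayama.
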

\begin{proof}
Let $\idp = \alpha(I)$ and $\idq = \alpha(J)$. By Lemma \ref{L:mapsindinject} we have: $\idp \subseteq \idq$. Assume that $\Hom_A(I, J)$ is  noetherian viewed as a left $\End_A(J)$--module. By Proposition \ref{P:NoetherianEndom}, there exists a finite map of rings $\widehat{R}_{\idq} \stackrel{\vartheta}\lar \End_A(J)$. Hence, $\Hom_A(I, J)$ is  noetherian viewed as an $\widehat{R}_{\idq}$--module, too. Note that
for any $r \in \idq$, the corresponding element $\vartheta(r) \in \End_A(J)$ acts 
on $\Hom_A(I, J)$ by the rule $f \mapsto  \lambda_r^J \cdot f = f \cdot \lambda_r^I$. 
Suppose now that there exists $r \in \idq \setminus \idp$. Then  
$\lambda_r^I \in \End_A(I)$ is a unit. Hence, $r \cdot \Hom_A(I, J) =
\Hom_A(I, J)$. On the other hand,  $r \in \idq \widehat{R}_{\idq}$.   By Nakayama's Lemma, we get a contradiction.
\end{proof}

\subsection{Indecomposable injective objects of $\Qcoh(\bbX)$}
In this subsection, let $\bbX = (X, \kA)$ be a ncns.
First note the following standard result.

\begin{lemma}\label{L:LocalToGlobal} Let  $U \stackrel{\imath}\xhar X$ be an open subset. 
 Then the  following statements are true.
\begin{itemize}
\item The direct image functor $\Phi_U = \imath_\ast: \Qcoh(\bbU) \lar 
\Qcoh(\bbX)$ is fully faithful and maps (indecomposable) injective objects into (indecomposable) injective objects.
\item Assume that $U$ is affine and $x \in U$. Let $R = \Gamma(U, \kO)$, $A = \Gamma(U, \kA)$, $\idp \in \Spec(R)$ be the prime ideal corresponding to $x$ and
$A_x = A_{\idp}$. Then the functor $$A_x-\Mod \stackrel{\Phi_x}\lar \Qcoh(\bbX)$$ defined as the composition
$
A_x-\Mod \rightarrow A-\Mod \stackrel{\Phi_U}\lar \Qcoh(\bbX),
$
is fully faithful and maps (indecomposable) injective objects into (indecomposable) injective objects.
\item The functor $\Qcoh(\bbX) \lar A_x-\Mod$, assigning to a quasi--coherent $\kA$--module $\kF$ its stalk at the point $x$, is left adjoint
to $\Phi_x$. In particular, the functor $\Phi_x$ does not depend on the choice of an open affine neighbourhood of $x$.
\end{itemize}
\end{lemma}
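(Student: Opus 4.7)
The proof is essentially a compilation of standard adjunction arguments. I would use throughout two basic inputs: for an open immersion $\imath: U \hookrightarrow X$ of noetherian schemes, the pushforward $\imath_\ast$ preserves quasi-coherence and the counit $\imath^{-1}\imath_\ast \Rightarrow \id$ is a natural isomorphism; and for the affine ncns $\bbU$, the global sections functor furnishes an equivalence $\Qcoh(\bbU) \cong A\text{-}\Mod$.

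For part~(1), the restriction $\imath^{-1}$ is exact, sends $\kA$-modules to $\imath^{-1}\kA = \kA|_U$-modules, and is left adjoint to $\Phi_U = \imath_\ast$. The counit being an isomorphism yields full faithfulness of $\Phi_U$ by the standard formalism. Exactness of the left adjoint $\imath^{-1}$ then forces $\Phi_U$ to preserve injective objects. For indecomposability, full faithfulness gives a ring isomorphism $\End_{\bbU}(\kI) \cong \End_{\bbX}(\imath_\ast\kI)$; any non-trivial direct sum decomposition of $\imath_\ast\kI$ would produce a non-trivial idempotent in $\End_{\bbU}(\kI)$, contradicting indecomposability of $\kI$.

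For part~(2), the equivalence $\Qcoh(\bbU) \cong A\text{-}\Mod$ reduces the claim to studying the forgetful functor $\Psi: A_x\text{-}\Mod \to A\text{-}\Mod$. Because $R \setminus \idp$ is a \emph{central} multiplicative subset of $A$, the localization $A_x \otimes_A -$ is exact and is left adjoint to $\Psi$, and its unit $M \to A_x \otimes_A M$ is an isomorphism whenever $M$ is already an $A_x$-module (every element of $R \setminus \idp$ acts invertibly on such $M$). Hence $\Psi$ is fully faithful and preserves injectives, and indecomposability transfers through the endomorphism ring as in part~(1). Composing $\Psi$ with the equivalence and with $\Phi_U$ produces $\Phi_x$ with all of the required properties.

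For part~(3), I would chain together the three adjunctions $(\imath^{-1}, \imath_\ast)$, the equivalence $\Gamma(U, -): \Qcoh(\bbU) \cong A\text{-}\Mod$, and $(A_x \otimes_A -, \Psi)$. The composition of the left adjoints sends $\kF$ to $A_x \otimes_A \Gamma(U, \kF) = \kF_x$, producing the natural isomorphism
\begin{equation*}
\Hom_{\bbX}\bigl(\kF, \Phi_x(M)\bigr) \cong \Hom_{A_x}\bigl(\kF_x, M\bigr).
\end{equation*}
Uniqueness of right adjoints then gives the independence of $\Phi_x$ from the choice of affine open neighbourhood of $x$. There is no real obstacle here; the only item that requires a little care is the compatibility of each adjunction with the $\kA$-module structure, which is routine since $\imath^{-1}\kA = \kA|_U$ and $A_x$ is obtained by central localization of $A$.
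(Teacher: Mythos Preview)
Your argument is correct and is precisely the standard one the paper has in mind; the paper itself offers no proof of this lemma, introducing it only as a ``standard result'' and moving on. In fact the paper uses exactly your adjunction argument for part~(2) a few lines earlier (in the proof of the lemma on $E_A(A/P)$), writing that the forgetful functor $A_{\idp}\text{-}\Mod \to A\text{-}\Mod$ ``admits an exact left adjoint \dots\ the corresponding adjunction counit is an isomorphism,'' which is your reasoning verbatim.

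One terminological slip: in part~(2) you write ``its unit $M \to A_x \otimes_A M$ is an isomorphism whenever $M$ is already an $A_x$-module.'' What you need for full faithfulness of the right adjoint $\Psi$ is that the \emph{counit} $A_x \otimes_A \Psi(M) \to M$ is an isomorphism; the map you wrote is its inverse (equivalently, the unit evaluated at $\Psi(M)$). The mathematics is unaffected, but the label should be corrected.
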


\smallskip
\noindent
Results from
\cite[Section VI.2]{Gabriel} on Gabriel's recollement of locally noetherian abelian categories, combined with Corollary \ref{C:IndInjModules}, imply the following statement.

\begin{corollary}\label{P:IndInjGlobal}
Let $\Sp(\bbX)$ be the set of the isomorphism classes of indecomposable injective objects of $\Qcoh(\bbX)$. Then we have: 
$$
\Sp(\bbX) = \bigsqcup\limits_{x \in X} \overline{\Sp}(A_x) = \bigsqcup\limits_{x \in X} \Max(A_x),
$$
where we view $I \in \overline{\Sp}(A_{x})$ as an element of $\Sp(\bbX)$ via the  functor $\Phi_{x}$. In particular, we have a surjective map with finite fibers
\begin{equation}\label{E:Associator}
\Sp(\bbX) \stackrel{\alpha}\lar X,
\end{equation}
assigning to $\kI \in \Sp(\bbX)$ the  point $x \in X$ such that 
$\kI \cong \Phi_x(I)$ for some $I \in \overline{\Sp}(A_x)$.
\end{corollary}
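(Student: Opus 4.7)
The plan is to reduce to the affine case, handled by Corollary \ref{C:IndInjModules}, by iterating Gabriel's theory of recollements along a finite affine open cover of $X$, exactly as hinted in the comment preceding the statement.

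Since $X$ is noetherian, I fix a finite affine open cover $X = U_1 \cup \dots \cup U_n$ with $A_i := \Gamma(U_i, \kA)$ and argue by induction on $n$. The base case $n = 1$ is immediate: $\Qcoh(\bbX) \simeq A_1-\Mod$, and Corollary \ref{C:IndInjModules} gives both the indexing by points and the identification $\overline{\Sp}(A_x) = \Max(A_x)$. For $n > 1$, I set $U := U_1 \cup \dots \cup U_{n-1}$, $V := U_n$, $W := U \cap V$. Separatedness of $X$ ensures that $W$ is covered by $n-1$ affine opens, so the inductive hypothesis applies to $\bbU$, $\bbV$ and $\bbW$; meanwhile $\Qcoh(\bbX)$ is equivalent to the Gabriel recollement $\Qcoh(\bbU) \prod_{\Qcoh(\bbW)} \Qcoh(\bbV)$.

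I now invoke the classification of indecomposable injectives in a Gabriel recollement of locally noetherian categories from \cite[Chapter VI]{Gabriel}: every element of $\Sp(\bbX)$ is the image, under the fully faithful push-forward from $\bbU$ or from $\bbV$ (Lemma \ref{L:LocalToGlobal}), of a unique element of $\Sp(\bbU)$ or of $\Sp(\bbV)$, and two such push-forwards agree precisely when they come from a common element of $\Sp(\bbW)$. Combined with the inductive descriptions of $\Sp(\bbU)$, $\Sp(\bbV)$, $\Sp(\bbW)$ and the set-theoretic identity $X = U \cup V$, $U \cap V = W$, this immediately yields $\Sp(\bbX) = \sqcup_{x \in X} \overline{\Sp}(A_x) = \sqcup_{x \in X} \Max(A_x)$. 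The map $\alpha$ in \eqref{E:Associator} is then forced: for $\kI \in \Sp(\bbX)$, the unique $x \in X$ with $\kI \cong \Phi_x(I)$ defines $\alpha(\kI) := x$; each fibre $\alpha^{-1}(x) = \Max(A_x)$ is finite by Proposition \ref{P:PrimeIdealsBascics}, and surjectivity is clear since $\Max(A_x) \neq \varnothing$ for every $x \in X$.

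The main technical step is the appeal to Gabriel's recollement classification, and in particular checking that the ``gluing'' of indecomposable injectives across $\bbU$ and $\bbV$ matches the set-theoretic gluing $X = U \cup_W V$. A self-contained alternative would be to show directly that for any $\kI \in \Sp(\bbX)$ one can find $x \in X$ such that the stalk $\kI_x$ is an indecomposable artinian injective $A_x$-module and $\kI \cong \Phi_x(\kI_x)$, exploiting the exactness of stalks and their adjunction with the fully faithful functor $\Phi_x$ from Lemma \ref{L:LocalToGlobal}; however this requires extra associated-prime/support analysis of indecomposable injectives, so the recollement reduction is cleaner.
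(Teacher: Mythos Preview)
Your proposal is correct and follows essentially the same approach as the paper: the paper's proof consists of the single sentence preceding the corollary, stating that the result follows from Gabriel's recollement theory for locally noetherian abelian categories \cite[Section VI.2]{Gabriel} combined with the affine case (Corollary \ref{C:IndInjModules}). You have simply spelled out the induction on the number of affine charts that underlies this reduction, which is exactly what the paper's terse reference intends.
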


\begin{lemma}\label{L:SuppInjSheaf} Let $\kI \in \Sp(\bbX)$ and $x = \alpha(\kI) \in X$. Then we have: $\mathsf{Supp}(\kI) = \overline{\{x\}}$.
\end{lemma}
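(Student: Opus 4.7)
The plan is to realize $\kI \cong \Phi_x(I)$ for some $I \in \overline{\Sp}(A_x)$ using Corollary \ref{P:IndInjGlobal}, and to exploit the third bullet of Lemma \ref{L:LocalToGlobal}, namely that $\Phi_x$ is independent of the choice of affine open neighborhood of $x$. This reduces the global support computation to Proposition \ref{P:Associator}(4) applied on suitable affine charts. The main subtlety is that a single chart containing $x$ may fail to see all specializations of $x$, but this is circumvented by a chart--swap.

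Fix an affine open neighborhood $U \ni x$ and write $\kI \cong \imath_* \tilde I$, where $\imath: U \hookrightarrow X$ and $\tilde I \in \Qcoh(\bbU)$ is the quasi--coherent sheaf on $U$ associated to $I$ viewed as an $A$--module (with $A = \Gamma(U, \kA)$ and $R = \Gamma(U, \kO)$) via the restriction of scalars $A_x-\Mod \to A-\Mod$. Since this forgetful functor is fully faithful and preserves indecomposable injective objects, $I$ remains indecomposable injective as an $A$--module, with $R$--associated prime the prime $\idp$ of $R$ corresponding to $x$. Hence Proposition \ref{P:Associator}(4) yields $\mathsf{Supp}(\tilde I) = \overline{\{\idp\}} = \overline{\{x\}} \cap U$ as subsets of $U$.

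Assume first that $y \notin \overline{\{x\}}$ and choose an affine open $V$ with $y \in V \subseteq X \setminus \overline{\{x\}}$. For every open $W \subseteq V$ the intersection $W \cap U$ lies in $U \setminus \overline{\{x\}}$ and is therefore disjoint from $\mathsf{Supp}(\tilde I)$; consequently $\tilde I|_{W \cap U} = 0$, so $\kI(W) = \tilde I(W \cap U) = 0$. Thus $\kI|_V = 0$ and $\kI_y = 0$. Suppose next that $y \in \overline{\{x\}}$. The key elementary observation is that every open neighborhood $V$ of $y$ automatically contains $x$: indeed, otherwise $X \setminus V$ would be a closed set containing $x$, hence also containing $\overline{\{x\}}$, contradicting $y \in V$. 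Choose an affine such $V$ and invoke the chart--independence of $\Phi_x$ to represent $\kI \cong \jmath_* \tilde I_V$, where $\jmath: V \hookrightarrow X$ and $\tilde I_V$ is the quasi--coherent sheaf on $V$ associated to $I$ as a $\Gamma(V, \kA)$--module. A second application of Proposition \ref{P:Associator}(4) gives $\mathsf{Supp}(\tilde I_V) = \overline{\{x\}} \cap V$, which contains $y$; hence $\kI_y = (\tilde I_V)_y \ne 0$.
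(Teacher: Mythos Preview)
Your proof is correct and follows essentially the same approach as the paper's own proof: both use the topological fact that $y \in \overline{\{x\}}$ iff every open neighborhood of $y$ contains $x$, pick an affine chart containing both points in the positive case, and reduce everything to Proposition~\ref{P:Associator}(4). The only cosmetic difference is that in the case $y \in \overline{\{x\}}$ you invoke the chart--independence of $\Phi_x$ explicitly and then reapply Proposition~\ref{P:Associator}(4), whereas the paper simply takes $I := \Gamma(V,\kI)$ on a common affine $V$ and uses $\idp \subseteq \idq$ directly; these are the same argument in different words.
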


\begin{proof} First note the following topological fact: a point $y \in X$ belongs to
$\overline{\{x\}}$ if and only if for any open neighbourhood $y \in U \subseteq X$ we have: $x \in U$. 

\smallskip
\noindent
Let $y \in \overline{\{x\}}$. Consider any open affine neighbourhood $x,y \in V$ and put $R := \Gamma(V, \kO)$, $A := \Gamma(V, \kA)$. Let $\idp, \idq \in \Spec(R)$ be the prime ideals corresponding to $x$ and $y$, respectively. 
Note that  $I:= \Gamma(V, \kI)$ is an indecomposable injective $A$--module, whose associated prime ideal is $\idp$. It follows that $\idp \subseteq \idq$, hence
$I_{\idq} \ne 0$. Therefore, we have:  $y \in \mathsf{Supp}(\kI)$. 

\smallskip
\noindent
Assume now that $y \notin \overline{\{x\}}$. Then there exists an open affine subset
$V \subseteq X$ such that $y \in V$ and $x \notin V$.
Proposition \ref{P:Associator} implies that
 for any open affine neighbourhood $x \in U$ we have:
$\Gamma(V, \kI) = \Gamma(V\cap U, \kI) = 0$, hence $\kI_y = 0$. 
\end{proof}

\begin{proposition}\label{P:HomVanishingNonvanishing}
Let $\kI, \kJ \in \Sp(\bbX)$ be such that $\Hom_{\bbX}(\kI, \kJ) \ne 0$, Then we have: $y \in \overline{\{x\}}$, where $x = \alpha(\kI)$ and $y = \alpha(\kJ)$.

\smallskip
\noindent
Conversely, let  $x, y \in X$ be such that $y \in \overline{\{x\}}$. Then  there exist
$\kI, \kJ \in \Sp(\bbX)$ such that 
$x = \alpha(\kI)$, $y = \alpha(\kJ)$ and $\Hom_{\bbX}(\kI, \kJ) \ne 0$.
\end{proposition}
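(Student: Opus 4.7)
My plan is to reduce the proposition to its purely module--theoretic analogue Lemma \ref{L:mapsindinject} by exploiting two pieces of structure already in place: the support description $\mathsf{Supp}(\kI) = \overline{\{\alpha(\kI)\}}$ from Lemma \ref{L:SuppInjSheaf}, and the stalk--pushforward adjunction
\[
\Hom_\bbX\bigl(-, \Phi_y(J)\bigr) \cong \Hom_{A_y}\bigl((-)_y, J\bigr)
\]
together with the full faithfulness of $\imath_\ast$ supplied by Lemma \ref{L:LocalToGlobal}.

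For the forward direction, I would take a non--zero morphism $\kI \stackrel{f}\lar \kJ$ and form $\kK := \mathsf{Im}(f)$, which is at once a quotient of $\kI$ and a non--zero subsheaf of $\kJ$. Quotients only shrink supports, so Lemma \ref{L:SuppInjSheaf} forces $\mathsf{Supp}(\kK) \subseteq \mathsf{Supp}(\kI) = \overline{\{x\}}$. To locate $y$ inside this support, I would write $\kJ \cong \Phi_y(J_0)$ for some $J_0 \in \overline{\Sp}(A_y)$ via Corollary \ref{P:IndInjGlobal} and apply the adjunction: the non--zero inclusion $\kK \hookrightarrow \kJ$ corresponds bijectively to a non--zero morphism $\kK_y \lar J_0$ of $A_y$--modules, so $\kK_y \ne 0$. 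Combining this with the previous support inclusion gives $y \in \overline{\{x\}}$.

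For the converse, given $y \in \overline{\{x\}}$, I would pick any affine open neighbourhood $V \subseteq X$ of $y$; by the elementary topological fact recalled at the beginning of the proof of Lemma \ref{L:SuppInjSheaf}, $V$ automatically contains $x$. Setting $R := \Gamma(V, \kO)$, $A := \Gamma(V, \kA)$ and letting $\idp \subseteq \idq$ be the prime ideals of $R$ corresponding to $x$ and $y$, the second half of Lemma \ref{L:mapsindinject} applied to $A$ supplies $I, J \in \Sp(A)$ with $\alpha(I) = \idp$, $\alpha(J) = \idq$ and $\Hom_A(I, J) \ne 0$. Via Corollary \ref{C:IndInjModules} these are elements of $\overline{\Sp}(A_x)$ and $\overline{\Sp}(A_y)$, so $\kI := \Phi_x(I)$ and $\kJ := \Phi_y(J)$ lie in $\Sp(\bbX)$ with $\alpha(\kI) = x$ and $\alpha(\kJ) = y$; full faithfulness of $\imath_\ast$ from Lemma \ref{L:LocalToGlobal} then transports the non--zero Hom upstairs, yielding $\Hom_\bbX(\kI, \kJ) \ne 0$.

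I do not anticipate a substantial technical obstacle: the module case has already been carried out, and the forward direction only requires combining the support statement with the adjunction, avoiding any separate argument that subobjects of $\Phi_y(J_0)$ are ``essential''. The one subtle point worth double--checking is that the adjunction of Lemma \ref{L:LocalToGlobal} is additive, so that non--zero morphisms correspond to non--zero morphisms under its bijection; this is automatic for additive functors between preadditive categories and so requires no extra work.
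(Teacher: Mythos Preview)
Your proposal is correct and follows essentially the same route as the paper. The only difference is cosmetic: for the forward direction the paper applies the adjunction directly to the non--zero map $\kI \to \kJ$ to obtain $\kI_y \ne 0$ (hence $y \in \mathsf{Supp}(\kI) = \overline{\{x\}}$ by Lemma \ref{L:SuppInjSheaf}), without the detour through the image $\kK$.
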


\begin{proof}
Let $J \in \Sp(A_y)$ be such that $\kJ \cong \Phi_y(J)$. Then $
\Hom_{A_y}\bigl(\kI_y, J) \cong \Hom_{\bbX}(\kI, \kJ)  \ne 0,
$
implying that $\kI_y \ne 0$. The first statement is proven. 

\smallskip
\noindent
To show the second part, take any common open affine neighbourhood $x, y\in V$. Let
$R := \Gamma(V, \kO)$ and  $A := \Gamma(V, \kA)$. Let $\idp, \idq  \in \Spec(R)$  be the prime ideals corresponding to the points $x, y \in V$. Then we have: 
$\idp \subseteq \idq$. According to Lemma \ref{L:mapsindinject}, there exist
$I, J \in \Sp(A)$ such that $\Hom_A(I, J) \ne 0$ and $\alpha(I) = \idp, \alpha(J) = \idq$. Let $\kI := \Phi_V(I)$ and $\kJ = \Phi_V(J)$. Then we have: 
$\kI, \kJ \in \Sp(\bbX)$ and $\alpha(\kI) = x, \alpha(\kJ) = y$.
Moreover, since  $\Phi_V$ is fully faithful, we have: $\Hom_{\bbX}(\kI, \kJ) \ne 0$.
\end{proof}

\begin{corollary}
For any $x \in X$ we put:
$
\kI(x) := \bigoplus\limits_{\substack{\kI \in \Sp(\bbX) \\ \alpha(\kI) = x}} \kI.
$
Then for any $x, y \in X$ we have:
$
\Hom_{\bbX}\bigl(\kI(x), \kI(y)\bigr) \ne 0$ if and only if 
$y \in \overline{\{x\}}$.
\end{corollary}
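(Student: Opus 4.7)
The plan is to deduce this corollary essentially immediately from Proposition \ref{P:HomVanishingNonvanishing}, using only one additional ingredient: the finiteness of the fibers of $\alpha$ established in Corollary \ref{P:IndInjGlobal}. Because each fiber $\alpha^{-1}(x)$ is finite, the sheaf $\kI(x)$ is a \emph{finite} direct sum of indecomposable injective objects, and the same holds for $\kI(y)$. Consequently, the bifunctoriality of $\Hom_{\bbX}$ with respect to finite direct sums yields a canonical decomposition
\begin{equation*}
\Hom_{\bbX}\bigl(\kI(x), \kI(y)\bigr) \;\cong\; \bigoplus_{\substack{\kI \in \Sp(\bbX) \\ \alpha(\kI) = x}} \; \bigoplus_{\substack{\kJ \in \Sp(\bbX) \\ \alpha(\kJ) = y}} \Hom_{\bbX}(\kI, \kJ).
\end{equation*}

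For the forward implication, I would assume that the left--hand side is non--zero; then at least one of the summands $\Hom_{\bbX}(\kI, \kJ)$ on the right is non--zero with $\alpha(\kI) = x$ and $\alpha(\kJ) = y$, and the first part of Proposition \ref{P:HomVanishingNonvanishing} immediately gives $y \in \overline{\{x\}}$. For the converse, if $y \in \overline{\{x\}}$, then the second part of Proposition \ref{P:HomVanishingNonvanishing} produces  $\kI, \kJ \in \Sp(\bbX)$ with $\alpha(\kI) = x$, $\alpha(\kJ) = y$ and $\Hom_{\bbX}(\kI, \kJ) \neq 0$, which is a direct summand of $\Hom_{\bbX}\bigl(\kI(x), \kI(y)\bigr)$, forcing the latter to be non--zero.

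There is no substantive obstacle here; the statement is genuinely a corollary and the only subtle point worth mentioning explicitly is that the direct sums $\kI(x)$ are finite, which is precisely what makes the $\Hom$--decomposition above legitimate. If one wanted to emphasize this structural point, it would be clean to record it as a short remark before applying Proposition \ref{P:HomVanishingNonvanishing} twice.
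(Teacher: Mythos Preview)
Your proposal is correct and matches the paper's intent: the corollary is stated immediately after Proposition \ref{P:HomVanishingNonvanishing} with no proof, as it follows directly from that proposition together with the finiteness of the fibers of $\alpha$ (Corollary \ref{P:IndInjGlobal}). Your explicit decomposition of $\Hom_{\bbX}\bigl(\kI(x), \kI(y)\bigr)$ into a finite direct sum and the two applications of Proposition \ref{P:HomVanishingNonvanishing} are exactly the intended argument.
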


\section{Proof of the Morita theorem for non--commutative noetherian schemes}

\noindent
Let $\bbX = (X, \kA)$ be a ncns. Since we focus on  the study of the category $\Qcoh(\bbX)$, we additionally assume   that $\bbX$ is
\emph{central}; see Remark \ref{R:RemarkCentrality}.

\subsection{Reconstruction of the central scheme} In this subsection we explain, how
the commutative scheme $(X, \kO)$ can be recovered from the category $\Qcoh(\bbX)$.

\begin{lemma}\label{L:KeyNoetherianLemma}
Let $\Lambda$ be a left noetherian ring, $e \in \Lambda$ an idempotent, $\Gamma = e \Lambda e$ and $F = e \Lambda f$, where $f = 1-e$. Then $F$ is noetherian viewed as a left $\Gamma$--module. 
\end{lemma}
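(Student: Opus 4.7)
The plan is to exploit the Peirce decomposition of $\Lambda$ with respect to the orthogonal idempotents $e$ and $f = 1 - e$, and to reduce the noetherian property of $F$ over $\Gamma$ to the noetherian property of the left ideal $\Lambda f$ inside the left noetherian ring $\Lambda$. The key point will be to construct an order-preserving injection from the lattice of $\Gamma$-submodules of $F$ into the lattice of $\Lambda$-submodules of $\Lambda f$.

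First I would record that $\Lambda f$ is a left ideal of $\Lambda$, hence a noetherian left $\Lambda$-module; and that as abelian groups one has the direct sum decomposition
\[
\Lambda f \; = \; e\Lambda f \,\oplus\, f\Lambda f \; = \; F \,\oplus\, f\Lambda f.
\]
I would also note that every element $n \in F$ satisfies $en = n$, so any additive subgroup $N \subseteq F$ is automatically closed under left multiplication by $e$, and a $\Gamma$-submodule structure on $N$ is just the restriction of the $\Lambda$-multiplication through $\Gamma = e\Lambda e \subseteq \Lambda$.

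Next, given a $\Gamma$-submodule $N \subseteq F$, I would compute $\Lambda N$ via the Peirce decomposition $\Lambda = e\Lambda e + e\Lambda f + f\Lambda e + f\Lambda f$. Using $fe = 0$ together with $N \subseteq e\Lambda f$, the four summands act as follows:
\[
(e\Lambda e)\,N = \Gamma N = N \subseteq F, \qquad (f\Lambda e)\,N \subseteq f\Lambda f,
\]
\[
(e\Lambda f)\,N = 0 = (f\Lambda f)\,N.
\]
Consequently
\[
\Lambda N \; = \; N \,\oplus\, (f\Lambda e)\,N \; \subseteq \; F \,\oplus\, f\Lambda f \; = \; \Lambda f,
\]
the direct sum being internal. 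In particular
\[
\Lambda N \,\cap\, F \; = \; N.
\]

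Therefore the assignment $N \mapsto \Lambda N$ defines an order-preserving map from $\Gamma$-submodules of $F$ to $\Lambda$-submodules of $\Lambda f$, and the formula $N = \Lambda N \cap F$ exhibits $M \mapsto M \cap F$ as a retraction, so this map is injective. Given now an ascending chain $N_1 \subseteq N_2 \subseteq \dots$ of $\Gamma$-submodules of $F$, the induced ascending chain $\Lambda N_1 \subseteq \Lambda N_2 \subseteq \dots$ of $\Lambda$-submodules of $\Lambda f$ must stabilize, and intersecting each term with $F$ shows that the original chain stabilizes as well.

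I do not anticipate any serious obstacle: the whole argument is an exercise in bookkeeping with Peirce components, and the only step one must carry out with care is the verification of the identity $\Lambda N \cap F = N$, which rests crucially on the orthogonality relation $fe = 0$ and on the fact that $N$ is already stable under $\Gamma = e\Lambda e$.
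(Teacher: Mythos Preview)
Your proof is correct and follows essentially the same idea as the paper's: both use the Peirce decomposition to lift a $\Gamma$--submodule $N\subseteq F$ to a left $\Lambda$--submodule whose $e\Lambda f$--component is exactly $N$, so that strict chains in $F$ produce strict chains in $\Lambda$. The only cosmetic difference is that the paper argues by contradiction and uses the slightly larger left ideal $J_N=\Lambda N+\Lambda e$, whereas you work directly with $\Lambda N\subseteq\Lambda f$ and phrase the argument as an order--preserving injection with retraction $M\mapsto M\cap F$.
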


\begin{proof}
Let $\widetilde{\Gamma} = f \Lambda f$ and $\widetilde{F} = f \Lambda e$, then we have the Peirce decomposition
$
\Lambda = 
\left(
\begin{array}{cc}
\widetilde{\Gamma} & \widetilde{F} \\
F & \Gamma
\end{array}
\right).
$
Assume that $F$ is not noetherian. Then there exists  an infinite chain of left $\Gamma$--modules
$
F_1 \subsetneqq F_2 \subsetneqq  \dots \subsetneqq F.
$
For any $n \in \NN$, put:
$
J_n :=
\left(
\begin{array}{cc}
\widetilde{F} F_n & \widetilde{F} \\
F_n & \Gamma
\end{array}
\right).
$
Then $J_n$ is a left ideal in $\Lambda$ and we get an infinite chain
$
J_1 \subsetneqq J_2 \subsetneqq  \dots \subsetneqq \Lambda.
$
Contradiction.
\end{proof}

\begin{proposition}\label{P:PropositionReconstructionI}
For any non--empty finite subset $\Omega \subset \Sp(\bbX)$ we put:
$$
\kI(\Omega) := \bigoplus\limits_{\kI \in \Omega} \kI \quad \mbox{\rm and} \quad
A(\Omega):= \End_{\bbX}\bigl(\kI(\Omega)\bigr).
$$
Let  $\Sp(\bbX) \stackrel{\alpha}\lar X$ be the map assigning to an indecomposable injective object of $\Qcoh(\bbX)$ its uniquely determined associated point of $X$ (see Corollary \ref{P:IndInjGlobal}). 
Then the following statements are true. 
\begin{enumerate}
\item If $\Omega = \alpha^{-1}(x)$ for some $x \in X$, then $A(\Omega)$ is noetherian and connected.
\item Conversely, if $A(\Omega)$ is noetherian and connected then we have:
$\big|\alpha(\Omega)\big| = 1$. 
\item Let $\Omega$ be such that $A(\Omega)$ is noetherian and connected, but for any finite $\Omega \subsetneqq \widetilde\Omega$, the algebra 
$A(\widetilde\Omega)$ does not have this property. Then $\Omega = \alpha^{-1}(x)$ for some $x \in X$. 
\end{enumerate}
\end{proposition}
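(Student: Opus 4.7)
The plan is to establish the three assertions in turn, with (1) and (2) containing the content and (3) following formally. For (1), fix $x \in X$ and set $\Omega = \alpha^{-1}(x)$. By Corollary \ref{P:IndInjGlobal} the elements of $\Omega$ correspond to $\Max(A_x)$ via the fully faithful embedding $\Phi_x : A_x-\Mod \to \Qcoh(\bbX)$ of Lemma \ref{L:LocalToGlobal}, so $A(\Omega) \cong \End_{A_x}\bigl(\bigoplus_{P \in \Max(A_x)} I_P\bigr)$. Since each $E_{A_x}(A_x/P) \cong I_P^{\oplus m_P}$, this module has the same additive closure as $E_{A_x}(A_x/J_x) = \bigoplus_{P} E_{A_x}(A_x/P)$; hence $A(\Omega)$ is Morita equivalent to $\End_{A_x}\bigl(E_{A_x}(A_x/J_x)\bigr) \cong \widehat{A}_x^{\circ}$ by Proposition \ref{P:EndomInjHull}. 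Centrality of $\bbX$ gives $Z(A_x) = O_x$, so Lemma \ref{L:CenterofRing} identifies the center of $\widehat{A}_x^{\circ}$ with the local ring $\widehat{O}_x$, yielding connectedness; noetherianness follows because $\widehat{A}_x$ is a finite module over the noetherian ring $\widehat{O}_x$, and both properties transfer through Morita equivalence.

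For (2), I will show that if $A(\Omega)$ is noetherian then $\Hom_{\bbX}(\kI,\kJ) = 0$ whenever $\kI, \kJ \in \Omega$ satisfy $\alpha(\kI) \neq \alpha(\kJ)$; this produces a product decomposition $A(\Omega) \cong \prod_{x \in \alpha(\Omega)} A\bigl(\Omega \cap \alpha^{-1}(x)\bigr)$, whereupon connectedness forces $|\alpha(\Omega)| = 1$. Suppose to the contrary that $\kI, \kJ \in \Omega$ satisfy $x := \alpha(\kI) \neq y := \alpha(\kJ)$ and $\Hom_{\bbX}(\kI,\kJ) \neq 0$. Proposition \ref{P:HomVanishingNonvanishing} forces $y \in \overline{\{x\}}$, so any open affine $V \ni y$ also contains $x$. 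On $V$ the sheaves $\kI, \kJ$ become indecomposable injective $A(V)$-modules $I, J$ with distinct associated primes in $\Spec(O(V))$, and full faithfulness of the direct image functor identifies $\Hom_{\bbX}(\kI,\kJ) \cong \Hom_{A(V)}(I,J)$. On the other hand, Lemma \ref{L:KeyNoetherianLemma} applied to $\Lambda = A(\Omega)$ with $e$ the idempotent projecting $\kI(\Omega)$ onto $\kJ$ makes $\Hom_{\bbX}(\kI,\kJ)$ noetherian as a left module over $\End_{A(V)}(J) = \End_{\bbX}(\kJ)$, in direct contradiction with Lemma \ref{L:KeyLemma}.

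Part (3) is then immediate: by (2) there exists $x \in X$ with $\alpha(\Omega) = \{x\}$, hence $\Omega \subseteq \alpha^{-1}(x)$; since $\alpha^{-1}(x)$ is finite (Corollary \ref{P:IndInjGlobal}) with noetherian and connected endomorphism ring $A\bigl(\alpha^{-1}(x)\bigr)$ by (1), the maximality hypothesis forces $\Omega = \alpha^{-1}(x)$. I expect the main obstacle to be the bookkeeping in (2): one must choose an open affine $V$ in which the specialization $y \in \overline{\{x\}}$ is realized, identify $\Hom_{\bbX}(\kI,\kJ)$ with $\Hom_{A(V)}(I,J)$ via Lemma \ref{L:LocalToGlobal} and Corollary \ref{C:IndInjModules}, and match the idempotent produced by Lemma \ref{L:KeyNoetherianLemma} with the correct left-module structure in Lemma \ref{L:KeyLemma} so that the two conclusions actually collide on the same object.
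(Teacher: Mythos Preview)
Your proof is correct and follows essentially the same approach as the paper, invoking the same key ingredients (Proposition \ref{P:EndomInjHull} and centrality for part (1); Lemma \ref{L:KeyNoetherianLemma} together with Lemma \ref{L:KeyLemma} for part (2)). Your organization of (2) is slightly different and arguably cleaner: you first show that noetherianness of $A(\Omega)$ alone forces all cross--fiber Homs to vanish, and only then invoke connectedness via the resulting product decomposition, whereas the paper intertwines the two hypotheses by first selecting a generic point of $\alpha(\Omega)$ and using connectedness to locate a nonzero cross--Hom before deriving the contradiction from noetherianness.
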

\begin{proof}
(1) Let $x \in X$ and $\Omega = \alpha^{-1}(x)$. By Proposition \ref{P:NoetherianEndom} and Lemma \ref{L:LocalToGlobal}, the algebra $A(\Omega)$ is a finite $\widehat{O}_x$--module, hence it is noetherian. Moreover, it is Morita--equivalent to the algebra 
$\widehat{A}_x^\circ$, hence $\widehat{O}_x = Z(\widehat{A}_x^\circ) =
Z\bigl(A(\Omega)\bigr)$ (at this place we use that $\bbX$ is central). Since the center of a disconnected algebra can not be local, this imples that  
$A(\Omega)$ is connected. The first statement is proven.

\smallskip
\noindent
(2) Now, let  $\Omega \subset \Sp(\bbX)$ be a finite subset such that $\big|\alpha(\Omega)\big| \ge 2$. Choose any  $x \in \Omega$ such that $x \notin \overline{\{y\}}$ for all $y \in \Omega \setminus\{x\}$. For any $\kI, \kJ \in \Omega$ such that $\alpha(\kI) = x$ and  $\alpha(\kJ) \ne x$ we have: $\Hom_{\bbX}(\kJ, \kI) = 0$; see Proposition \ref{P:HomVanishingNonvanishing}. If the algebra $A(\Omega)$ is connected then there exist $\kI, \kJ \in \Omega$ such that $\alpha(\kI) = x$, $\alpha(\kJ) = y \ne x$ and  $\Hom_{\bbX}(\kI, \kJ) \ne 0$. 
Proposition \ref{P:HomVanishingNonvanishing} implies that 
$y \in  \overline{\{x\}}$.

\smallskip
\noindent
Next, there exists an idempotent $e \in A(\Omega)$ such that $e A(\Omega) e \cong 
\End_{\bbX}(\kJ)$. Let $f = 1 -e$. 
Then $\Hom_{\bbX}(\kI, \kJ)$ is a direct summand of
$e A(\Omega) f$ viewed as a left $\End_{\bbX}(\kJ)$--module. 
Suppose now that $A(\Omega)$ is noetherian. Then Lemma \ref{L:KeyNoetherianLemma} implies that $\Hom_{\bbX}(\kI, \kJ)$ is a noetherian left $\End_{\bbX}(\kJ)$--module. 
Let $V \subseteq X$ be an open affine subset such that $x, y \in V$
$R := \Gamma(V, \kO)$ and  $A := \Gamma(V, \kA)$. Then there exist
$I, J \in \Sp(A)$ such that $\kI \cong \Phi_V(I)$ and $\kJ \cong \Phi_V(\kJ)$. 
Moreover,  $\Phi_V$ identifies the left $\End_A(J)$--module  $\Hom_A(I, J)$ with 
the left $\End_{\bbX}(\kJ)$--module $\Hom_{\bbX}(\kI, \kJ)$. However, since $x \ne y$, the associated prime ideals of $I$ and $J$ in the ring $R$ are different. Lemma \ref{L:KeyLemma} implies that 
$\Hom_A(I, J)$
 is not noetherian as a left $\End_A(J)$--module. Contradiction. 
 
 \smallskip
\noindent
(3) This statement is a consequence of the first two. 
\end{proof}

\medskip
\noindent
Proposition \ref{P:PropositionReconstructionI} implies  that the scheme $X$, viewed as a topological space, can be recovered from the category  $\Qcoh(\bbX)$. 
Our next goal is to  explain the reconstruction of the structure sheaf of $X$.
For any closed subset  $Z \subseteq X$ we put:
\begin{equation}
\Qcoh_{Z}(\bbX) = \left\{\kF \in \mathsf{Ob}\bigl(\Qcoh(\bbX)\bigr) \; \big|\; 
\mathsf{Supp}(\kF) \subseteq Z\right\}.
\end{equation}
It is clear that $\Qcoh_{Z}(\bbX)$ is a Serre subcategory of $\Qcoh(\XX)$. 
Let $U:= X\setminus Z \stackrel{\imath}\xhar X$,
 then the restriction functor
$\Qcoh(\bbX) \stackrel{\imath^\ast}\lar \Qcoh(\bbU)$ induces an equivalence of categories
$
\Qcoh(\bbX)/\Qcoh_Z(\bbX) \lar \Qcoh(\bbU).
$
Since $\imath^\ast$ admits a right adjoint functor $\imath_\ast$, $\Qcoh_{Z}(\bbX)$ is a localizing subcategory of $\Qcoh(\bbX)$. 

\smallskip
\noindent
Recall that the localizing subcategories of an arbitrary  locally noetherian abelian category $\sA$ stand in bijection with the subsets of the set $\Sp(\sA)$ of indecomposable injective objects of $\sA$; see \cite[Section III.4]{Gabriel}. Our next goal is to characterize in these terms the localizing subcategories $\Qcoh_{Z}(\bbX)$ for $Z \subseteq X$ closed.

\begin{proposition}\label{P:LocalizingSubcatViaIndInj}
Let $Z \subseteq X$ be a closed subset. Then we have:
$$
\Qcoh_{Z}(\bbX) = \left\{\kF \in \mathsf{Ob}\bigl(\Qcoh(\bbX)\bigr) \; \big|\; 
\Hom_{\bbX}(\kF, \kI) = 0 \; \mbox{\rm for all} \; \kI \in \Sp(\bbX): \alpha(\kI) \in X \setminus Z\right\}.
$$
Let $Z = \overline{\left\{x_1, \dots, x_n\right\}}$, where $x_i \notin \overline{\left\{x_j\right\}}$ for all $1 \le i \ne j \le n$.
 Then $\Qcoh_{Z}(\bbX)$ is the smallest localizing subcategory of $\Qcoh(\bbX)$ containing $\kE := \kI(x_1) \oplus \dots \oplus \kI(x_n)$.
\end{proposition}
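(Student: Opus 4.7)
The plan is to prove the two claims separately, each exploiting the adjunction $\Hom_\bbX(\kF, \Phi_y(I)) \cong \Hom_{A_y}(\kF_y, I)$ provided by Lemma~\ref{L:LocalToGlobal}.

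For the first claim, one direction is immediate: if $\mathsf{Supp}(\kF) \subseteq Z$ and $y := \alpha(\kI) \notin Z$, then $\kF_y = 0$, so the adjunction forces $\Hom_\bbX(\kF, \kI) = 0$. For the converse, if $\mathsf{Supp}(\kF) \not\subseteq Z$, pick $y \notin Z$ with $\kF_y \ne 0$. The injective envelope $E(\kF_y)$ of this nonzero $A_y$-module decomposes as a direct sum of indecomposable injective $A_y$-modules (Matlis's theorem, applied to the noetherian ring $A_y$). The essential embedding $\kF_y \hookrightarrow E(\kF_y)$ followed by projection onto a summand that meets $\kF_y$ nontrivially yields a nonzero map $\kF_y \to I$, hence a nonzero $\Hom_\bbX(\kF, \Phi_y(I))$ with $\alpha(\Phi_y(I)) = y \notin Z$.

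For the second claim, the inclusion $\sL \subseteq \Qcoh_Z(\bbX)$ is immediate: by Lemma~\ref{L:SuppInjSheaf} each $\kI(x_i)$ has support $\overline{\{x_i\}} \subseteq Z$, so $\kE \in \Qcoh_Z(\bbX)$, which is itself localizing. For the reverse inclusion, I would invoke the fact (\cite[Chapter III, \S 4]{Gabriel}) that in a locally noetherian Grothendieck category a localizing subcategory is uniquely determined by the set of indecomposable injective objects it contains. Combined with Lemma~\ref{L:SuppInjSheaf}, this reduces the problem to showing that every $\kI \in \Sp(\bbX)$ with $y := \alpha(\kI) \in \overline{\{x_i\}}$ lies in $\sL$.

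To handle this key step, I would work on an affine open $V \subseteq X$ containing both $x_i$ and $y$, and set $A := \Gamma(V, \kA)$, $R := \Gamma(V, \kO)$, letting $\idp \subseteq \idq$ be the primes corresponding to $x_i, y$. For each prime $P$ of $A$ above $\idp$, the essential extension $A/P \hookrightarrow E_A(A/P) \cong I_P^{\oplus m_P}$ (Proposition~\ref{P:EndomInjHull}) together with the fact that $\Phi_V(I_P)$ is a direct summand of $\kI(x_i) \subseteq \kE$ yield $\Phi_V(A/P) \in \sL$. By going up (Proposition~\ref{P:PrimeIdealsBascics}), each such $P$ admits $Q \supseteq P$ in $\Spec(A)$ with $Q \cap R = \idq$, and the surjection $A/P \twoheadrightarrow A/Q$ then gives $\Phi_V(A/Q) \in \sL$. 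Writing $\kI \cong \Phi_y(E_{A_y}(A_y/Q_0))$ and expressing $E_{A_y}(A_y/Q_0)$ as the filtered union of its socle filtration, whose subquotients are direct sums of the simple $A_y$-module $A_y/Q_0 \cong (A/Q)_y$, one should extract $\kI$ from the coherent $\Phi_V(A/Q)$ using closure of $\sL$ under extensions and filtered colimits.

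The main obstacle I anticipate is that a single application of going up from primes above $\idp$ need not reach every prime of $A$ above $\idq$, so one must cover every $Q_0 \in \Max(A_y)$; circumventing this will likely require either a descent argument exploiting centrality of $\bbX$, or an alternative reduction via the two-sided ideal $\idp A \subseteq A$ combined with Noetherian induction on the support, reducing everything to the quotient $A/\idp A \twoheadrightarrow A/\idq A$ and thence to the desired $A/Q$.
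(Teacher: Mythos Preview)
Your argument for the first equality is correct, and in fact more direct than the paper's (which already invokes Gabriel's bijection at that stage). One small imprecision: for an indecomposable injective $A_y$-module $I$, the point $\alpha(\Phi_y(I))$ need not be $y$ itself but only a generization of $y$; since $Z$ is closed and $y\notin Z$, any such generization also lies in $X\setminus Z$, so your conclusion is unaffected.

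For the second claim you have chosen the hard side of Gabriel's correspondence. You use that a localizing subcategory is determined by the indecomposable injectives it \emph{contains}, and then try to exhibit each $\kJ$ with $\alpha(\kJ)\in Z$ as an object of $\sL$, built up from coherent pieces via going-up and a socle filtration. The obstacle you flag is real --- going up from primes of $A$ over $\idp$ need not reach every prime over $\idq$ --- and the workarounds you sketch would require substantial additional work, if they go through at all.

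The paper bypasses all of this by using the dual description: a localizing subcategory $\sC$ is equally determined by the set of $\kJ\in\Sp(\bbX)$ that are $\sC$-closed, i.e.\ satisfy $\Hom_{\bbX}(\kF,\kJ)=0$ for all $\kF\in\sC$. Since $\sL\subseteq\Qcoh_Z(\bbX)$, the set of $\sL$-closed injectives contains $\Sigma_Z:=\{\kJ:\alpha(\kJ)\in X\setminus Z\}$. Conversely, if $\alpha(\kJ)\in Z$ then $\alpha(\kJ)\in\overline{\{x_i\}}$ for some $i$, so Proposition~\ref{P:HomVanishingNonvanishing} gives $\Hom_{\bbX}(\kI(x_i),\kJ)\ne 0$, hence $\Hom_{\bbX}(\kE,\kJ)\ne 0$; as $\kE\in\sL$, such $\kJ$ is not $\sL$-closed. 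Thus both localizing subcategories have closed-injective set exactly $\Sigma_Z$ and therefore coincide --- a one-line consequence of Proposition~\ref{P:HomVanishingNonvanishing}, with no need to place $\kJ$ inside $\sL$ at all.
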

\begin{proof} If  $\kI \in \Sp(\bbX)$  is  such that
$x := \alpha(\kI) \in X \setminus Z$ then $\kI \cong \Phi_x(I)$ for some $I \in 
\Sp(A_x)$ and $\Hom_{\bbX}(\kF, \kI) \cong \Hom_{A_x}(\kF_x, I) = 0$ for any $\kF \in \Qcoh_{Z}(\bbX)$. Conversely, let
$\kI \in \Sp(\bbX)$ be such that $x := \alpha(\kI) \in Z$. Then $\kI \cong \Phi_x(I)$ for some $I \in \Sp(A_x)$. Let $P \in \Spec(A_x)$ be the associated prime ideal and
$\kF := \Phi_x(A_x/P)$. Then $\kF \in \Qcoh_{Z}(\bbX)$ and 
$
\Hom_{\bbX}(\kF, \kI)  \cong \Hom_{A_x}(A_x/P, I) \ne 0.
$
The first description of $\Qcoh_{Z}(\bbX)$  follows now from the correspondence between the localizing subcategories
of $\Qcoh(\bbX)$ and the subsets of $\Sp(\bbX)$; see \cite[page 377]{Gabriel}.
To prove the second statement, note that $\mathsf{Supp}(\kE) = Z$; see Lemma \ref{L:SuppInjSheaf}. Let $\sC(\kE)$ be the smallest localizing subcategory of
$\Qcoh(\bbX)$ containing $\kE$. Then it is a subcategory of $\Qcoh_Z(\bbX)$.
According to \cite[Section III.4]{Gabriel}, $\sC(\kE)$ corresponds to a certain subset $\Sigma$
of $\Sp(\bbX)$ containing $\Sigma_Z:= \left\{\kI \in \Sp(\bbX)\; \big|\; \alpha(\kI) \in X \setminus Z\right\}$. Let $\kJ \in \Sp(\bbX) \setminus \Sigma_Z$, i.e.
$\alpha(\kJ) \in Z$. Then Proposition \ref{P:HomVanishingNonvanishing} implies that
$\Hom_{\bbX}(\kE, \kJ) \ne 0$. This shows  that $\Sigma = \Sigma_Z$, hence 
$\sC(\kE) = \Qcoh_Z(\bbX)$, as asserted.
\end{proof}

\begin{theorem}\label{T:MorphismCentralSchemes} Let  $\Qcoh(\bbX) \stackrel{\Phi}\lar \Qcoh(\bbY)$ be an equivalence of categories, where $\bbX = (X, \kA)$ and $\bbY = (Y, \kB)$  are two central ncns. Then there exists a unique isomorphism of schemes
$Y \stackrel{\Phi_c}\lar X$ such that the following diagram of sets
\begin{equation}\label{E:MapCentralSchemes}
\begin{array}{c}
\xymatrix{
\Sp(\bbX) \ar[rr]^-{\widetilde{\Phi}} \ar[d]_-{\alpha_X} & & \Sp(\bbY) \ar[d]^-{\alpha_Y} \\
X  & &  \ar[ll]_-{\Phi_c} Y
}
\end{array}
\end{equation}
is commutative, where  $\widetilde{\Phi}$ is the bijection induced by $\Phi$.
\end{theorem}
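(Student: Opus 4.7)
The plan is to construct $\Phi_c$ first as a bijection of underlying sets, then upgrade it to a homeomorphism, and finally endow it with a structure sheaf isomorphism via the categorical center. All three steps are driven by the categorical characterizations already established.

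\smallskip\noindent
\emph{Step 1 (underlying set).} Since $\Phi$ is an equivalence, it induces a bijection $\widetilde{\Phi}\colon \Sp(\bbX) \to \Sp(\bbY)$. By Proposition \ref{P:PropositionReconstructionI}, for every $x \in X$ the fiber $\alpha_X^{-1}(x)$ is characterized purely in categorical terms as a maximal finite subset $\Omega \subset \Sp(\bbX)$ such that $A(\Omega) = \End_{\bbX}\bigl(\bigoplus_{\kI \in \Omega} \kI\bigr)$ is noetherian and connected. Since $\widetilde{\Phi}$ preserves endomorphism rings of finite direct sums, it sends fibers of $\alpha_X$ to fibers of $\alpha_Y$ and vice versa. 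This produces a unique bijection of sets $\Phi_c \colon Y \to X$ making (\ref{E:MapCentralSchemes}) commute, and uniqueness of $\Phi_c$ on points is automatic once it is required to be compatible with $\widetilde{\Phi}$ and surjectivity of $\alpha_X, \alpha_Y$.

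\smallskip\noindent
\emph{Step 2 (topology).} To see that $\Phi_c$ is a homeomorphism, I would use Proposition \ref{P:LocalizingSubcatViaIndInj}. Any equivalence of locally noetherian abelian categories preserves the poset of localizing subcategories and their generation, so $\Phi$ sends $\Qcoh_Z(\bbX)$ to a localizing subcategory of $\Qcoh(\bbY)$ which is the smallest one containing $\Phi\bigl(\bigoplus_{i} \kI(x_i)\bigr) \cong \bigoplus_i \kI\bigl(\Phi_c^{-1}(x_i)\bigr)$. By the same proposition, this subcategory is $\Qcoh_{Z'}(\bbY)$ for $Z' = \overline{\{\Phi_c^{-1}(x_1), \dots, \Phi_c^{-1}(x_n)\}} = \Phi_c^{-1}(Z)$. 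Thus $\Phi_c$ sends closed sets to closed sets; running the same argument for $\Phi^{-1}$ shows $\Phi_c$ is a homeomorphism.

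\smallskip\noindent
\emph{Step 3 (structure sheaf).} For every open $U \subseteq X$ with $V := \Phi_c^{-1}(U)$, the equivalence $\Phi$ descends to an equivalence of Serre quotients
\[
\Qcoh(\bbU) \cong \Qcoh(\bbX)/\Qcoh_{X \setminus U}(\bbX) \xrightarrow{\ \Phi_U\ } \Qcoh(\bbY)/\Qcoh_{Y \setminus V}(\bbY) \cong \Qcoh(\bbV),
\]
since the localizing subcategories being quotiented match up by Step~2. By Proposition \ref{P:Centers} the equivalence $\Phi_U$ induces a ring isomorphism on categorical centers, which by Proposition \ref{P:CentralSheaf} and centrality of $\bbX, \bbY$ (so $\kZ_\kA = \kO_X$ and $\kZ_\kB = \kO_Y$) is an isomorphism
\[
\Phi_U^\sharp \colon \Gamma(U, \kO_X) \xrightarrow{\ \sim\ } \Gamma(V, \kO_Y).
\]
The compatibility diagram (\ref{E:ReconstrCenter}) of Corollary \ref{C:Centers}, together with the last assertion of Proposition \ref{P:Centers} on composition of functors, shows these maps are natural in $U$, and therefore assemble into an isomorphism of sheaves of rings $\kO_X \to (\Phi_c)_\ast \kO_Y$. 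Combined with the homeomorphism of Step~2, this yields the desired scheme isomorphism $\Phi_c \colon Y \to X$. Uniqueness follows because the underlying map of sets was uniquely determined in Step~1, and the map on centers, hence on sections of the structure sheaves, is then forced by Proposition \ref{P:Centers}.

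\smallskip\noindent
The main obstacle I anticipate is technical rather than conceptual: verifying that the induced equivalences $\Phi_U$ really are well defined (i.e.\ that $\Phi$ preserves the correct localizing subcategories on the nose up to natural isomorphism) and that the center maps obtained from Proposition \ref{P:Centers} are genuinely natural with respect to restriction $V \subseteq U$. Both reduce to careful bookkeeping with Gabriel's recollement and the adjunctions $\imath^\ast \dashv \imath_\ast$, but no new ideas are needed beyond what the preceding sections supply.
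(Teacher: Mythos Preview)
Your proposal is correct and follows essentially the same three-step architecture as the paper's proof: recover the underlying set via Proposition~\ref{P:PropositionReconstructionI}, establish the topology, then glue the structure sheaf from the induced isomorphisms of categorical centers on open pieces via Corollary~\ref{C:Centers} and Proposition~\ref{P:CentralSheaf}. Steps~1 and~3 match the paper almost verbatim.

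The one substantive difference is Step~2. The paper argues continuity directly from Proposition~\ref{P:HomVanishingNonvanishing}: since $y \in \overline{\{x\}}$ is detected by $\Hom_{\bbX}\bigl(\kI(x),\kI(y)\bigr) \neq 0$, and $\Phi$ preserves such Hom-spaces, one gets $\Phi_c^{-1}\bigl(\overline{\{x\}}\bigr) = \overline{\{\Phi_c^{-1}(x)\}}$ immediately. You instead route through Proposition~\ref{P:LocalizingSubcatViaIndInj} and the correspondence $\Qcoh_Z(\bbX) \leftrightarrow \Qcoh_{Z'}(\bbY)$. This works, but your asserted equality $\overline{\{\Phi_c^{-1}(x_1),\dots,\Phi_c^{-1}(x_n)\}} = \Phi_c^{-1}(Z)$ is not automatic: it needs the observation that an indecomposable injective $\kJ \in \Sp(\bbY)$ lies in $\Qcoh_{Z'}(\bbY)$ iff $\alpha_Y(\kJ) \in Z'$ (from Lemma~\ref{L:SuppInjSheaf}), together with surjectivity of $\alpha_Y$, to conclude $Z' = \Phi_c^{-1}(Z)$. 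Once you add that sentence, your argument is complete; the paper's route via Hom-vanishing is simply more direct for this particular step.
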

\begin{proof} First note that $\alpha_X$ and $\alpha_Y$ are surjective. Hence, $\Phi_c$ is unique (even as a map of sets), provided it exists. 
According to Proposition \ref{P:PropositionReconstructionI}, points of $X$ stand
in bijection with maximal finite subsets $\Omega \subset \Sp(\bbX)$, for which 
the algebra $A(\Omega)$ is connected and noetherian (of course, a similar statement
is true for $\bbY$, too). This shows that there exists a unique bijection
$Y \stackrel{\Phi_c}\lar X$ making the diagram (\ref{E:MapCentralSchemes}) commutative.
Let $x \in X$ and $y = \Phi_c^{-1}(x)$.  Proposition \ref{P:HomVanishingNonvanishing} implies that $\Phi_c^{-1}\bigl(\overline{\left\{x\right\}}\bigr) = \overline{\left\{y\right\}}$. Hence, the map $\Phi_c$ is continuous.

\smallskip
\noindent
Let $Z \subseteq X$ be any closed subset and $W:= \varphi^{-1}(Z)$. We put: 
$U:= U \setminus Z$ and $V:= Y \setminus W$.  Then we have a commutative diagram of categories and functors
$$
\xymatrix{
\Qcoh_Z(\bbX) \ar@{_{(}->}[d] \ar[rr]^-{\Phi_{\mid}} & & \Qcoh_W(\bbY) \ar@{^{(}->}[d] \\
\Qcoh(\bbX) \ar@{->>}[d] \ar[rr]^-{\Phi} & & \Qcoh(\bbY) \ar@{->>}[d] \\
\Qcoh(\bbU) \ar[rr]^-{\Phi_U} & & \Qcoh(\bbV) 
}
$$
where $\Phi_{\mid}$ and $\Phi_U$ denote the restricted and induced equivalences of the corresponding categories. Let
$Z\bigl(\Qcoh(\bbU)\bigr) \stackrel{\psi_U}\lar  Z\bigl(\Qcoh(\bbV)\bigr)$ be the map of centers induced by $\Phi_U$. The characterization of the subcategory
$\Qcoh_Z(\bbX)$ in the terms of indecomposable injective objects (see
Proposition \ref{P:LocalizingSubcatViaIndInj}) combined with Corollary \ref{C:Centers} imply that the collection of ring isomorphisms $(\psi_U)_{U \subseteq X}$ defines
a sheaf isomorphism $\kO_X \rightarrow (\Phi_c)_\ast\kO_Y$. Hence, $Y \stackrel{\Phi_c}\lar X$ is an isomorphism of schemes, as asserted.
\end{proof}

\smallskip
\noindent
\textbf{Summary} (Reconstruction of the central scheme). Let $\bbX = (X, \kA)$ be a central ncns. 
\begin{itemize}
\item Consider the set $S = S(\bbX)$, whose elements  are maximal finite subsets $\Omega \subset
\Sp(\bbX)$ such that the algebra $A(\Omega)$ is noetherian and connected. 
\item Define the topology on $S$ by the following rules:
\begin{itemize}
\item For any  $\Omega', \Omega'' \in S(\bbX)$ we say that
 $$\Omega'' \in \overline{\{\Omega'\}} \quad \mbox{\rm if and only if}\quad 
\Hom_{\bbX}\bigl(\kI(\Omega'), \kI(\Omega'')\bigr) \ne 0.
$$
\item By definition, any non--trivial closed subset of $S$ has the form
$$
Z(\Omega_1, \dots, \Omega_n) :=  \bigcup\limits_{i = 1}^n \overline{\{\Omega_i\}},
$$
where $\Omega_1, \dots, \Omega_n \in S$ are such that $\Omega_i \notin 
\overline{\{\Omega_j\}}$ for all $1 \le i \ne j \le n$.
\end{itemize}
\item For any open subset  $U = U(\Omega_1, \dots, \Omega_n) := S \setminus Z(\Omega_1, \dots, \Omega_n)$, let $\sC(Z)$ be the smallest localizing subcategory of $\Qcoh(\bbX)$ containing $\kI(\Omega_1) \oplus \dots \oplus \kI(\Omega_n)$ and 
$
\Gamma(U, \kR) :=   Z\bigl(\Qcoh(\bbX)/\sC(Z)\bigr)
$
\item Finally, let $\widetilde{U} \subseteq U$ be a pair of open subsets, $Z := X \setminus U$,  $\widetilde{Z} := X \setminus \widetilde{U}$ and 
$\Gamma(U, \kR) \rightarrow \Gamma(\widetilde{U}, \kR)$ be the  map of centers
induced by  the localization functor $\Qcoh(\bbX)/\sC(Z) \rightarrow \Qcoh(\bbX)/\sC(\widetilde{Z}).$
\end{itemize}
Then  $S$ is a topological space and $\kR$ is a sheaf of commutative rings on 
$S$. Moreover, the  ringed spaces  $(X, \kO)$ and
$(S, \kR)$ are isomorphic. The corresponding  isomorphism of topological spaces $X \rightarrow S(\bbX)$ is given by the rule $x \mapsto \alpha^{-1}(x)$, where $\alpha$ is the map given by (\ref{E:Associator}). In other words, the underlying commutative  scheme $X$ of a central non--commutative noetherian scheme $\bbX$ can be recovered from the category $\Qcoh(\bbX)$. This provides a generalization of the  classical reconstruction result of Gabriel \cite[Section VI.3]{Gabriel} on the non--commutative setting.

\subsection{Proof of Morita theorem}
For a ncns $\bbX = (X, \kA)$ we put: $\bbX^\circ := (X, \kA^\circ)$.  Next,
$\VB(\bbX)$ (respectively, $\VB(\bbX^\circ)$) will denote the category  of coherent sheaves on $\bbX$ (respectively, $\bbX^\circ$) which are locally projective over $\kA$ (respectively, over $\kA^\circ$).

\begin{definition}
Let $\bbX$ be a ncns. Then $\kP \in \VB(\bbX^\circ)$ is a  \emph{local right progenerator} of $\bbX$ if $\add(\kP_x) = \add(A_x)$  for all $x \in X$. 
\end{definition}

\begin{theorem}\label{T:MoritaTheoremNCNS}
Let  $\Qcoh(\bbX) \stackrel{\Phi}\lar \Qcoh(\bbY)$ be an equivalence of categories, where $\bbX = (X, \kA)$ and $\bbY = (Y, \kB)$ are  central ncns. Then there exist
a pair $(\kP, \vartheta)$, where 
\begin{itemize}
\item $\kP \in \VB(\bbX^\circ)$ is a local right progenerator
\item  $\kB \stackrel{\vartheta}\lar 
\varphi^\ast\bigl(\mathit{End}_{\kA}(\kP)\bigr)$ is an isomorphism of $\kO_Y$--algebras
\end{itemize}
such that $\Phi \cong  \Phi_{\kP, \vartheta, \varphi} := 
\vartheta^{\sharp}\cdot \varphi^\ast \cdot \bigl(\kP \otimes_\kA \,-\, \bigr)$, where $\varphi = \Phi_c: Y \lar X$ is the scheme isomorphism induced by the equivalence $\Phi$ (see Theorem \ref{T:MorphismCentralSchemes}) and $\vartheta^{\sharp}$ is the equivalence of categories  induced by $\vartheta$.

\smallskip
\noindent
If $(\kP', \vartheta')$ is another pair representing $\Phi$ (i.e $\Phi \cong  \Phi_{\kP', \vartheta', \varphi}$)
then there exists a unique isomorphism $\kP \stackrel{f}\lar \kP'$ in $\VB(\bbX^\circ)$ such that the diagram
$$
\xymatrix{
& \kB \ar[ld]_-{\vartheta} \ar[rd]^-{\vartheta'} & \\
\varphi^\ast\bigl(\mathit{End}_{\kA}(\kP)\bigr) \ar[rr]^-{\varphi^\ast(Ad_f)} & & \varphi^\ast\bigl(\mathit{End}_{\kA}(\kP')\bigr)
}
$$
is commutative.

\smallskip
\noindent
Conversely, if $Y \stackrel{\varphi}\lar X$ is an isomorphism of schemes,  $\kP \in \VB(\bbX^\circ)$ is a local right progenerator and $\kB \stackrel{\vartheta}\lar 
\varphi^\ast\bigl(\mathit{End}_{\kA}(\kP)\bigr)$ is an isomorphism of $\kO_Y$--algebras
then $\Phi:= \Phi_{\kP, \vartheta, \varphi}$ is an equivalence of categories such that $\Phi_c = \varphi$.
\end{theorem}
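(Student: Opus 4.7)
The plan is to reduce to a local situation via the reconstruction theorem, apply the classical Morita theorem (Theorem \ref{T:MoritaClassical}) on each affine chart, and then glue, using the central structure to make the gluing canonical.

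First I would use Theorem \ref{T:MorphismCentralSchemes} to produce $\varphi = \Phi_c \colon Y \to X$ and replace $\bbY$ by the Morita equivalent central ncns $\varphi_\ast \bbY = (X, \varphi_\ast \kB)$; this reduces to the case $X = Y$, $\varphi = \id_X$, and $\Phi$ is then \emph{central} in the sense of Remark \ref{R:Pseudocommutative} (on each open $U$, the induced map of categorical centers is the identity of $\kO(U)$, by Corollary \ref{C:Centers}). By the characterization of $\Qcoh_{X\setminus U}(\bbX)$ as the smallest localizing subcategory containing the appropriate indecomposable injectives (Proposition \ref{P:LocalizingSubcatViaIndInj}), the commutativity of (\ref{E:MapCentralSchemes}) implies that $\Phi$ sends $\Qcoh_{X \setminus U}(\bbX)$ to $\Qcoh_{X\setminus U}(\bbY)$ for every open $U \subseteq X$. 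Hence $\Phi$ descends to an equivalence $\Phi_U \colon \Qcoh(\bbU) \to \Qcoh(\bbU\text{-on-}\bbY)$ on every open subset, compatible with further restriction.

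Next, fix an affine open $U = \Spec(R)$ and set $A = \kA(U)$, $B = \kB(U)$. By Theorem \ref{T:MoritaClassical}, $\Phi_U \cong P_U \otimes_A -$ for an essentially unique $(B,A)$--Morita bimodule $P_U$; by Lemma \ref{L:MoritaCentralMap} together with centrality of $\Phi_U$, the left and right $R$--actions on $P_U$ coincide, so $P_U$ is naturally an $R$--bimodule, hence a coherent $(\kB|_U, \kA|_U)$--bimodule $\widetilde{P_U}$. For a smaller affine open $V \subseteq U$, naturality of $\Phi$ with respect to the localization functors $\Qcoh(\bbU) \to \Qcoh(\bbV)$ (which on modules is $\kO(V) \otimes_R -$) together with the uniqueness clause of Theorem \ref{T:MoritaClassical} yields a canonical isomorphism
\[
\kO(V) \otimes_R P_U \;\stackrel{\sim}{\lar}\; P_V
\]
of $(B(V), A(V))$--bimodules. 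These canonical isomorphisms are transitive under inclusions $W \subseteq V \subseteq U$ (again by uniqueness), so the bimodules $\widetilde{P_U}$ glue to a coherent $(\kB,\kA)$--bimodule $\kP$.

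Now I would extract the required data. Each $P_U$ is a right $A$--progenerator, so for any $x \in U$ the stalk $\kP_x = (P_U)_{\idp_x}$ satisfies $\add(\kP_x) = \add(A_x)$, making $\kP$ a local right progenerator of $\bbX^\circ$. The balancing isomorphisms $B \to \End_A(P_U)$ are natural in $U$ (again by uniqueness) and hence glue to an isomorphism $\vartheta \colon \kB \to \mathit{End}_\kA(\kP)$ of sheaves of $\kO_X$--algebras. Translating back through $\varphi$ gives the data $(\kP, \vartheta)$ of the theorem, and $\Phi \cong \Phi_{\kP, \vartheta, \varphi}$ holds because it does so on each affine chart.

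For uniqueness, a second pair $(\kP', \vartheta')$ representing $\Phi$ yields, for every affine $U$, a canonical isomorphism $f_U \colon P_U \to P'_U$ of $(B,A)$--bimodules from Theorem \ref{T:MoritaClassical}, intertwining $\vartheta|_U$ and $\vartheta'|_U$ via $\mathsf{Ad}_{f_U}$; uniqueness forces the $f_U$ to agree on intersections, producing the required global $f$. The converse is routine: $\kP \otimes_\kA -$ is locally the classical Morita equivalence attached to $P_U$, and $\varphi^\ast$ and $\vartheta^\sharp$ are equivalences, so the composite $\Phi_{\kP,\vartheta,\varphi}$ is an equivalence whose induced map on centers is $\varphi$ by Lemma \ref{L:MoritaCentralMap}.

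The main obstacle is step three: verifying that the locally defined Morita bimodules $P_U$ assemble into a genuine sheaf in a canonical way. Centrality of $\Phi$ (a consequence of replacing $\bbY$ by $\varphi_\ast \bbY$) is what forces the $R$--bimodule structure on $P_U$ to be symmetric, and the uniqueness clause of the classical Morita theorem is what pins down the transition isomorphisms and makes them transitive; without both of these inputs, one would only obtain local data and no global sheaf.
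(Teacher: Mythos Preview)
Your proposal is correct and follows essentially the same route as the paper: reduce via $\varphi = \Phi_c$ to a \emph{central} equivalence over a common base $X$, use that centrality forces $\Phi$ to preserve the localizing subcategories $\Qcoh_{X\setminus U}$ so that $\Phi$ descends compatibly to all opens, apply the classical Morita theorem on each affine $U$ to get a central $(B,A)$--bimodule $P_U$, and then use the uniqueness clause of Theorem~\ref{T:MoritaClassical} to obtain canonical, transitive transition isomorphisms that glue the $P_U$ into the global $(\kB,\kA)$--bimodule $\kP$. The paper carries out exactly this plan, making the restriction functors $\Phi_U := \imath^\ast \Phi \imath_\ast$ and the transition maps $\sigma^U_V$ explicit, but there is no substantive difference in strategy.
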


\begin{proof} The last part of the theorem  is obvious. Hence, it is sufficient to prove the following 

\smallskip
\noindent
\underline{Statement}. Let  $\Qcoh(\bbX) \stackrel{\Phi}\lar \Qcoh(\widetilde\bbX)$ be a central equivalence of categories, where  $\bbX = (X, \kA)$ and $\widetilde\bbX = (X, \widetilde\kA)$ are  two central ncns with the same underlying commutative scheme $X$. Then $\Phi \cong  \kP \otimes_\kA \,-$, where $\kP \in  \VB(\bbX^\circ)$
is a balanced central $(\widetilde\kA-\kA)$--bimodule  which is a local right progenerator of $\bbX$. 

\smallskip
\noindent
\underline{Claim}. 
For any open subset 
 $U \stackrel{\imath}\xhar X$ put:  $
 \Phi_U:= \imath^\ast \Phi \imath_\ast: \Qcoh(\bbU) \lar
\Qcoh(\widetilde\bbU).$
Then $\Phi_U$ is an equivalence of categories and in the  following diagram of categories and functors
\begin{equation}\label{E:inducedEq}
\begin{array}{c}
\xymatrix{
\Qcoh(\bbX) \ar[rr]^-{\imath^\ast} \ar[d]_-{\Phi} & & \Qcoh(\bbU)
\ar[d]^-{\Phi_U} \\
\Qcoh(\widetilde{\bbX}) \ar[rr]^-{\imath^\ast} & & \Qcoh(\widetilde\bbU)
}
\end{array}
\end{equation}
both compositions of functors are isomorphic. 

\smallskip
\noindent
Indeed, let $Z:= X \setminus U$. Then 
$\Phi$ restricts to  an equivalence of the categories 
$\Qcoh_Z(\bbX) \rightarrow \Qcoh_Z(\widetilde{\bbX})$ (at this place, we use \emph{centrality} of $\Phi$). The universal property of the Serre quotient category implies that there exists an equivalence of categories $\Qcoh(\bbU) \stackrel{\Psi_U}\lar \Qcoh(\widetilde\bbU)$ such that $\Psi_U \imath^\ast \cong  \imath^\ast \Phi$.
Since $\imath^\ast \imath_\ast =  \mathsf{Id}_{\bbU}$, we conclude that $\Phi_U \cong \Psi_U$, hence $\Phi_U$ is an equivalence of categories.  One can check that the natural transformation 
$
\imath^\ast \Phi \stackrel{\zeta_U}\lar \Phi_U \imath^\ast,
$
induced by the adjunction unit $\mathsf{Id}_{\bbX} \rightarrow \imath_\ast \imath^\ast$, is an isomorphism. This proves the claim.

\smallskip
\noindent
Let $V \stackrel{\varepsilon}\xhar U$ be an open subset and 
$\jmath:= \imath \varepsilon$. Since $\imath_\ast \varepsilon_\ast = \jmath_\ast$ and
$\varepsilon^\ast \imath^\ast = \jmath^\ast$, we conclude:
\begin{equation}\label{E:EquivRestricted}
\Phi_V = \varepsilon^\ast \Phi_U \varepsilon_\ast.
\end{equation}

\smallskip
\noindent
Assume now that $U$ is affine. Then there exists a central $(\widetilde\kA_U-\kA_U)$--Morita bimodule $\kP^U \in \VB(\bbU^\circ)$ and an isomorphism of functors
$
\Phi_U \stackrel{\xi_U}\lar \kP^U \otimes_{\kA_U} \,-\,.
$
Then for  any $\kG \in \Qcoh(\bbV)$, we get natural isomorphisms
\begin{equation}\label{E:SomeNatTransf}
\varepsilon^\ast \Phi_U \varepsilon_\ast(\kG) \lar 
\varepsilon^\ast\bigl(\kP^U \otimes_{\kA_U} \varepsilon_\ast(\kG)\bigr) \lar
\kP^U\Big|_{V} \otimes_{\kA_V} \kG,
\end{equation}
where we use that $\varepsilon^\ast \varepsilon_\ast = \mathsf{Id}_{\bbV}$.

\smallskip
\noindent
Let $\kP^U\big|_{V} \otimes_{\kA_V} \,-\, \xrightarrow{\widetilde{\sigma}^U_V}
\kP^V \otimes_{\kA_V} \,-\,$ be the unique isomorphism  of functors making the following diagram of functors and natural transformations
\begin{equation}\label{E:MoritaCompat}
\begin{array}{c}
\xymatrix{
\varepsilon^\ast \Phi_U \varepsilon_\ast \ar[rr]^-{\xi_U\big|_V} \ar[d]_{=} & & \kP^U\big|_{V} \otimes_{\kA_V} \,-\,  \ar[d]^-{\widetilde{\sigma}^U_V} \\
\Phi_V \ar[rr]^-{\xi_V} & & \kP^V \otimes_{\kA_V} \,-\,
}
\end{array}
\end{equation}
commutative, where $\xi_U\big|_V$ is the isomorphism of functors defined by (\ref{E:SomeNatTransf}). According to Theorem \ref{T:MoritaClassical} (classical Morita theorem for rings), there exists a uniquely determined  isomorphism of $(\widetilde\kA_V-\kA_V)$--bimodules 
$
\kP^U\big|_{V} \stackrel{\sigma^U_V}\lar \kP^V,
$
which induces the natural transformation $\widetilde{\sigma}^U_V$. It follows from 
(\ref{E:EquivRestricted}) and (\ref{E:MoritaCompat}) that for any triple
$W \subseteq V \subseteq U$ of open affine subsets of $X$, the following diagram
$$
\xymatrix{
\bigl(\kP^U\big|_V\bigr)\Big|_{W} \ar[rr]^-{(\sigma^U_V)\big|_{W}} \ar[d]_= & & 
\kP^V\big|_{W} \ar[d]^-{\sigma^V_W} \\
\kP^U\big|_W \ar[rr]^-{\sigma^U_W} & & \kP^W
}
$$
is commutative. Hence, there exists an $(\widetilde\kA-\kA)$--bimodule $\kP$ 
and a family of isomorphisms of $(\widetilde\kA_U-\kA_U)$--bimodules $\kP^U \stackrel{\varphi_U}\lar \kP\big|_U$ (for any $U \subseteq X$ open and affine) such that
$$
\xymatrix{
\kP^U\big|_V  \ar[rr]^-{(\varphi_U)\big|_{V}} \ar[d]_-{\sigma^U_V} & & 
  \bigl(\kP\big|_{U}\bigr)\big|_{V} \ar[d]^-{=}\\
\kP^V  \ar[rr]^-{\varphi_V} & & \kP\big|_{V}
}
$$
is commutative for any pair of open affine subsets $V \subseteq U$ of $X$. It is clear  that
$\kP$ is a central balanced $(\widetilde\kA-\kA)$--bimodule such that
$\add(\kP_x) = \add(A_x)$ for any $x \in X$. Moreover, the datum 
$\bigl(\kP, (\varphi_U)_{U \subseteq X}\bigr)$ is unique up to a unique isomorphism. 

\smallskip
\noindent
Finally, for any $\kF \in \Qcoh(\bbX)$ and $U \subseteq X$ open and affine, we have an isomorphism $\Phi(\kF)\big|_{U} \lar \bigl(\kP \otimes_\kA \kF\bigr)\big|_U$ defined as the composition
$$
\Phi(\kF)\big|_{U} \stackrel{\zeta^\kF_U}\lar \Phi_U\bigl(\kF\big|_U\bigr)
\xrightarrow{\xi_U^{\kF\big|_U}} \kP^U \otimes_{\kA_U} \kF\big|_U 
\xrightarrow{\varphi_U \otimes \mathsf{id}} \kP\big|_U \otimes_{\kA_U} \kF\big|_U 
\stackrel{\mathsf{can}}\lar \bigl(\kP \otimes_\kA \kF\bigr)\big|_U.
$$
It follows that these isomorphisms are compatible with restrictions on open affine subsets and define a global isomorphism of left $\widetilde\kA$--modules
$\Phi(\kF) \stackrel{\vartheta^\kF}\lar \kP \otimes_\kA \kF$, which is natural in $\kF$. Hence, we have constructed an isomorphism of functors  $\Phi \cong  \kP \otimes_\kA \,-$ we were looking for. The uniqueness of $\kP$ follows from the corresponding  result in the  affine case. 
\end{proof}

\begin{remark}
In the case when $\bbX = (X, \kA)$ and $\bbY = (Y, \kB)$ are ncns with $X$ and $Y$ 
being  locally of finite type over a field $\kk$, some related results about  equivalences between the categories $\Coh(\bbX)$ and $\Coh(\bbY)$ can also be found in \cite[Section 6]{ArtinZhang}.
\end{remark}

\section{C\u{a}ld\u{a}raru's conjecture on Azumaya algebras on noetherian schemes}

\noindent
Let $X$ be a  noetherian scheme and $\kA$ be a sheaf of $\kO$--algebras, which is a locally free coherent sheaf of finite rank on $X$. Then we have a canonical morphism of $\kO$--algebras 
$
\kA \otimes_\kO \kA^\circ \stackrel{\mu}\lar \mathit{End}_{\kO}(\kA),
$
given on the level of local sections by the rule $a \otimes b \mapsto (c \mapsto a cb)$. Recall that $\kA$ is an \emph{Azumaya algebra} on $X$ if $\mu$ is an isomorphism. This  is equivalent to the condition that $\kA\big|_x := A_x \otimes_{O_x} \bigl(O_x/\idm_x\bigr)$  is a central simple $O_x/\idm_x$--algebra for any point $x \in X$; see \cite[Proposition IV.2.1]{Milne}. It follows that an Azumaya algebra on $X$ is automatically central. Moreover, for any pair of Azumaya algebras $\kA_1$ and  
$\kA_2$ on $X$, their tensor product $\kA_1 \otimes_\kO \kA_2$ is again an Azumaya
algebra.

\smallskip
\noindent
Let $\kA$ be an Azumaya algebra on $X$, $\bbX = (X, \kA)$ and $\kP \in \VB(\bbX^\circ)$ be a local right progenerator of $\bbX$. Then $\widetilde\kA := \mathit{End}_\kA(\kP)$ is again an Azumaya algebra on $X$ (since $\widetilde{\kA}\big|_x$ is again a central simple $O_x/\idm_x$--algebra algebra for any $x \in X$). If 
$\widetilde{\bbX} = (X, \widetilde\kA)$ then 
$$
\Qcoh(\bbX) \xrightarrow{\kP \otimes_\kA \,-\,} \Qcoh(\widetilde\bbX)
$$
is a central equivalence of categories. 

\smallskip
\noindent
Let  $\kA$ and $\kB$ be two Azumaya algebras on $X$. We put:  $\bbX = (X, \kA)$ and $
\widetilde\bbX = (X, \kB)$. 
\begin{itemize}
\item $\kA$ and $\kB$ are centrally Morita equivalent (denoted  $\kA \approx \kB$) if there exists a central  equivalence of categories 
$\Qcoh(\bbX) \rightarrow \Qcoh(\widetilde\bbX)$
\item $\kA$ and $\kB$ are  \emph{equivalent} (denoted $\kA \sim \kB$) provided there exist $\kF, \kG \in \VB(X)$ such that 
$\mathit{End}_{\kO}(\kF) \otimes_\kO \kA$ and $\mathit{End}_{\kO}(\kG)
\otimes_\kO \kB$ are isomorphic as  $\kO$--algebras.
\end{itemize}
 The second relation  is indeed an equivalence relation. The set $\mathsf{Br}(X)$ of equivalence classes of Azumaya algebras on $X$, endowed with the operation
$$
[\kA_1] + [\kA_2] := \bigl[\kA_1 \otimes_\kO \kA_2\bigr]
$$
is a commutative group   (called \emph{Brauer group} of the scheme $X$). Recall that 
 the class $[\kO]$  is the neutral element of $\mathsf{Br}(X)$, whereas 
 $-[\kA] = [\kA^\circ]$;  see \cite[Section IV.2]{Milne}. 

\begin{lemma}\label{L:LemmaAzumayaEq}
Let $\kA, \kB$ be Azumaya algebras on $X$ such that $\kA \approx \kB$. Then for any Azumaya algebra $\kC$ on $X$ we have:
$\kA \otimes_\kO \kC \approx \kB \otimes_\kO \kC$.
\end{lemma}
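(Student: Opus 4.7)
The plan is to apply the converse direction of Theorem~\ref{T:MoritaTheoremNCNS}. Its forward direction, applied to the hypothesis $\kA \approx \kB$, produces a local right progenerator $\kP \in \VB(\bbX^\circ)$ (where $\bbX = (X, \kA)$) together with an isomorphism of $\kO$-algebras $\vartheta \colon \kB \to \mathit{End}_\kA(\kP)$. From this datum I will construct analogous data for the pair $(\kA \otimes_\kO \kC, \kB \otimes_\kO \kC)$, and then the converse part of the Morita theorem supplies the desired central equivalence.

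I would set $\kP' := \kP \otimes_\kO \kC$, equipped with the evident right $\kA \otimes_\kO \kC$-action determined locally by $(p \otimes c)(a \otimes c') := pa \otimes cc'$. Since $\kP$ is coherent and locally projective over $\kA^\circ$, and $\kC$ is locally free of finite rank over $\kO$, the sheaf $\kP'$ is coherent and locally projective over $(\kA \otimes_\kO \kC)^\circ$, so $\kP' \in \VB\bigl((X, \kA \otimes_\kO \kC)^\circ\bigr)$. The local progenerator condition is immediate on stalks: the identity $\add(\kP_x) = \add(\kA_x)$, upon applying the exact, coproduct-preserving functor $- \otimes_{\kO_x} \kC_x$, yields $\add(\kP'_x) = \add\bigl((\kA \otimes_\kO \kC)_x\bigr)$.

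Next I would combine $\vartheta \otimes \id_\kC$ with the canonical $\kO$-algebra morphism
$$
\mathit{End}_\kA(\kP) \otimes_\kO \kC \longrightarrow \mathit{End}_{\kA \otimes_\kO \kC}(\kP \otimes_\kO \kC),
$$
where $\kC$ is identified with $\mathit{End}_\kC(\kC)$ via left multiplication (sending $c$ to the map $c' \mapsto cc'$). This produces a candidate algebra morphism $\vartheta' \colon \kB \otimes_\kO \kC \to \mathit{End}_{\kA \otimes_\kO \kC}(\kP')$. The principal (and essentially only) technical point is that $\vartheta'$ is an isomorphism, which is a local question. Over each stalk, writing $R = \kO_x$, $A = \kA_x$, $C = \kC_x$, $P = \kP_x$, the module $P$ is a finitely generated projective $A$-module, so a direct summand of some $A^n$; for $P = A^n$ the claim reduces to the routine identity $\mathit{End}_A(A^n) \otimes_R C \cong \mathit{End}_{A \otimes_R C}\bigl((A \otimes_R C)^n\bigr)$, and the general case follows by passing to direct summands.

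With $(\kP', \vartheta')$ in hand, the converse direction of Theorem~\ref{T:MoritaTheoremNCNS}, applied with $Y = X$ and $\varphi = \id_X$, furnishes a central equivalence $\Qcoh(X, \kA \otimes_\kO \kC) \to \Qcoh(X, \kB \otimes_\kO \kC)$, which is exactly the conclusion $\kA \otimes_\kO \kC \approx \kB \otimes_\kO \kC$. The hard part is the stalk-wise identification of endomorphism algebras above; everything else is formal bookkeeping.
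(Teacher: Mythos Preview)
Your proposal is correct and follows essentially the same route as the paper: tensor the local right progenerator $\kP$ with $\kC$, compose $\vartheta \otimes \id_{\kC}$ with the canonical map $\mathit{End}_{\kA}(\kP) \otimes_{\kO} \kC \to \mathit{End}_{\kA \otimes_{\kO} \kC}(\kP \otimes_{\kO} \kC)$, and invoke the converse of the Morita theorem. You supply more detail than the paper does on why that canonical map is an isomorphism (the stalk--level reduction to $P$ a direct summand of $A^n$), but the argument is the same.
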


\begin{proof}
Since $\kA \approx \kB$, there exists a local right progenerator $\kP$ for $\kA$ and an isomorphism of $\kO$--algebras $\kB \stackrel{\vartheta}\lar \mathit{End}_\kA(\kP)$. Then we get an induced isomorphism of $\kO$--algebras $\kB \otimes_\kO \kC 
\stackrel{\tilde\vartheta}\lar \mathit{End}_{\kA\otimes_\kO \kC}(\kP\otimes_\kO \kC)$ given as the composition
$$
\kB \otimes_\kO \kC \xrightarrow{\vartheta \otimes \mathsf{id}} \mathit{End}_\kA(\kP)
\otimes_\kO \kC \lar \mathit{End}_{\kA\otimes_\kO \kC}(\kP\otimes_\kO \kC).
$$
Note  that $\kP\otimes_\kO \kC$ is a local right progenerator for $\kA\otimes_\kO \kC$. Moreover, $\kO$ is the center of $\kA \otimes_\kO \kC$ and 
 $\kB \otimes_\kO \kC$ (since they both are Azumaya algebras), implying  the statement.
\end{proof}

\smallskip
\noindent
The proof of the following result is basically a replica of 
\cite[Theorem 1.3.15]{CaldararuThesis}.

\begin{proposition}\label{P:Azumaya}
Let $\kA$ and $\kB$ be two Azumaya algebras on $X$. Then we have: 
$$
\kA \sim \kB \quad \mbox{\rm if and only if} \quad \kA \approx \kB.
$$
\end{proposition}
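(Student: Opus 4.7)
\smallskip
\noindent
The plan is to establish both implications via Theorem \ref{T:MoritaTheoremNCNS}, using the defining identity $\kA \otimes_{\kO} \kA^{\circ} \cong \mathit{End}_{\kO}(\kA)$ of an Azumaya algebra to bridge central Morita equivalence and Brauer equivalence.

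\smallskip
\noindent
For the direction $\sim \Rightarrow \approx$, suppose one is given an algebra isomorphism $\mathit{End}_{\kO}(\kF) \otimes_{\kO} \kA \cong \mathit{End}_{\kO}(\kG) \otimes_{\kO} \kB$ with $\kF, \kG \in \VB(X)$. I would first observe that $\kF \otimes_{\kO} \kA$ is a local right progenerator of $\bbX$ (locally, $\kF \cong \kO^{n}$ makes it isomorphic to $\kA^{n}$), with sheaf of right $\kA$-linear endomorphisms equal to $\mathit{End}_{\kO}(\kF) \otimes_{\kO} \kA$. Theorem \ref{T:MoritaTheoremNCNS} applied with $\varphi = \id_{X}$ then produces a central equivalence $\Qcoh(X, \kA) \to \Qcoh\bigl(X, \mathit{End}_{\kO}(\kF) \otimes_{\kO} \kA\bigr)$; the same holds for $\kB$ and $\kG$. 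Composing these two equivalences with the one induced by the given algebra isomorphism yields $\kA \approx \kB$; alternatively, Lemma \ref{L:LemmaAzumayaEq} can be invoked at the end to reach the same conclusion.

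\smallskip
\noindent
For the direction $\approx \Rightarrow \sim$, Theorem \ref{T:MoritaTheoremNCNS} supplies a local right progenerator $\kP \in \VB(\bbX^{\circ})$ together with an isomorphism $\kB \xrightarrow{\vartheta} \mathit{End}_{\kA}(\kP)$. Since $\kA$ is locally free over $\kO$ and $\kP$ is locally a direct summand of a finite power of $\kA$, the sheaf $\kP$ itself lies in $\VB(X)$, so $\mathit{End}_{\kO}(\kP)$ is defined. The decisive step is to prove that the natural $\kO$-algebra map
$$
\Theta \colon \kB \otimes_{\kO} \kA^{\circ} \lar \mathit{End}_{\kO}(\kP), \qquad (b, a^{\circ}) \longmapsto \bigl(p \mapsto \vartheta(b)(p) \cdot a\bigr)
$$
is an isomorphism. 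Granting this, one tensors both sides with $\kA$ over $\kO$ and invokes the Azumaya identity $\kA^{\circ} \otimes_{\kO} \kA \cong \mathit{End}_{\kO}(\kA)$ to obtain $\kB \otimes_{\kO} \mathit{End}_{\kO}(\kA) \cong \mathit{End}_{\kO}(\kP) \otimes_{\kO} \kA$, which is precisely the relation $\kA \sim \kB$ with $\kF := \kP$ and $\kG := \kA$.

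\smallskip
\noindent
The main obstacle is the verification that $\Theta$ is an isomorphism. Being a morphism of coherent sheaves on a noetherian scheme, this is a stalk-wise statement and reduces to the classical fact that for a central Azumaya $R$-algebra $A$ over a commutative local ring $R$, a right $A$-progenerator $P$, and $B = \End_{A}(P)$, the canonical map $B \otimes_{R} A^{\circ} \to \End_{R}(P)$ is an algebra isomorphism. Both sides are finite projective $R$-modules of the same rank, and injectivity after reduction modulo the maximal ideal $\idm_{R}$ follows because $A/\idm_{R} A$ remains central simple over $R/\idm_{R}$ and $P/\idm_{R} P$ remains a faithful progenerator for it; Nakayama's lemma then promotes this to a stalk-wise isomorphism. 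All the remaining steps in the argument are routine globalisations already made available by the Morita theorem \ref{T:MoritaTheoremNCNS}.
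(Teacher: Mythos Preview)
Your argument for $\sim \Rightarrow \approx$ is exactly the paper's. For $\approx \Rightarrow \sim$ your route is correct but differs from the paper's. The paper does not work directly with the Morita bimodule $\kP$; instead it invokes Lemma~\ref{L:LemmaAzumayaEq} to tensor the given central equivalence $\kA \approx \kB$ by $\kA^{\circ}$, obtaining $\kO \approx \mathit{End}_{\kO}(\kA) \cong \kA \otimes_{\kO} \kA^{\circ} \approx \kB \otimes_{\kO} \kA^{\circ}$, and then reads off from Theorem~\ref{T:MoritaTheoremNCNS} (applied to the trivial Azumaya algebra $\kO$) a vector bundle $\kF$ with $\mathit{End}_{\kO}(\kF) \cong \kB \otimes_{\kO} \kA^{\circ}$. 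You instead identify $\kF$ explicitly as $\kP$ and verify the isomorphism $\kB \otimes_{\kO} \kA^{\circ} \cong \mathit{End}_{\kO}(\kP)$ by a stalk-wise rank-and-Nakayama argument. The paper's approach is slicker in that the tensor-stability Lemma~\ref{L:LemmaAzumayaEq} absorbs the local computation; yours is more transparent because it names the witnessing bundle concretely and makes the role of the central-simplicity of the fibres (hence of the Azumaya hypothesis) visible at the point where it is actually used. Both arguments conclude identically by tensoring with $\kA$ and applying $\kA^{\circ} \otimes_{\kO} \kA \cong \mathit{End}_{\kO}(\kA)$.
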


\begin{proof} Let $\bbX = (X, \kA)$ and $\bbY = (X, \kB)$
If $\kA \sim \kB$ then there exist $\kF, \kG \in \VB(X)$ and an isomorphism of $\kO$--algebras
$
\mathit{End}_{\kO}(\kF) \otimes_\kO \kA \stackrel{\vartheta}\lar \mathit{End}_{\kO}(\kG)
\otimes_\kO \kB.
$
Let $\kP := \kF \otimes_\kO \kA$ and $\kQ := \kG \otimes_\kO \kB$. Then $\kP$ is a local right progenerator for $\kA$ and $\kQ$ is a local right progenerator for $\kB$.
Let $\widetilde\kA:= \mathit{End}_{\kA}(\kP)$ and $\widetilde\kB:= \mathit{End}_{\kB}(\kQ)$. Then we have  isomorphisms of $\kO$--algebras 
$\widetilde\kA \cong \mathit{End}_{\kO}(\kF) \otimes_\kO \kA$ and 
$\widetilde\kB \cong \mathit{End}_{\kO}(\kG) \otimes_\kO \kB$ as well as central equivalences of categories
$$
\Qcoh(\bbX) \xrightarrow{\kP \otimes_\kA \,-\,} \Qcoh(\widetilde\bbX)
\xleftarrow{\vartheta^\sharp} \Qcoh(\widetilde\bbY) 
\xleftarrow{\kQ \otimes_\kB \,-\,}
\Qcoh(\bbY),
$$
where $\widetilde\bbX = (X, \widetilde\kA)$ and 
$\widetilde\bbY = (X, \widetilde\kB)$. Hence, $\kA \approx \kB$.

\smallskip
\noindent
Conversely, assume that $\kA \approx \kB$. Then we have:
$$
\kO \approx \mathit{End}_{\kO}(\kA) \cong \kA \otimes_\kO \kA^\circ \approx \kB \otimes_\kO \kA^\circ,
$$
where the last central equivalence exists by Lemma \ref{L:LemmaAzumayaEq}. Hence, there exists $\kF \in \VB(X)$ and an isomorphism of $\kO$--algebras
$\mathit{End}_{\kO}(\kF) \stackrel{\vartheta}\lar \kB \otimes_\kO  \kA^\circ$. Then we get the following induced isomorphism of $\kO$--algebras:
$$
\mathit{End}_{\kO}(\kF) \otimes_\kO \kA \xrightarrow{\vartheta \otimes \mathsf{id}}
\kB \otimes_\kO \kA^\circ \otimes_\kO \kA \stackrel{\mathsf{can}}\lar 
\kA \otimes_\kO \kA^\circ \otimes_\kO \kB \xrightarrow{\mu \otimes \mathsf{id}} \mathit{End}_{\kO}(\kA) \otimes_\kO \kB.
$$
Hence, $\kA \sim \kB$, as asserted. 
\end{proof}

\begin{theorem}\label{T:CaldararuConjecture}
Let $X$ and $Y$ be two separated noetherian schemes, $\kA$ be an Azumaya algebra on $X$, $\kB$ be an Azumaya algebra on $Y$, $\bbX = (X, \kA)$ and $\bbY = (Y, \kB)$.
Then the categories $\Qcoh(\bbX)$ and $\Qcoh(\bbY)$ are equivalent if and only if there exists an isomorphism of schemes $Y \stackrel{f}\lar X$ such that
$f^\ast\bigl([\kA]\bigr) = [\kB] \in \mathsf{Br}(Y)$. 
\end{theorem}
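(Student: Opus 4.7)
The plan is to combine the Morita theorem for central ncns (Theorem~\ref{T:MoritaTheoremNCNS}), the reconstruction of the central scheme from $\Qcoh(\bbX)$ (Theorem~\ref{T:MorphismCentralSchemes}), and Proposition~\ref{P:Azumaya}, which equates the relations $\sim$ and $\approx$ on Azumaya algebras. Since every Azumaya algebra is automatically central, both $\bbX$ and $\bbY$ are central ncns, so all machinery of the preceding section applies without modification.

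For the direction $(\Leftarrow)$, I would start from an isomorphism $f\colon Y \to X$ satisfying $f^\ast[\kA] = [\kB]$ in $\mathsf{Br}(Y)$. The pullback $f^\ast\kA$ is an Azumaya algebra on $Y$ representing the class $f^\ast[\kA]$, so $f^\ast\kA \sim \kB$, and Proposition~\ref{P:Azumaya} upgrades this to $f^\ast\kA \approx \kB$. This produces a central equivalence $\Qcoh(Y, f^\ast\kA) \to \Qcoh(\bbY)$, which I would compose with the canonical equivalence $\Qcoh(\bbX) \to \Qcoh(Y, f^\ast\kA)$ induced by the isomorphism of ringed spaces $f$ to obtain the desired equivalence of categories.

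For the direction $(\Rightarrow)$, starting from an equivalence $\Qcoh(\bbX) \stackrel{\Phi}\lar \Qcoh(\bbY)$, Theorem~\ref{T:MorphismCentralSchemes} will provide an isomorphism of schemes $f := \Phi_c\colon Y \to X$, and Theorem~\ref{T:MoritaTheoremNCNS} will supply a local right progenerator $\kP \in \VB(\bbX^\circ)$ together with an isomorphism of $\kO_Y$--algebras $\vartheta\colon \kB \to f^\ast\bigl(\mathit{End}_{\kA}(\kP)\bigr)$. As observed in the paragraph preceding Lemma~\ref{L:LemmaAzumayaEq}, $\widetilde\kA := \mathit{End}_{\kA}(\kP)$ is again an Azumaya algebra on $X$, and the functor $\kP \otimes_\kA -$ realizes a central equivalence between $\Qcoh(\bbX)$ and $\Qcoh(X, \widetilde\kA)$; hence $\kA \approx \widetilde\kA$. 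Proposition~\ref{P:Azumaya} then yields $\kA \sim \widetilde\kA$, so $[\kA] = [\widetilde\kA]$ in $\mathsf{Br}(X)$. Pulling back along $f$ and using $\kB \cong f^\ast\widetilde\kA$ gives $[\kB] = f^\ast[\widetilde\kA] = f^\ast[\kA]$, as required.

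I do not anticipate a serious new obstacle: the substantive content has already been absorbed into the Morita theorem, the reconstruction theorem, and Proposition~\ref{P:Azumaya}. The only point requiring explicit care will be the compatibility of the pullback $f^\ast$ on sheaves of Azumaya algebras with the Brauer-class operation $f^\ast\colon \mathsf{Br}(X) \to \mathsf{Br}(Y)$, i.e.~the equalities $f^\ast[\widetilde\kA] = [f^\ast\widetilde\kA]$ and the preservation of the relation $\sim$ under $f^\ast$; this is a formal consequence of the definition of $f^\ast$ on sheaves of $\kO$--algebras under an isomorphism of schemes, since $f^\ast$ commutes with $\otimes_\kO$ and with $\mathit{End}_\kO(-)$ on locally free coherent sheaves.
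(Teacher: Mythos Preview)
Your proposal is correct and follows essentially the same route as the paper: both directions rest on Proposition~\ref{P:Azumaya} together with the reconstruction of the central scheme, and the forward direction uses the Morita theorem for central ncns. The paper's proof is slightly terser---it passes directly to ``$\kB \approx f^\ast(\kA)$'' rather than going through $\widetilde\kA = \mathit{End}_\kA(\kP)$ and then pulling back---but this is only a cosmetic reordering of the same ingredients.
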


\begin{proof}
If $Y \stackrel{f}\lar X$ is such that
$f^\ast\bigl([\kA]\bigr) = [\kB] \in \mathsf{Br}(X)$ then equivalence of $\Qcoh(\bbX)$ and $\Qcoh(\bbY)$ is a consequence of Proposition \ref{P:Azumaya}.

\smallskip
\noindent
Conversely, let $\Qcoh(\bbX) \stackrel{\Phi}\lar \Qcoh(\bbY)$ be an equivalence of categories. Since both ncns $\bbX$ and $\bbY$ are central, Theorem \ref{T:MorphismCentralSchemes} yields  an induced isomorphism of schemes 
$Y \stackrel{f}\lar X$, where $f = \Phi_c$. It follows that $\kB \approx f^\ast(\kA)$. By Proposition \ref{P:Azumaya} we get: $\kB \sim f^\ast(\kA)$.
\end{proof}

\begin{remark}
Theorem \ref{T:CaldararuConjecture} was conjectured by C\u{a}ld\u{a}raru in 
\cite[Conjecture 1.3.17]{CaldararuCrelle}. In the case of regular projective varieties over a field, it was proved by Canonaco and Stellari \cite[Corollary 5.3]{CanonacoStellari}. In the full generality, C\u{a}ld\u{a}raru's conjecture was proven by  Antieau \cite[Theorem 1.1]{Antieau}, based on a previous work of Perego \cite{Perego} and the theory of derived Azumaya algebras of To\"en \cite{Toen}. In our opinion, the given proof of C\u{a}ld\u{a}raru's conjecture (in which Theorem \ref{T:MorphismCentralSchemes} plays a  key role) is significantly  simpler.
\end{remark}

\section{Local modification theorem}

\noindent
In this section, let $R$ be a connected reduced excellent commutative ring of Krull dimension one and $K = \Quot(R)$ be its total ring of fractions (which is isomorphic to a finite product of fields). For any $\idm \in \Max(R)$, we have a multiplicatively closed set $S_{(\idm)} := R \setminus \{\idm\} \subset R$. Then we have: $K_{\idm} := S_{(\idm)}^{-1} K \cong \Quot(R_{\idm})$. Next, we have a commutative diagram of rings and canonical ring homomorphisms
$$
\xymatrix{
R \ar[r] \ar@{_{(}->}[d] & R_{\idm} \ar[r] \ar@{_{(}->}[d] & \widehat{R}_{\idm} \ar@{^{(}->}[d] \\
K \ar[r] & K_{\idm} \ar[r] & \widehat{K}_{\idm}
}
$$
where $\widehat{K}_{\idm} = \Quot(\widehat{R}_{\idm})$. Since $R$ is excellent, the completion $\widehat{R}_{\idm}$ is a reduced ring (see e.g. \cite{Dieudonne}) and  $\Quot(\widehat{R}_{\idm})$ is isomorphic to a finite product of fields. Note that the canonical ring homomorphism  $K \otimes_R \widehat{R}_{\idm} 
\rightarrow \widehat{K}_{\idm}$ is an isomorphism. 

\smallskip
\noindent
Let $U$ be a finitely generated $K$--module and $L \subset U$ a finitely generated $R$--submodule such that $K \cdot L = U$. Then $L$ is automatically a torsion free $R$--module and the  canonical map $K \otimes_R L \rightarrow U$ is an isomorphism. We shall also say that $L$ is an \emph{$R$--lattice} in (its \emph{rational envelope})
$U$. 

\smallskip
\noindent
For any $\idm \in \Max(R)$ we put: 
$U_{\idm} = K_{\idm} \otimes_K U$ and $\widehat{U}_{\idm} := \widehat{K}_{\idm} \otimes_K U$  as well as
$L_{\idm} := R_{\idm} \otimes_R L$ and 
$\widehat{L}_{\idm} := \widehat{R}_{\idm} \otimes_R L$. Using the canonical
ring homomorphisms $K \otimes_R R_{\idm} 
\rightarrow K_{\idm}$ and $K \otimes_R \widehat{R}_{\idm} 
\rightarrow \widehat{K}_{\idm}$,
we can view $L_{\idm}$ as an $R_{\idm}$--lattice in $U_{\idm}$ and 
 $\widehat{L}_{\idm}$ as an $\widehat{R}_{\idm}$--lattice in 
$\widehat{U}_{\idm}$. Next, we  have the following commutative diagram:
\begin{equation}
\begin{array}{c}
\xymatrix{
L \ar[r] \ar@{_{(}->}[d] &
L_{\idm}  \ar@{^{(}->}[r] \ar@{_{(}->}[d]^-{\varepsilon_{\idm}}  &  
\widehat{L}_{\idm} \ar@{^{(}->}[d]^-{\hat{\varepsilon}_{\idm}} \\
U \ar[r]^-{\vartheta_{\idm}} \ar@/_5.5ex/[rr]_-{\hat{\vartheta}_{\idm}} & 
U_{\idm}  \ar@{^{(}->}[r] &  \widehat{U}_{\idm} 
}
\end{array}
\end{equation}
in which all maps  are the canonical ones.

\begin{lemma}\label{L:LatticeRationalLocal} In the above notation we have:
\begin{equation}\label{E:LatticeRationalLocal}
L = \left\{
x \in U \, \big| \, \hat{\vartheta}_{\idm}(x) \in \mathsf{Im}(\hat{\varepsilon}_{\idm})
\; \mbox{\rm for all} \; \idm \in \Max(R)
\right\}.
\end{equation}
\end{lemma}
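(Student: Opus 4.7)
The plan is to prove the nontrivial inclusion ``$\supseteq$'' by a local-to-global argument combined with faithful flatness of the $\idm$-adic completion. Denote the right-hand side of \eqref{E:LatticeRationalLocal} by $L'$. The inclusion $L \subseteq L'$ is immediate from the definitions: the image of $L$ in $\widehat{U}_\idm$ factors through $\widehat{L}_\idm$. For the reverse inclusion, take $x \in L'$. To show $x \in L$, it suffices to show that the class $\bar{x} \in U/L$ vanishes, and since a module over a commutative ring vanishes iff all its localizations at maximal ideals vanish, this reduces to showing $(\bar{x})_\idm = 0$ in $(U/L)_\idm \cong U_\idm/L_\idm$ for every $\idm \in \Max(R)$. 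Equivalently, we must prove $\vartheta_\idm(x) \in L_\idm$.

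Fix $\idm$. The key step is to tensor the exact sequence
$$0 \lar L_\idm \lar U_\idm \lar U_\idm/L_\idm \lar 0$$
with the flat $R_\idm$-algebra $\widehat{R}_\idm$. Using that $L$ is a finitely generated $R$-module together with the compatibilities $U_\idm = K_\idm \otimes_{R_\idm} L_\idm$ and $\widehat{K}_\idm \cong K_\idm \otimes_{R_\idm} \widehat{R}_\idm$ recorded in the preamble, the resulting sequence identifies with
$$0 \lar \widehat{L}_\idm \lar \widehat{U}_\idm \lar \widehat{U}_\idm/\widehat{L}_\idm \lar 0,$$
yielding a canonical isomorphism $(U_\idm/L_\idm) \otimes_{R_\idm} \widehat{R}_\idm \cong \widehat{U}_\idm/\widehat{L}_\idm$ compatible with the natural maps from $U_\idm/L_\idm$.

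Now invoke faithful flatness of $\widehat{R}_\idm$ over the Noetherian local ring $R_\idm$: the unit map $U_\idm/L_\idm \lar (U_\idm/L_\idm) \otimes_{R_\idm} \widehat{R}_\idm$ is injective. Therefore $\vartheta_\idm(x) \in L_\idm$ is equivalent to the vanishing of the image of $\vartheta_\idm(x)$ in $\widehat{U}_\idm/\widehat{L}_\idm$. But this image equals $\hat{\vartheta}_\idm(x) \bmod \widehat{L}_\idm$, which is zero by the defining hypothesis of $L'$. This gives $\vartheta_\idm(x) \in L_\idm$ for every $\idm$, hence $x \in L$, as required.

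There is no serious obstacle; the argument is a standard patching via completions. The only point requiring care is bookkeeping of the base-change identifications $U_\idm = K_\idm \otimes_{R_\idm} L_\idm$ and $\widehat{U}_\idm = \widehat{R}_\idm \otimes_{R_\idm} U_\idm$, which ensure that tensoring the localized short exact sequence with $\widehat{R}_\idm$ reproduces the completed one verbatim. The fact that $R$ is excellent (used elsewhere to ensure $\widehat{R}_\idm$ is reduced) is not needed here; all that is used is that $R_\idm$ is Noetherian and local, so that its completion is faithfully flat.
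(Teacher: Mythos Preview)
Your proof is correct. The paper's argument has the same two-step structure but executes each step with slightly different tools. For the passage from completions to localizations, the paper invokes the lattice identity $L_\idm = \widehat{L}_\idm \cap U_\idm$ inside $\widehat{U}_\idm$ (citing Reiner), whereas you derive the needed injectivity from faithful flatness of $\widehat{R}_\idm$ over the noetherian local ring $R_\idm$; your route is more self-contained and makes explicit that excellence plays no role in this lemma. For the local-to-global step, the paper uses a transporter-ideal argument (the ideal $I = \{a \in R : ax \in L\}$ lies in no maximal ideal, hence equals $R$), while you phrase this as the vanishing of $\bar{x}$ in every localization of $U/L$. These are equivalent and equally standard; neither approach buys anything the other does not. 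One small wording point: you justify the element-wise local vanishing by saying ``a module vanishes iff all its localizations vanish,'' but what you need is that an \emph{element} vanishes iff its image in every localization at a maximal ideal does---this follows by applying the module statement to the cyclic submodule it generates, or directly from injectivity of $M \to \prod_\idm M_\idm$.
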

\begin{proof} For any $\idm \in \Max(R)$ we have: $L_{\idm} = \widehat{L}_{\idm} \cap U_{\idm}$, where the intersection is taken inside  $\widehat{U}_{\idm}$; see for instance \cite[Theorem 5.2]{ReinerMO}. Hence, it is sufficient to show that
$$
L = \widetilde{L} := \left\{
x \in U \, \big| \, \vartheta_{\idm}(x) \in \mathsf{Im}(\varepsilon_{\idm})
\; \mbox{\rm for all} \; \idm \in \Max(R)
\right\}.
$$
It is clear that $L \subseteq \widetilde{L}$, hence we only need to prove the opposite inclusion. Let $x \in \widetilde{L}$ and
$
I:= \left\{
a \in R \, \big| \, a x \in L
\right\}.
$
By definition of $\widetilde{L}$, for any $\idm \in \Max(R)$ there exists
$t \in S_{(\idm)}$ such that $t x \in L$. Since $t \in I \setminus \idm$, we conclude that $I \not\subset \idm$ for any $\idm \in \Max(R)$. As a consequence, 
$I = A$ and $x \in L$, as asserted.
\end{proof}

\begin{theorem}[Local modification theorem]\label{T:LocalModification} Let $U$ be a finitely generated $K$--module, $L \subset U$ an $R$--lattice and  $\Omega \subset \Max(R)$  a finite subset such that for any $\idm \in \Omega$ we are given an 
$\widehat{R}_{\idm}$--lattice $N(\idm) \subset \widehat{U}_{\idm}$. Then there exists a unique lattice $N \subset U$ (\emph{local modification} of $L$) such that for
any $\idm \in \Max(R)$ we have: 
\begin{equation}\label{E:prescribedcompletions}
\widehat{N}_{\idm} =
\left\{
\begin{array}{cc}
\widehat{L}_{\idm} & \mbox{\rm if} \;  \idm \notin \Omega \\
N(\idm) & \mbox{\rm if} \;  \idm \in \Omega,
\end{array}
\right.
\end{equation}
where $\widehat{N}_{\idm}$ is viewed as a subset of $\widehat{U}_{\idm}$.
\end{theorem}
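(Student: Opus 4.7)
The uniqueness is immediate from Lemma \ref{L:LatticeRationalLocal}: any $R$-lattice $N \subset U$ satisfying (\ref{E:prescribedcompletions}) must coincide with
$$
\widetilde{N} \; := \; \bigl\{x \in U \; \bigl| \; \hat{\vartheta}_{\idm}(x) \in N(\idm) \; \text{for all} \; \idm \in \Omega \;\; \text{and} \;\; \hat{\vartheta}_{\idm}(x) \in \widehat{L}_{\idm} \; \text{for all} \; \idm \in \Max(R)\setminus\Omega \bigr\},
$$
so it suffices to verify that $\widetilde{N}$ is an $R$-lattice with the prescribed completions. Without loss of generality I may assume that every $\idm \in \Omega$ is non-minimal (equivalently, that $R_{\idm}$ has Krull dimension one), since at a minimal maximal ideal $R_{\idm}$ is a field and the only $\widehat{R}_{\idm}$-lattice in $\widehat{U}_{\idm}$ is $\widehat{U}_{\idm} = \widehat{L}_{\idm}$ itself, so such an $\idm$ can be removed from $\Omega$.

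Next I sandwich $\widetilde{N}$ between two genuine $R$-lattices. For each $\idm \in \Omega$ the $\widehat{R}_{\idm}$-lattices $N(\idm)$ and $\widehat{L}_{\idm}$ in $\widehat{U}_{\idm}$ are commensurable, so there is $n_{\idm} \in \NN$ with
$$
\idm^{n_{\idm}}\widehat{L}_{\idm} \; \subseteq \; N(\idm) \; \subseteq \; \idm^{-n_{\idm}}\widehat{L}_{\idm}.
$$
Let $I := \prod_{\idm \in \Omega} \idm^{n_{\idm}}$. Because every $\idm \in \Omega$ is non-minimal and $R$ is reduced, $I$ is not contained in any minimal prime of $R$, so prime avoidance (applied to the finite set of minimal primes, whose union is precisely the set of zero-divisors of $R$) yields regular elements $a, b \in I$. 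By construction $b\widehat{L}_{\idm} \subseteq N(\idm)$ and $aN(\idm) \subseteq \widehat{L}_{\idm}$ for each $\idm \in \Omega$, which by Lemma \ref{L:LatticeRationalLocal} and the definition of $\widetilde{N}$ translates into
$$
bL \; \subseteq \; \widetilde{N} \; \subseteq \; a^{-1}L.
$$
Consequently $\widetilde{N}$ is finitely generated as a submodule of the noetherian $R$-module $a^{-1}L$, and $K\widetilde{N} \supseteq K\cdot bL = U$, so $\widetilde{N}$ is an $R$-lattice in $U$.

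Finally, I compute the completions. The quotient $M := a^{-1}L/bL$ is a finite-length torsion $R$-module, whose support $\Sigma$ is a finite set of maximal ideals containing $\Omega$, and it decomposes canonically as $M = \bigoplus_{\idm \in \Sigma} M_{\idm}$ with $M_{\idm} = a^{-1}\widehat{L}_{\idm}/b\widehat{L}_{\idm}$. Since the defining conditions of $\widetilde{N}$ are local at each $\idm$, and submodules of a finite-length $R$-module respect the primary decomposition, the submodule $\widetilde{N}/bL \subseteq M$ corresponds to
$$
\widetilde{N}/bL \; = \; \bigoplus_{\idm \in \Omega} N(\idm)/b\widehat{L}_{\idm} \; \oplus \; \bigoplus_{\idm \in \Sigma\setminus\Omega} \widehat{L}_{\idm}/b\widehat{L}_{\idm}.
$$
Extracting the $\idm_0$-component (which for a finite-length module agrees with both $\idm_0$-localization and $\idm_0$-adic completion, the latter being exact on finitely generated $R$-modules) yields $\widehat{\widetilde{N}}_{\idm_0}/b\widehat{L}_{\idm_0} = N(\idm_0)/b\widehat{L}_{\idm_0}$ for $\idm_0 \in \Omega$ and $\widehat{\widetilde{N}}_{\idm_0}/b\widehat{L}_{\idm_0} = \widehat{L}_{\idm_0}/b\widehat{L}_{\idm_0}$ for $\idm_0 \in \Max(R)\setminus\Omega$ (the case $\idm_0 \notin \Sigma$ being automatic, since then $a$ and $b$ are units in $R_{\idm_0}$). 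This establishes (\ref{E:prescribedcompletions}).

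The step I expect to require the most care is the local decomposition of $\widetilde{N}/bL$: one has to verify that the global condition defining $\widetilde{N}$ genuinely imposes exactly the indicated submodule at each $\idm \in \Sigma$, rather than an a priori coarser constraint. Once this is settled via the primary decomposition of finite-length $R$-modules, the rest of the proof reduces to prime avoidance together with the standard behaviour of completion on finitely generated modules over a one-dimensional noetherian ring.
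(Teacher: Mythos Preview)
Your proof is correct. The underlying ingredients---Lemma~\ref{L:LatticeRationalLocal} for uniqueness, prime avoidance to produce regular elements in $\prod_{\idm\in\Omega}\idm^{n_\idm}$, and the finite--length analysis of the resulting torsion quotient---are exactly the ones the paper uses, but the two arguments are organized differently. The paper proceeds in two steps: first it treats the special case $N(\idm)\subseteq\widehat{L}_\idm$ by \emph{constructing} $N$ as the kernel of the natural surjection $L\twoheadrightarrow\bigoplus_{\idm\in\Omega}\widehat{L}_\idm/N(\idm)$, and then it reduces the general case to this one by passing to the overlattice $a^{-1}L$. You instead \emph{define} $\widetilde N$ by the formula that Lemma~\ref{L:LatticeRationalLocal} forces, sandwich it between $bL$ and $a^{-1}L$, and read off the completions from the primary decomposition of the finite--length module $a^{-1}L/bL$. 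The paper's route is slightly cleaner because the kernel description makes the completions of $N$ immediate and avoids the verification (which you rightly flag) that the global intersection condition defining $\widetilde N$ decomposes correctly over $\Sigma$; your route has the advantage of producing the candidate lattice in one step rather than two. Two minor remarks: your reduction ``remove minimal maximal ideals from $\Omega$'' is in fact vacuous, since $R$ is connected of Krull dimension one and hence has no such ideals; and the inclusion $\Omega\subseteq\Sigma$ tacitly uses $L_\idm\ne 0$ for $\idm\in\Omega$, which is harmless (if $L_\idm=0$ then $\widehat U_\idm=0$ and the condition at $\idm$ is empty) but worth noting.
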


\begin{proof} In order to prove the existence of $N$, we first consider the following special

\noindent
\underline{Case 1}. Suppose   that $N(\idm) \subseteq \widehat{L}_{\idm}$ for all $\idm \in \Omega$. Since the $\widehat{R}_{\idm}$--modules  $N(\idm)$ and $\widehat{L}_{\idm}$ have the same rational envelope $\widehat{U}_{\idm}$, the $\widehat{R}_{\idm}$--module  $T(\idm):= \widehat{L}_{\idm}/N(\idm)$ has  finite length. Consequently, $T(\idm)$ has finite length viewed as an $R$--module and $\mathsf{Supp}\bigl(T(\idm)\bigr) = \{\idm\}$. We have a surjective homomorphism of $R$--modules $L \stackrel{c(\idm)}\lar 
T(\idm)$ given as the composition 
$
L \rightarrow \widehat{L}_{\idm} \dhrightarrow T(\idm).
$
Let $T = \oplus_{\idm \in \Omega} T(\idm)$. Then we get an $R$--module homomorphism  $L \stackrel{c}\lar T$, whose components are the maps $c(\idm)$ defined above. By construction, the map  $\widehat{L}_{\idm}  \stackrel{\hat{c}_{\idm}}\lar 
\widehat{T}_{\idm}$ is surjective for all $\idm \in \Max(R)$. As consequence,  the map $c$ is surjective, too. Let $N$ be the kernel of $c$. Then $N$ is a noetherian $R$--module, $K \cdot N = K\cdot L = U$  and  completions 
$\widehat{N}_{\idm} \subset \widehat{U}_{\idm}$
 are  given  by the formula  (\ref{E:prescribedcompletions}) for any $\idm \in \Max(R)$.

\smallskip 
\noindent
\underline{Case 2}. Consider now the general case, where $N(\idm) \subset \widehat{U}_{\idm}$ is an arbitrary $\widehat{R}_{\idm}$--lattice for $\idm \in \Omega$. Since $\widehat{K}_{\idm} \cdot \bigl(\widehat{L}_{\idm} + N(\idm)\bigr) = 
\widehat{K}_{\idm} \cdot \widehat{L}_{\idm}  = \widehat{U}_{\idm}$, the 
 $\widehat{R}_{\idm}$--module  
 $X(\idm):= \bigl(\widehat{L}_{\idm} + N(\idm)\bigr)/\widehat{L}_{\idm}$ has  finite length. It follows that  $X(\idm)$ has also finite length viewed as $R$--module and $\mathsf{Supp}\bigl(X(\idm)\bigr) = \{\idm\}$. Hence, there exists
$l \in \NN$ such that $\idm^l \cdot  X(\idm) = 0$. 

\smallskip
\noindent
Let $\bigl\{\idp_1, \dots, \idp_r\}$ be the set of minimal prime ideals of $R$. 
Then $D:= \idp_1 \cup \dots \cup \idp_r$ is the set of zero divisors of $R$. By prime avoidance, $\idm^l \not\subseteq D$, hence there exists $a^{(\idm)} \in \idm^l \setminus D$ such that $a^{(\idm)} X(\idm) = 0$, i.e.
$a^{(\idm)} N(\idm) \subseteq \widehat{L}_{\idm}$. Let $a := \prod_{\idm \in \Omega} a^{(\idm)}$. Then $a$ is a regular element in the ring $R$ and $a N(\idm) \subseteq \widehat{L}_{\idm}$ for any $\idm \in \Omega$. 

\smallskip
\noindent
Now we put $M:= \frac{1}{a} L \subset U$. Then $M$ is a lattice in $U$, $L \subseteq M$ and $N(\idm) \subseteq \widehat{L}_{\idm}$ for any $\idm \in \Omega$.
Let 
$
\Sigma :=
\bigl\{
\idm \in \Max(R) \, \big| \, a \in \idm\bigr\} = \mathsf{Supp}\bigl(R/(a)\bigr).
$
Then according to Case 1, there exists a sublattice $N \subseteq M$ such that
\begin{equation*}
\widehat{N}_{\idm} =
\left\{
\begin{array}{ll}
\widehat{M}_{\idm} & \mbox{\rm if} \;  \idm \notin \Sigma \cup \Omega \\
\widehat{L}_{\idm} & \mbox{\rm if} \;  \idm \in \Sigma \setminus \Omega \\
N(\idm) & \mbox{\rm if} \;  \idm \in \Omega.
\end{array}
\right.
\end{equation*}
This proves the existence of an $R$--lattice $N \subset U$ with the prescribed completions (\ref{E:prescribedcompletions}). 
The uniqueness of $N$ is a consequence of Lemma \ref{L:LatticeRationalLocal}. 
\end{proof}

\begin{remark}
The statement of  Theorem \ref{T:LocalModification} must be well--known to the experts. 
In the case when $R$ is an integral domain, it can be for instance  found  in  \cite[Th\'eor\`eme VII.4.3]{Bourbaki}. However,  we were not able to find a proof of  this result in the full generality  in the known literature. Since it plays a crucial role in our study of non--commutative curves, we decided to include a detailed proof for the sake of completeness and reader's convenience. 
\end{remark}

\medskip
\noindent
Let $\Lambda$ be a semi--simple $K$--algebra. Recall that a subring $A \subset \Lambda$ is an \emph{$R$--order}  if $R\cdot A = A$, $A$ is finitely generated  $R$--module and $K\cdot A = \Lambda$. Note that for any $\idm \in \Max(R)$ we have:
$$
\widehat{A}_{\idm} := \widehat{R}_{\idm} \otimes_R A \quad \mbox{\rm and} \quad
\widehat{\Lambda}_{\idm} := \widehat{K}_{\idm} \otimes_K \Lambda  \cong \widehat{R}_{\idm} \otimes_R \Lambda \cong \widehat{K}_{\idm} \otimes_R A.
$$
In particular, 
$\widehat{A}_{\idm}$ is an $\widehat{R}_{\idm}$--order in the semi--simple 
$\widehat{K}_{\idm}$--algebra
$\widehat{\Lambda}_{\idm}$.

\begin{proposition}\label{P:OrderPrescribedCompl} Let $\Lambda$ be a semi--simple 
$K$--algebra and $A \subset \Lambda$ be an $R$--order. 
Let $\Omega \subset \Max(R)$ be a finite subset such that for any $\idm \in \Omega$ we are given an $\widehat{R}_{\idm}$--order $B(\idm) \subset \widehat{\Lambda}_{\idm}$. Then there exists a unique $R$--order $B \subset \Lambda$ such that 
\begin{equation}\label{E:prescribedcompletionsorder}
\widehat{B}_{\idm} =
\left\{
\begin{array}{cc}
\widehat{A}_{\idm} & \mbox{\rm if} \;  \idm \notin \Omega \\
B(\idm) & \mbox{\rm if} \;  \idm \in \Omega.
\end{array}
\right.
\end{equation}
\end{proposition}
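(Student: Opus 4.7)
The plan is to deduce this directly from the Local Modification Theorem (Theorem \ref{T:LocalModification}), which we apply with $U = \Lambda$ viewed as a finitely generated $K$--module, with the $R$--lattice $L = A$, and with the prescribed completions $N(\idm) = B(\idm) \subset \widehat{\Lambda}_{\idm} = \widehat{U}_{\idm}$ for $\idm \in \Omega$. This immediately produces a unique $R$--lattice $B \subset \Lambda$ such that
$$
\widehat{B}_{\idm} = \widehat{A}_{\idm} \; \mbox{\rm for}\; \idm \notin \Omega, \quad \widehat{B}_{\idm} = B(\idm) \; \mbox{\rm for}\; \idm \in \Omega.
$$
Since $B$ has rational envelope $\Lambda$ and is finitely generated and torsion free over $R$, the remaining content of the proposition is that $B$ is in fact a subring of $\Lambda$ containing the unit (hence an $R$--order), and that the $R$--order constructed this way is unique among $R$--orders with the prescribed completions.

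For the ring structure, I would invoke Lemma \ref{L:LatticeRationalLocal} applied to $B$: an element $x \in \Lambda$ lies in $B$ if and only if $\hat\vartheta_{\idm}(x) \in \widehat{B}_{\idm}$ for every $\idm \in \Max(R)$. By hypothesis, $\widehat{A}_{\idm}$ and $B(\idm)$ are $\widehat{R}_{\idm}$--subalgebras of $\widehat{\Lambda}_{\idm}$ containing $1$. Consequently $1 \in \widehat{B}_{\idm}$ for every $\idm$, so $1 \in B$; and given $x, y \in B$, the multiplicativity of each $\hat\vartheta_{\idm}$ yields $\hat\vartheta_{\idm}(xy) = \hat\vartheta_{\idm}(x)\hat\vartheta_{\idm}(y) \in \widehat{B}_{\idm}$, whence $xy \in B$. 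Thus $B$ is closed under multiplication and is an $R$--order in $\Lambda$.

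Uniqueness is inherited from Theorem \ref{T:LocalModification}: any $R$--order $B' \subset \Lambda$ satisfying (\ref{E:prescribedcompletionsorder}) is in particular an $R$--lattice in $\Lambda$ with the same prescribed completions as $B$, hence coincides with $B$. The only mildly non--routine point in the plan is ensuring the lattice $B$ produced by Theorem \ref{T:LocalModification} carries a ring structure, but the characterization in Lemma \ref{L:LatticeRationalLocal} reduces this to the fact that each $\widehat{B}_{\idm}$ is already a subring of $\widehat{\Lambda}_{\idm}$, so no genuine obstacle arises.
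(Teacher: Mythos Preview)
Your proof is correct and follows essentially the same approach as the paper: apply Theorem~\ref{T:LocalModification} to obtain the unique $R$--lattice $B$, then use the characterization of Lemma~\ref{L:LatticeRationalLocal} together with the fact that each $\widehat{B}_{\idm}$ is a subring to conclude that $B$ is closed under multiplication (and contains $1$). The paper's argument is slightly terser but identical in substance.
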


\begin{proof}
According to Theorem \ref{T:LocalModification} there exists a uniquely determined $R$--lattice $B \subset \Lambda$ with completions given by (\ref{E:prescribedcompletionsorder}). We have to show that $B$ us actually a subring. For any $\idm \in \Max(R)\setminus \Omega$ we put: $B(\idm) = \widehat{A}_{\idm}$. By Lemma \ref{L:LatticeRationalLocal} we have: 
$$
B = \left\{
b \in \Lambda \, \big| \, \hat{\vartheta}_{\idm}(b) \in B(\idm)
\; \mbox{\rm for all} \; \idm \in \Max(R)
\right\}.
$$
It follows that $b_1 + b_2, b_1 \cdot  b_2 \in B$ for all $b_1, b_2 \in B$. 
\end{proof}

\medskip
\noindent
Assume now that $U$ is a finitely generated left $\Lambda$--module. An $A$--submodule $L \subset U$, which is also an $R$--lattice,  is called \emph{$A$--lattice}. For any $\idm \in \Max(R)$ we have isomorphisms 
$$
\widehat{L}_{\idm} := \widehat{A}_{\idm} \otimes_A L \cong \widehat{R}_{\idm} \otimes_R L \quad \mbox{\rm and} \quad 
\widehat{U}_{\idm} := \widehat{\Lambda}_{\idm} \otimes_\Lambda U \cong \widehat{K}_{\idm} \otimes_K U \cong \widehat{R}_{\idm} \otimes_R U.
$$
It follows that $\widehat{L}_{\idm}$ is an $\widehat{A}_{\idm}$--lattice in 
$\widehat{U}_{\idm}$. 

\begin{proposition}\label{P:OrderPrescribedCompllattice} Let $U$ be a finitely generated $\Lambda$--module and $L\subset U$ an $A$--lattice. 
Let $\Omega \subset \Max(R)$ be a finite subset such that for any $\idm \in \Omega$ we are given an $\widehat{A}_{\idm}$--lattice $N(\idm) \subset \widehat{U}_{\idm}$. Then there exists a unique $A$--lattice $N \subset U$ such that 
\begin{equation}\label{E:prescribedcompletionslattice}
\widehat{N}_{\idm} =
\left\{
\begin{array}{cc}
\widehat{L}_{\idm} & \mbox{\rm if} \;  \idm \notin \Omega \\
N(\idm) & \mbox{\rm if} \;  \idm \in \Omega.
\end{array}
\right.
\end{equation}
\end{proposition}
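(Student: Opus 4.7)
The plan is to deduce this statement directly from Theorem \ref{T:LocalModification} by upgrading the $R$-lattice produced there to an $A$-submodule, exactly in parallel with how Proposition \ref{P:OrderPrescribedCompl} upgraded a lattice to a subring.

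First I would forget the $A$-module structure and view $L \subset U$ merely as an $R$-lattice in a finitely generated $K$-module. Each $\widehat{A}_{\idm}$-lattice $N(\idm) \subset \widehat{U}_{\idm}$ is in particular an $\widehat{R}_{\idm}$-lattice in $\widehat{U}_{\idm}$, so the hypotheses of Theorem \ref{T:LocalModification} are satisfied. This produces a unique $R$-lattice $N \subset U$ whose completions are given by \eqref{E:prescribedcompletionslattice}. The uniqueness claim in the proposition is then immediate, since any $A$-lattice realizing the prescribed completions is, a fortiori, an $R$-lattice with those completions.

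The remaining point — and essentially the only thing to check — is that this $N$ is stable under the left action of $A$. For this I would invoke the intrinsic description of $N$ given by Lemma \ref{L:LatticeRationalLocal}, extended to all maximal ideals by setting $N(\idm):=\widehat{L}_{\idm}$ for $\idm \notin \Omega$:
\begin{equation*}
N = \bigl\{ x \in U \,\big|\, \hat{\vartheta}_{\idm}(x) \in N(\idm) \text{ for all } \idm \in \Max(R)\bigr\}.
\end{equation*}
Now take $a \in A$ and $x \in N$. For each $\idm \in \Max(R)$, the map $\hat{\vartheta}_{\idm}\colon U \to \widehat{U}_{\idm}$ is $A$-linear with respect to the ring homomorphism $A \to \widehat{A}_{\idm}$, so
\begin{equation*}
\hat{\vartheta}_{\idm}(a x) = \hat{a}_{\idm} \cdot \hat{\vartheta}_{\idm}(x),
\end{equation*}
where $\hat{a}_{\idm}$ denotes the image of $a$ in $\widehat{A}_{\idm}$. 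Since $\hat{\vartheta}_{\idm}(x) \in N(\idm)$ and $N(\idm)$ is by assumption an $\widehat{A}_{\idm}$-submodule of $\widehat{U}_{\idm}$ (for $\idm \in \Omega$ by hypothesis, for $\idm \notin \Omega$ because $\widehat{L}_{\idm}$ is an $\widehat{A}_{\idm}$-lattice), we conclude $\hat{\vartheta}_{\idm}(ax) \in N(\idm)$ for every $\idm$. Hence $ax \in N$, so $N$ is an $A$-submodule of $U$ and therefore an $A$-lattice with the required completions.

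There is no real obstacle: the whole argument rests on the characterization of a lattice by its completions, which has already been established in Lemma \ref{L:LatticeRationalLocal}. The only subtle point is remembering that the natural embedding $U \hookrightarrow \widehat{U}_{\idm}$ is compatible with the $A$-actions on both sides via $A \to \widehat{A}_{\idm}$, so $A$-stability can be tested completion by completion. This is precisely the same mechanism that made the proof of Proposition \ref{P:OrderPrescribedCompl} work for multiplication, now applied to the external $A$-action instead of internal multiplication in $\Lambda$.
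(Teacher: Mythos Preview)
Your proof is correct and follows exactly the approach the paper intends: the paper simply says ``The proof of this result is the same as of Proposition \ref{P:OrderPrescribedCompl},'' and your argument is precisely that transcription, replacing the check that $B$ is closed under internal multiplication by the check that $N$ is closed under the external $A$-action, both via Lemma \ref{L:LatticeRationalLocal}.
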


\noindent
The proof of this result is the same as of  Proposition \ref{P:OrderPrescribedCompl}.

\section{Morita theorem for one--dimensional orders}

\noindent
As in the previous section, let $R$ be a reduced excellent ring of Krull dimension one, whose total ring of fractions is $K$. 
Recall that  one can also define the notion of an $R$--order without fixing  its rational envelope first. Namely, a finite $R$--algebra $A$ is an $R$--order if and only if it  is torsion free, viewed as as $R$--module and the ring  $\Lambda:= K \otimes_R A$ (the rational envelope of $A$) is semi--simple. Note that $A$ is an $R$--order if and only if $A$ is an $Z(A)$--order. 
If $R = Z(A)$ then $A$ is a \emph{central} $R$--order. If we have a ring extension $A \subseteq A'$ such that $A'$ is an $R$--order and $K \otimes_R A \rightarrow K \otimes_R A'$ is an isomorphism then  $A'$ is called \emph{overorder} of $A$. An order without proper overorders is called \emph{maximal}. 

\smallskip
\noindent
Analogously, a finitely generated (left) $A$--module $L$ is a (left) $A$--lattice if $L$ is torsion free viewed as an $R$--module. In this case, the $\Lambda$--module $V := K \otimes_R L$ is the rational envelope of $L$. If we have an extension of $A$--modules $L \subseteq N$ such that $L, N$ are both $A$--lattices and  the induced map
$K \otimes_R L \rightarrow K \otimes_R N$ is an isomorphism, then $L$ and $N$ are \emph{rationally equivalent} and $N$ is  an \emph{overlattice} of $L$ and $L$ is a \emph{sublattice} of $N$, respectively.

\subsection{Categorical characterization of the non--regular locus of an order} Let $\Lambda$ be a semi--simple $K$--algebra and $A \subset \Lambda$ be an $R$--order. Then the  set 
\begin{equation}
\gS_{A} := \bigl\{\idm \in \Max(R) \, \big|\, A_{\idm} \subset \Lambda_{\idm} \; \mbox{is not a maximal order}\bigr\} \subset \Spec(R)
\end{equation}
is the locus of   \emph{non--regular points} of $A$.

\begin{lemma}\label{L:NonRegLocus} The set $\gS_{A}$ is finite.
\end{lemma}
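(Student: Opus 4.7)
The plan is to embed $A$ into a maximal $R$-order $\widetilde{A} \subseteq \Lambda$ and show that, off a controlled finite set of primes, $A$ already agrees with $\widetilde{A}$ and is itself maximal. Concretely, I would first establish the existence of a maximal $R$-order $\widetilde{A}$ in $\Lambda$ containing $A$. Since $K \otimes_R A = \Lambda = K \otimes_R \widetilde{A}$, the finitely generated $R$-module $\widetilde{A}/A$ is $R$-torsion, so its support $\Omega := \mathsf{Supp}_R\bigl(\widetilde{A}/A\bigr)$ is a finite subset of $\Max(R)$. For any $\idm \in \Max(R)\setminus\Omega$ we have $A_\idm = \widetilde{A}_\idm$, so the problem reduces to showing that $\widetilde{A}_\idm$ is a maximal $R_\idm$-order in $\Lambda_\idm$ for \emph{every} $\idm \in \Max(R)$; this will give $\gS_A \subseteq \Omega$.

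The key step — and the one I view as the heart of the argument — is this maximality statement for localizations of $\widetilde{A}$, which I would derive from Proposition \ref{P:OrderPrescribedCompl}. By faithfully flat descent along $R_\idm \lar \widehat{R}_\idm$, $\widetilde{A}_\idm \subset \Lambda_\idm$ is maximal if and only if $\widehat{\widetilde{A}}_\idm \subset \widehat{\Lambda}_\idm$ is maximal. Suppose, for contradiction, that some $\idm$ admits a proper $\widehat{R}_\idm$-overorder $B(\idm) \supsetneq \widehat{\widetilde{A}}_\idm$ inside $\widehat{\Lambda}_\idm$. Applying Proposition \ref{P:OrderPrescribedCompl} with the finite set $\{\idm\}$ and the prescribed completion $B(\idm)$, we obtain an $R$-order $B \subset \Lambda$ whose completion at $\idm$ equals $B(\idm)$ and whose completion at every other maximal ideal equals $\widehat{\widetilde{A}}_\idn$. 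By Lemma \ref{L:LatticeRationalLocal}, the inclusions of completions $\widehat{\widetilde{A}}_\idn \subseteq \widehat{B}_\idn$ (with strict inclusion at $\idm$) force $\widetilde{A} \subsetneq B$, contradicting the maximality of $\widetilde{A}$.

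The main obstacle is the existence of a maximal $R$-order $\widetilde{A} \supseteq A$. This is classical when $R$ is a Dedekind domain: every ascending chain of $R$-orders in $\Lambda$ containing $A$ is bounded inside the dual $R$-lattice $A^\ast := \{x \in \Lambda \,|\, \mathsf{tr}_{\Lambda/K}(xA) \subseteq R\}$ (which is indeed an $R$-lattice when the reduced trace form is non-degenerate, e.g.\ in characteristic $0$), so noetherianity plus Zorn's lemma yields a maximal element. In the general excellent case, one can reduce to this by passing to the finite integral closure $R^\nu$ of $R$ — finite over $R$ by excellence — constructing a maximal $R^\nu$-order in $\Lambda$ containing $R^\nu \cdot A$ via the classical theory, and then intersecting with suitable $R$-lattices to extract an $R$-maximal overorder of $A$. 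Once this existence is in hand, the argument of the previous paragraph closes the proof.
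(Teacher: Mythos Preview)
Your proof is correct and follows the same overall strategy as the paper: construct a maximal overorder $\widetilde{A} \supseteq A$ and observe that $\gS_A$ is contained in the finite support of the torsion $R$--module $\widetilde{A}/A$. The construction of $\widetilde{A}$ is also essentially the same---pass to the normalization (a finite extension by excellence), where the classical theory over Dedekind domains applies. The paper makes this explicit by decomposing along the minimal primes of $R$ and citing \cite[Corollary 10.4]{ReinerMO}; your sketch is a bit terse, and the phrase ``intersecting with suitable $R$--lattices'' is in fact unnecessary: a maximal $R^\nu$--order containing $R^\nu\cdot A$ is automatically a maximal $R$--order containing $A$, since any $R$--overorder $B$ yields an $R^\nu$--overorder $R^\nu \cdot B$.

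The one genuine difference lies in how you establish that localizations of the global maximal order $\widetilde{A}$ are themselves maximal. The paper simply invokes \cite[Section 11]{ReinerMO}. You instead argue by contradiction via the local modification theorem (Proposition \ref{P:OrderPrescribedCompl}): a hypothetical proper overorder of some $\widehat{\widetilde{A}}_\idm$ can be glued with $\widetilde{A}$ away from $\idm$ to manufacture a global proper overorder of $\widetilde{A}$, contradicting maximality. This is a pleasant, self--contained alternative that leverages machinery already built in Section 6 rather than an external reference. (Note also that you only need the easy direction of the local--complete comparison: if $\widehat{\widetilde{A}}_\idm$ is maximal then so is $\widetilde{A}_\idm$, which follows immediately from faithful flatness of completion; the full ``iff'' is not required.)
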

\begin{proof}
Let $\bigl\{\idp_1, \dots \idp_r\bigr\}$ be the set of minimal prime ideals in $R$, 
$R_i := R/{\idp}_i$ and $K_i := \Quot(R_i)$ for $1 \le i \le r$. Then we have injective ring homomorphisms: 
$$
R \xhar R':= R_1 \times \dots \times R_r \xhar
K_1 \times \dots \times K_r \cong K.
$$
In these terms, we get a decomposition: $\Lambda \cong \Lambda_1 \times \dots \times \Lambda_r$, where $\Lambda_i$ is a finite dimensional simple $K_i$--algebra for all $1 \le i \le r$. Let $A' := R' \cdot A \subset \Lambda$. Then $A'$ is an overorder of $A$ and we have a decomposition $A' \cong A'_1 \times \dots \times A'_r$, where $A'_i$ is an order in the simple algebra $\Lambda_i$. 
Let $\widetilde{R}_i$ be the integral closure of $R_i$ in $K_i$. Since $R$ is \emph{excellent} of Krull dimension one, the ring $\widetilde{R}_i$ is regular and the ring extension $R_i \subseteq \widetilde{R}_i$ is finite. It follows that  $A''_i :=
\widetilde{R}_i \cdot A'_i$ is an $\widetilde{R}_i$--order in the simple $K_i$--algebra
$\Lambda_i$. According to \cite[Corollary 10.4]{ReinerMO}, $A''_i$ is contained in a maximal order $\widetilde{A}_i$. Let $\widetilde{A} := \widetilde{A}_1 \times \dots \times \widetilde{A}_r$. Then $\widetilde{A}$ is a maximal order in $\Lambda$, which is  an  overorder of $A$. It follows from results of \cite[Section 11]{ReinerMO} that $\mathfrak{S}_A$ is the support of the finite length $R$--module $\widetilde{A}/A$. Hence, $\mathfrak{S}_A$ is a finite set. 
\end{proof}

\begin{remark}
Let $\idm \in \Max(R)$ be a \emph{regular} point of $A$, i.e.~$A_{\idm} \subset \Lambda_{\idm}$ is a maximal order. 
According to 
\cite[Lemma 2.3]{Harada}, its center $Z(A_{\idm})$ is a Dedekind ring. 
Moreover, $A_{\idm}$ itself is hereditary, too; see \cite[Theorem 18.1]{ReinerMO}.
 Note also that $A_{\idm}$ is a maximal order if and only if $\widehat{A}_{\idm}$ is a maximal order; see
 \cite[Theorem 11.5]{ReinerMO}. 
\end{remark}

\begin{lemma}\label{L:OverorderTechnical}
Let $A$ be an order in $\Lambda$ and $B$ its overorder such  that $A\cong B$, viewed as left $A$--modules. Then we have: $A = B$. 
\end{lemma}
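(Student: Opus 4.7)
The plan is to exploit the interaction between the left $A$-module structure of $B$ and the fact that $B$ is a subring of $\Lambda$ containing $A$. Let $\varphi \colon A \lar B$ be an isomorphism of left $A$-modules and set $b := \varphi(1) \in B$. Since $\varphi$ is $A$-linear, for every $a \in A$ we have $\varphi(a) = ab$, so $B = A b$.

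Next I would check that $b$ is invertible in $\Lambda$. Since $\varphi$ is surjective, there is $a_0 \in A$ with $a_0 b = 1$. Extending scalars from $R$ to $K$, the map $\varphi$ induces a $\Lambda$-linear map $\Lambda = K \otimes_R A \lar K \otimes_R B = \Lambda$, and this extension is still bijective (both source and target have the same $K$-dimension, and $\varphi$ was an isomorphism); but this extension is nothing but left multiplication by $b$ on $\Lambda$. Hence $b \in \Lambda^\times$, and from $a_0 b = 1$ one deduces $b a_0 = 1$ as well, so $b^{-1} = a_0 \in A$.

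Now comes the key observation: $B$ is a subring of $\Lambda$, so $b \cdot b = b^2 \in B = A b$. Therefore there exists $a_1 \in A$ with $b^2 = a_1 b$. Multiplying on the right by $b^{-1} \in \Lambda$ yields $b = a_1 \in A$, hence $B = A b \subseteq A$, which combined with $A \subseteq B$ gives $A = B$.

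There is no serious obstacle here: once the isomorphism is turned into right multiplication by the single element $b = \varphi(1)$, the only mildly subtle point is the invertibility of $b$ in $\Lambda$, which follows from rational equivalence of $A$ and $B$. The rest is a one-line manipulation inside the semisimple algebra $\Lambda$.
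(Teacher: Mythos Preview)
Your proof is correct. One slip of terminology: the extension of $\varphi$ to $\Lambda$ is \emph{right} multiplication by $b$ (the map $\lambda \mapsto \lambda b$), not left multiplication; the mathematics you do with it is fine. The invertibility of $b$ can also be seen more cheaply: $\Lambda$ is semi--simple, hence Artinian, so the one--sided inverse $a_0 b = 1$ is automatically two--sided.

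The paper's argument runs in the opposite direction: it takes the inverse isomorphism $B \to A$, identifies it (via $\Hom_A(B,A) \cong \{\lambda \in A \mid B\lambda \subseteq A\}$) with right multiplication by some $\lambda \in A$ satisfying $B\lambda = A$, and then finishes by comparing the endomorphism rings $\End_A(A) \cong A^\circ$ and $\End_A(B) \cong B^\circ$ through the isomorphism $\Ad_\lambda$ induced by $\rho_\lambda$. Your endgame is more elementary: instead of passing through endomorphism rings, you use the ring structure of $B$ directly via $b^2 \in B = Ab$, which immediately forces $b \in A$. Both routes rest on the same key observation --- that a left $A$--module isomorphism between $R$--lattices with rational envelope $\Lambda$ is right multiplication by a unit of $\Lambda$ --- so I would call the approaches essentially the same, with your finish being the more transparent one.
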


\begin{proof}
First note that the  canonical morphism 
$$
\bigl\{\lambda \in A \,|\, B \lambda \subseteq A\bigr\} \lar \Hom_A(B, A), \; 
\lambda \mapsto (b \stackrel{\rho_\lambda}\mapsto b \lambda)
$$
is an isomorphism. Next, the following diagram is commutative:
$$
\xymatrix{
A^\circ \ar[rr]^-{\can} \ar@{_{(}->}[dd] & & \End_A(A) \\
& & \End_A(B) \ar[u]_-{\Ad_{\lambda}} \\
B^\circ \ar[rr]^-{\can}  & & \End_B(B), \ar[u]_-{\can}
}
$$
where $\lambda \in A$ is such that $B \lambda = A$ and all canonical arrows are isomorphisms. It follows that $A = B$, as asserted. 
\end{proof}

\smallskip
\noindent
It turns out that the set $\gS_{A}$ admits the following characterization. 

\begin{theorem}\label{T:MaximalityCategorical} Let $A$ be a central $R$--order in the semi--simple $K$--algebra 
$\Lambda$. 
Let $S$ be a simple $A$--module, $F := \End_A(S)$ the corresponding skew field  and $\mathsf{Supp}_R(S) = \{\idm\}$. Then $A_{\idm}$ is a maximal order if and only if the following conditions are satisfied:
\begin{itemize}
\item The length of the left $F$--module $\Ext_A^1(S, S)$ is one.
\item For any simple $A$--module $T \not\cong S$ we have: $\Ext^1_A(S, T) = 0$. 
\end{itemize}
\end{theorem}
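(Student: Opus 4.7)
The plan is to reduce the statement to the case of a complete local basic central order, where the Ext conditions translate directly into constraints on the Peirce decomposition and, via Krull's intersection theorem together with semi-simplicity of the rational envelope, force the basic algebra to be a non-commutative discrete valuation ring. First, since $\mathsf{Supp}_R(S)=\{\idm\}$, the module $S$ is naturally an $\widehat{A}_\idm$-module, and the forgetful functor from $\widehat{A}_\idm$-modules to $A$-modules is fully faithful and preserves injectives (the ring map $A\to\widehat{A}_\idm$ being flat; cf.\ Lemma \ref{L:LocalToGlobal} for the localization analogue). Hence $\Ext^1_A(S,T)\cong\Ext^1_{\widehat{A}_\idm}(S,T)$ for simples $T$ supported at $\idm$, while for simples $T$ with different support any extension of $S$ by $T$ splits via the $R$-module support decomposition (possible because $R\subseteq Z(A)$), so $\Ext^1_A(S,T)=0$ automatically. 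As maximality of $A_\idm$ is equivalent to that of $\widehat{A}_\idm$, I may assume throughout that $A=\widehat{A}_\idm$ and $R=\widehat{R}_\idm$; and by Morita equivalence (which preserves the center, maximality of orders, and Ext groups between simples) further pass to the basic algebra $B$. Centrality of $A$ together with locality of $R$ ensures that $B$ is indecomposable as a ring, with a decomposition $1=e_1+\cdots+e_n$ into pairwise orthogonal primitive idempotents such that $S=Be_1/\rad(B)e_1$ and $F=e_1Be_1/e_1\rad(B)e_1=\End_A(S)$.

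If $A$ is maximal, the classification of central maximal orders over complete reduced local rings of dimension one forces $R$ to be integrally closed, hence a complete DVR, and $B\cong\mathcal{O}_D$ for some central division algebra $D$ over $K$. The unique simple $B$-module $S$ then satisfies $\rad(B)/\rad^2(B)\cong S$ because $\rad(B)=B\pi$ is principal, yielding both conditions.

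For the converse, the standard identification
$$
\Ext^1_B(S_1,S_j)\;\cong\;\Hom_B\!\bigl(e_j\rad(B)e_1/e_j\rad^2(B)e_1,\,S_j\bigr)
$$
translates the conditions into $e_j\rad(B)e_1=e_j\rad^2(B)e_1$ for $j\ne 1$. Iterating and invoking Krull's intersection theorem $\bigcap_m\rad^m(B)=0$ in the complete semiperfect setting yields $e_j\rad(B)e_1=0$, and therefore $e_jBe_1=0$ (since $e_jBe_1\subseteq\rad(B)$ for $j\ne 1$ in the basic case). Tensoring with $K$, the vanishing of the Peirce component $e_j\Lambda_Be_1$ in the semi-simple rational envelope $\Lambda_B:=K\otimes_RB$ forces also $e_1\Lambda_Be_j=0$ (two orthogonal idempotents of a semi-simple algebra have simultaneously zero mutual Peirce components), whence $e_1Be_j=0$ by torsion-freeness of $B$ over $R$. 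Thus $e_1$ is a central idempotent of $B$, and indecomposability forces $n=1$. The algebra $B$ is then basic local with $\rad(B)/\rad^2(B)\cong F$ (condition on $\Ext^1(S,S)$), so $\rad(B)=B\pi$ is principal; iteration gives $\rad^m(B)=B\pi^m$ with $\bigcap_mB\pi^m=0$. Standard arguments on non-commutative discrete valuation rings then identify $B$ with a maximal order $\mathcal{O}_D$ for a central skew field $D$ over a complete DVR, forcing $R$ to be a DVR in the process.

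The principal obstacle is this last step of the converse: verifying rigorously that a complete, basic, local, central $R$-order $B$ with principal maximal left ideal is automatically a maximal order. This requires establishing that $B$ is a domain with skew field of fractions $D$, that $R=Z(B)$ is integrally closed (hence a DVR), and that $B$ coincides with $\mathcal{O}_D$ — each a consequence of the uniformizer structure $\rad(B)=B\pi$ combined with Krull intersection and finite generation of $B$ over $R$, but needing careful assembly since the usual structural theorems for maximal orders already presuppose that the center is a DVR.
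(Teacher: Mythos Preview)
Your approach is correct and genuinely different from the paper's. Both reduce to the complete local case, but the paper never passes to the basic algebra: it works directly with the projective cover $P$ of $S$, uses the Ext conditions to show $\rad(P)/\rad^2(P)\cong S$, obtains a surjection $P\twoheadrightarrow\rad(P)$ which is then injective because both sides have the same rational envelope, and bootstraps to ``every indecomposable $A$--lattice is $\cong P$'', finishing with the overorder lemma (Lemma~\ref{L:OverorderTechnical}). Your route instead uses the Peirce decomposition: the Krull--intersection iteration (which does work, since $e_j\rad(B)e_1=e_j\rad^2(B)e_1$ for \emph{all} $j\ne 1$ lets you push each summand $e_j\rad(B)e_k\cdot e_k\rad(B)e_1$ of $e_j\rad^2(B)e_1$ into $e_j\rad^3(B)e_1$) together with the semi--simple envelope trick $e_j\Lambda_Be_1=0\Rightarrow e_1\Lambda_Be_j=0$ forces $e_1$ to be central, hence $n=1$. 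This makes the role of connectedness of $B$ (locality of the center) very transparent, while the paper's argument stays module--theoretic throughout and avoids any structural identification of $B$.

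For the step you flag as the principal obstacle, your proposed route through $B\cong\mathcal{O}_D$ has a real circularity: to get a two--sided valuation you need $\pi B=B\pi$, i.e.\ that the conjugate overorder $\pi^{-1}B\pi\supseteq B$ equals $B$, which is exactly maximality. The clean way out from your position is the paper's endgame. Since $K\cdot\rad(B)=\Lambda_B$, the element $\pi$ is left--invertible, hence a unit, in the artinian ring $\Lambda_B$; therefore every nonzero $b\in B$ is $u\pi^m$ with $u\in B^\ast$, so every nonzero left ideal of $B$ is some $B\pi^m\cong B$. Now any $B$--lattice $L$ embeds in $B^{\oplus N}$, giving a nonzero map $L\to B$ whose image is $\cong B$, hence projective; thus $B$ is a direct summand of $L$, and induction shows every lattice is free. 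An overorder $B'$ is then $\cong B$ as a left $B$--module and Lemma~\ref{L:OverorderTechnical} gives $B'=B$. (As a byproduct $B$ is a domain, whence $R$ is a domain and a posteriori a DVR, confirming your structural picture.)
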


\begin{proof} If $A_{\idm}$ is a maximal order then there exists a unique simple
$A$--module $S$ supported at $\idm$; see for instance  \cite[Theorem 18.7]{ReinerMO}. Moreover,  we have an isomorphism of left $F$--modules 
$\Ext_A^1(S, S) \cong F$. If $T$ is a simple $A$--module such that $T \not\cong S$ then $\mathsf{Supp}(T) \ne \mathsf{Supp}(S)$ and $\Ext^1_A(S, T) = 0$. 

\medskip
\noindent
To prove the converse direction, we may without loss of generality assume $R$ to be local and complete. Consider the short exact sequence in $A-\mathsf{mod}$:
$$
0 \lar Q \lar P \stackrel{\pi}\lar S \lar 0,
$$
where $P$ is a projective cover of $S$ and $Q = \mathsf{rad}(P)$ its radical. Then $P$ is indecomposable and the following sequence of left $F$--modules is exact:
$$
0 \lar \End_A(S) \stackrel{\pi^\ast}\lar \Hom_A(P, S) \lar \Hom_A(Q, S) \lar \Ext^1_A(S, S) \lar 0.
$$
Let $Q' = \mathsf{rad}(Q)$. Since $\pi^\ast$ is an isomorphism, we get:
$
\Hom_A\bigl(Q/Q', S\bigr) \cong \Hom_A(Q, S) \cong F.
$
Let $T \not\cong S$ be a simple $A$--module. Then we get an exact sequence
$$
0 \lar \Hom_A(S, T) \lar \Hom_A(P, T) \lar \Hom_A(Q, T) \lar \Ext^1_A(S, T) \lar 0.
$$
Since $P$ is a projective cover of a simple module $S$, we have: $\Hom_A(P, T) = 0$. By assumption, 
$\Ext^1_A(S, T) = 0$, hence $\Hom_A(Q, T) = 0$, too.  It follows that $Q/Q' \cong S$. Hence, there exists  a surjective homomorphism  of $A$--modules 
$P \stackrel{\nu}\lar Q$. Next,  $\bar{P} = \ker(\nu)$ is an $A$--lattice. Since $Q$ is a sublattice of $P$, we conclude that $K \otimes_R Q \cong K \otimes P$ and $K \otimes_R \bar{P} = 0$. Hence, $\bar{P} = 0$ and $\nu$ is an isomorphism. 

\smallskip
\noindent
Using induction on the length, one can show now that 
for any sublattice $P' \subseteq P$  we have: $P' \cong P$.
Moreover, we claim that 
$\widetilde{P}:= \Lambda \otimes_A P \cong K \otimes_R P$ is an indecomposable $\Lambda$--module. Indeed, if $\widetilde{P} \cong \widetilde{P}_1 \oplus \widetilde{P}_2$  then there exist  $A$--sublattices $P_i \subset \widetilde{P}_i$ and $P_1 \oplus P_2$ is a sublattice of $P$. From what was proven above it follows that $P_1 \oplus P_2 \cong P$. However, $P$ is indecomposable, hence $P_1 = 0$ or $P_2 = 0$. Thus,  $\widetilde{P}_1 = 0$ or $\widetilde{P}_2 = 0$, implying the claim. 

\smallskip
\noindent
Since $\widetilde{P}$ is an indecomposable projective $\Lambda$--module,  for any $A$--submodule $0 \ne X \subseteq P$ holds: $P/X$ has finite length and $X \cong P$. It implies   that for any indecomposable projective $A$--module $U \not\cong P$ we have: $\Hom_A(U, P) = 0$. Since $Z(A) = R$ is local, the algebra $A$ is connected. Since its rational envelope $\Lambda$  is semi--simple, we conclude that  there exists an isomorphism of left $A$--modules $A \cong P^{\oplus n}$ for some $n \in \NN$.

\smallskip
\noindent
Let $L$ be an indecomposable $A$--lattice. Then there exists an injective homomorphism of $A$--modules $L \xhar A^{\oplus m} \cong P^{\oplus mn}$ for some $m \in \NN$, hence $\Hom_A(L, P) \ne 0$. From what was proven above 
it follows that $L \cong P$. 

\smallskip
\noindent
Assume now that $A \subseteq A' \subset \Lambda$ is an overorder. Then $A'$ is an $A$--lattice, rationally equivalent to $A$.  Hence, $A$ and $A'$ are isomorphic as left $A$--modules. Lemma \ref{L:OverorderTechnical} implies that $A' = A$. Hence, the order $A$ is maximal, as asserted. 
\end{proof}

\subsection{Morita equivalences of central orders}
We developed all necessary tools to prove the following result.

\begin{proposition}\label{P:CentralMoritaOrders}
Let $A$ and $B$ be two central $R$--orders, whose  rational envelopes are semi--simple central $K$--algebras $\Lambda$ and $\Gamma$, respectively. Then $A$ and $B$ are centrally Morita equivalent if and only if the following conditions are satisfied:
\begin{itemize}
\item $\Lambda$ and $\Gamma$ are centrally Morita equivalent; 
\item we have: $\mathfrak{S}_A = \mathfrak{S}_B$;
\item for any $\idm \in \mathfrak{S}_A$,  the $\widehat{R}_{\idm}$--orders $\widehat{A}_{\idm}$ and $\widehat{B}_{\idm}$ are centrally Morita equivalent. 
\end{itemize}
\end{proposition}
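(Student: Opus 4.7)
The plan is to handle the two directions separately, with the converse being the more delicate one.

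For the forward direction, I will take a central $(B-A)$-Morita bimodule $P$ and base-change. The bimodules $K\otimes_R P$, $P_\idm = R_\idm\otimes_R P$, and $\widehat{P}_\idm = \widehat{R}_\idm\otimes_R P$ inherit the Morita property because the progenerator condition $\add(P)=\add(A)$ and the balancing isomorphism $B\cong\End_A(P)$ are preserved under flat base change of finitely generated modules over noetherian rings. This settles (i) and (iii) directly, while (ii) follows from Theorem \ref{T:MaximalityCategorical}: non-maximality of $A_\idm$ is encoded by the failure of certain Ext-conditions on simple $A$-modules supported at $\idm$, and these conditions---as well as the support itself, thanks to centrality---are preserved under the Morita equivalence $A-\Mod\to B-\Mod$.

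For the converse, fix a central $(\Gamma-\Lambda)$-Morita bimodule $V$ from (i). For each $\idm\in\mathfrak{S}_A$, (iii) supplies a central $(\widehat{B}_\idm-\widehat{A}_\idm)$-Morita bimodule $\widetilde{P}(\idm)$; its rational envelope $\widehat{K}_\idm\otimes_{\widehat{R}_\idm}\widetilde{P}(\idm)$ is a central $(\widehat{\Gamma}_\idm-\widehat{\Lambda}_\idm)$-Morita bimodule which, by a factor-wise application of Theorem \ref{T:SkolemNoether}, is isomorphic to $\widehat{V}_\idm$. Fixing such an isomorphism embeds $\widetilde{P}(\idm)$ as a full $\widehat{R}_\idm$-sublattice $P(\idm)\subset\widehat{V}_\idm$. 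I then choose any $(B-A)$-bisublattice $L\subset V$---for instance, the $B$-$A$-bisubmodule generated by any finite $K$-generating set of $V$---and apply Theorem \ref{T:LocalModification} to produce the unique $R$-lattice $P\subset V$ satisfying $\widehat{P}_\idm=P(\idm)$ for $\idm\in\mathfrak{S}_A$ and $\widehat{P}_\idm=\widehat{L}_\idm$ otherwise. Since each prescribed completion is stable under the left action of $\widehat{B}_\idm$ and the right action of $\widehat{A}_\idm$, the intersection-in-$V$ description given by Lemma \ref{L:LatticeRationalLocal} shows that $P$ is a $(B-A)$-sub-bimodule of $V$; centrality is inherited from $V$.

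It remains to verify that $P$ is a central Morita bimodule. The strategy is to check the Morita properties locally at each $\idm\in\Max(R)$ and then globalize. Once the local claim is in hand at every $\idm$, the balancing isomorphisms $\widehat{B}_\idm\to\End_{\widehat{A}_\idm}(\widehat{P}_\idm)$ combine with the identification $\End_A(P)\otimes_R\widehat{R}_\idm\cong\End_{\widehat{A}_\idm}(\widehat{P}_\idm)$ (valid since $P$ is a finitely generated $A$-module and $A$ is noetherian) to force $B\to\End_A(P)$ to be an isomorphism. The progenerator property follows because the trace ideal $\tr(P)\subseteq A$ satisfies $\widehat{\tr(P)}_\idm=\widehat{A}_\idm$ for every $\idm$, which forces $\tr(P)=A$ and hence $\add(P)=\add(A)$.

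The main obstacle will be the verification at the points $\idm\notin\mathfrak{S}_A$, where no local Morita bimodule was prescribed and I must show that $\widehat{L}_\idm\subset\widehat{V}_\idm$ is \emph{automatically} a central $(\widehat{B}_\idm-\widehat{A}_\idm)$-Morita bimodule. Here both $\widehat{A}_\idm$ and $\widehat{B}_\idm$ are maximal $\widehat{R}_\idm$-orders (using $\mathfrak{S}_A=\mathfrak{S}_B$), and I will invoke two facts from the structure theory of maximal orders over complete local one-dimensional reduced excellent rings. First, indecomposable lattices over such an $\widehat{A}_\idm$ are in bijection with the simple factors of $\widehat{\Lambda}_\idm$, so $\widehat{L}_\idm$---whose rational envelope $\widehat{V}_\idm$ is a $\widehat{\Lambda}_\idm$-progenerator---contains every indecomposable projective summand type of $\widehat{A}_\idm$, making it a right $\widehat{A}_\idm$-progenerator. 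Second, the chain $\widehat{B}_\idm\hookrightarrow\End_{\widehat{A}_\idm}(\widehat{L}_\idm)\hookrightarrow\widehat{\Gamma}_\idm$ is forced to consist of equalities, since both outer terms are maximal $\widehat{R}_\idm$-orders in $\widehat{\Gamma}_\idm$ (the endomorphism ring being maximal because maximality is preserved under Morita equivalence), and a maximal order cannot be properly contained in another order sharing its rational envelope.
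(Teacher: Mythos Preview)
Your proof is correct and follows essentially the same route as the paper: base-change for the forward direction, and for the converse, Skolem--Noether to align the local Morita bimodules with $\widehat{V}_\idm$, followed by the local modification theorem to glue, with maximality handling the regular points. The only cosmetic difference is that at $\idm\notin\mathfrak{S}_A$ the paper argues directly that the injection $\widehat{B}_\idm\hookrightarrow\End_{\widehat{A}_\idm}(\widehat{L}_\idm)$ must be onto because $\widehat{B}_\idm$ is already maximal, whereas you first establish the progenerator property via the classification of indecomposable lattices over a maximal order (which is valid here since centrality plus maximality forces $\widehat{R}_\idm$ to be a complete DVR by Harada's result) and then invoke Morita-invariance of maximality---the same conclusion, reached with one extra step.
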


\begin{proof} Assume that $A$  and $B$ are centrally Morita equivalent. Theorem \ref{T:MaximalityCategorical} implies that $\mathfrak{S}_A = \mathfrak{S}_B$. 
Let $P$ be a Morita $(B-A)$--bimodule, for which the left and right actions of $R$ coincide. Then the following diagram of rings and ring homomorphisms 
\begin{equation}\label{E:CommDiag2}
\begin{array}{c}
\xymatrix{
\End_A(P) & &  B \ar[ll]_-{\lambda^P} \\
 & R \ar@{_{(}->}[lu]^-{\rho^P}  \ar@{^{(}->}[ru] & 
}
\end{array}
\end{equation}
is commutative. Passing  in (\ref{E:CommDiag2})  to localizations and completions, we conclude that 
$\widehat{A}_{\idm}$ and $\widehat{B}_{\idm}$ are centrally Morita equivalent for all $\idm \in \Max(R)$. In the same way, $\Lambda$ and $\Gamma$ are centrally Morita equivalent.

\smallskip
\noindent
Proof of the converse direction is more involved. Let $V$ be a Morita $(\Gamma-\Lambda)$--bimodule inducing a central  equivalence of categories  $\Lambda-\mathsf{mod} \lar 
\Gamma-\mathsf{mod}$. For any $\idm \in \gS := \gS_{A}$, we get 
a Morita $(\widehat{\Gamma}_{\idm}-\widehat{\Lambda}_{\idm})$--bimodule $\widehat{V}_{\idm}$, which induces  a central equivalence of categories  $\widehat{\Lambda}_{\idm}-\mathsf{mod} \lar \widehat{\Gamma}_{\idm}
-\mathsf{mod}$. Let $P(\idm)$ be a Morita $(\widehat{B}_{\idm}-\widehat{A}_{\idm})$--bimodule inducing a central  equivalence of categories $\widehat{A}_{\idm}-\mathsf{mod} \lar 
\widehat{B}_{\idm}-\mathsf{mod}$. Then $$\widetilde{P}(\idm) := \widehat{K}_{\idm}
\otimes_{\widehat{R}_{\idm}} P(\idm) \cong  \widehat{\Gamma}_{\idm}
\otimes_{\widehat{B}_{\idm}} P(\idm) \cong P(\idm) 
\otimes_{\widehat{A}_{\idm}} \widehat{\Lambda}_{\idm}
$$
is a Morita $(\widehat{\Gamma}_{\idm}-\widehat{\Lambda}_{\idm})$--bimodule, which induces  a central  equivalence of categories $\widehat{\Lambda}_{\idm}-\mathsf{mod} \rightarrow
\widehat{\Gamma}_{\idm}-\mathsf{mod}$
as well. 
Since $\widehat{\Lambda}_{\idm}$ and $\widehat{\Gamma}_{\idm}$ are semi--simple rings, 
Theorem \ref{T:SkolemNoether} implies that $\widehat{V}_{\idm}$ and $\widetilde{P}(\idm)$ are isomorphic as $(\widehat{\Gamma}_{\idm}-\widehat{\Lambda}_{\idm})$--bimodules. Therefore, we can without loss of generality assume that 
$P(\idm) \subset \widehat{V}_{\idm}$ and  the left action of 
$\widehat{B}_{\idm}$ as well as  the right action of 
$\widehat{A}_{\idm}$  on $P(\idm)$ and 
$\widehat{V}_{\idm}$
match for all $\idm \in \gS$. Our goal now is to construct a $(B-A)$--subbimodule
$P \subset V$, which induces a central equivalence of categories 
$A-\mathsf{mod} \rightarrow B-\mathsf{mod}$. 

\smallskip
\noindent
We start with an arbitrary   $R$--lattice $L \subset V$ and put: 
$Q := B \cdot L \cdot A \subset V$. Then $Q$ is a finitely generated $R$--module and $K \cdot Q = V$, i.e. $Q$ is an $R$--overlattice of $L$ with the same rational envelope  $V$. Moreover, $Q$ is a $(B-A)$--bimodule with central action of $R$. Note that  the following diagram of rings and ring homomorphisms
$$
\xymatrix{
B  \ar@{_{(}->}[d]   \ar[rr]^-{\lambda_B^Q}  & & \End_A(Q) 
\ar@{^{(}->}[d] \\
\Gamma \ar[rr]^-{\lambda_\Gamma^V} & & \End_\Lambda(V) 
}
$$
is commutative. Since  $\mu := \lambda_\Gamma^V$ is  an isomorphism, the map $\lambda := \lambda_B^Q$ is injective. Hence,  $ 
\widehat{B}_{\idm} \stackrel{\widehat{\lambda}_{\idm}}\lar 
\End_{\widehat{A}_{\idm}}(\widehat{Q}_{\idm})$ is injective for any $\idm \in \Max(R)$. 

\smallskip
\noindent
We claim  that $\widehat{\lambda}_{\idm}$ is  an isomorphism for all
$\idm \in \Max(R) \setminus \gS$. Indeed, $ 
\widehat{\Gamma}_{\idm} \stackrel{\widehat{\mu}_{\idm}}\lar 
\End_{\widehat{\Lambda}_{\idm}}(\widehat{V}_{\idm})$ is an isomorphism and 
$ 
\widehat{B}_{\idm}$ is a maximal order in the semi--simple algebra $ 
\widehat{\Gamma}_{\idm}$. Hence, 
$\widehat{\lambda}_{\idm}\bigl(\widehat{B}_{\idm} \bigr)$ is a maximal order in the semi--simple algebra $\End_{\widehat{\Lambda}_{\idm}}(\widehat{V}_{\idm})$ and as a consequence,  we get:
$\widehat{\lambda}_{\idm}\bigl(\widehat{B}_{\idm}\bigr) = \End_{\widehat{A}_{\idm}}(\widehat{Q}_{\idm})$.

\smallskip
\noindent
According with  Lemma \ref{L:NonRegLocus}, the set $\gS$ is finite. By Proposition
\ref{P:OrderPrescribedCompllattice},  there exists a unique right $A$--lattice $P \subset V$ such that 
$
\widehat{P}_{\idm} =
\left\{
\begin{array}{cl}
\widehat{Q}_{\idm} & \mbox{\rm if} \;  \idm \notin \gS \\
P(\idm) & \mbox{\rm if} \;  \idm \in \gS.
\end{array}
\right.
$

\noindent
It follows from Lemma \ref{L:LatticeRationalLocal} that $B \cdot P = P$, i.e.~ $P$ is an $(B-A)$--bimodule with central action of $R$. Since 
$\widehat{P}_{\idm}$ is a right $\widehat{A}_{\idm}$--progenerator 
for any $\idm \in \Max(R)$,  $P$ is a right $A$--progenerator. 
The ring homomorphism $B  \stackrel{\lambda_B^P}\lar  \End_A(P)$ is an isomorphism, since so are its completions for all $\idm \in \Max(R)$. Hence, $P$ is a Morita
$(B-A)$--bimodule, which induces a central equivalence of categories  $A-\mathsf{mod} \lar  
B-\mathsf{mod}$ we are looking for. 
\end{proof}

\begin{lemma}\label{L:ConjugateMorita}
Let $\Lambda$ be a semi--simple central $K$--algebra, $A \subset \Lambda$ be a central $R$--order, $\upsilon \in \Lambda^\ast$ and $B:= \upsilon A \upsilon^{-1}$. Then $A$ and $B$ are centrally Morita equivalent.
\end{lemma}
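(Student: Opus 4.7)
The plan is to exhibit an explicit $(B-A)$--Morita bimodule and verify it realizes a central equivalence. The natural candidate is the left ideal $P := \upsilon A \subseteq \Lambda$. Since $B \cdot \upsilon A = \upsilon A \upsilon^{-1} \upsilon A = \upsilon A$, left multiplication inside $\Lambda$ equips $P$ with the structure of a $(B-A)$--subbimodule of $\Lambda$.

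Next I would verify the Morita conditions directly. The assignment $P \lar A$, $\upsilon a \mapsto a$, is an isomorphism of right $A$--modules, so $P$ is a right $A$--progenerator. Under this identification, the structural homomorphism $\lambda^P : B \lar \End_A(P)$ is intertwined with the conjugation map $\Ad_{\upsilon^{-1}} : B \lar A$, $b \mapsto \upsilon^{-1} b \upsilon$, composed with the canonical isomorphism $A \lar \End_A(A_A)$. Since $\Ad_{\upsilon^{-1}}$ is an isomorphism of rings with inverse $\Ad_{\upsilon}$, it follows that $\lambda^P$ is a ring isomorphism; the symmetric argument shows that $\rho^P : A^\circ \lar \End_B({}_BP)$ is an isomorphism as well. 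Hence $P$ is a balanced $(B-A)$--Morita bimodule.

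Finally, centrality of the induced equivalence $\Phi_P := P \otimes_A - : A-\Mod \lar B-\Mod$ is the only point where the assumption on the center enters. By Lemma \ref{L:MoritaCentralMap} (cf.\ Remark \ref{R:Pseudocommutative}), it suffices to check that $r \cdot x = x \cdot r$ for all $r \in R$ and $x \in P$. But $P \subseteq \Lambda$ and $R \subseteq Z(\Lambda)$, so for any $a \in A$ we compute $r \cdot (\upsilon a) = \upsilon(r a) = \upsilon(a r) = (\upsilon a) \cdot r$ inside $\Lambda$. There is no serious obstacle: the whole argument is driven by the fact that $R$ lies in the center of $\Lambda$, which simultaneously guarantees that conjugation by $\upsilon$ preserves the ambient $R$--algebra structure and that the resulting Morita equivalence is central.
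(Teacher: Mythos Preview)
Your proof is correct and follows essentially the same approach as the paper: both use the bimodule $P = \upsilon A \subset \Lambda$, verify it is a right $A$--progenerator via $P \cong A_A$, and check centrality from $R \subseteq Z(\Lambda)$. The only minor difference is in the verification that $\lambda^P : B \to \End_A(P)$ is an isomorphism: the paper argues surjectivity by passing to the rational envelope $\Lambda_\Lambda$, whereas you transport along $P \cong A_A$ to identify $\lambda^P$ with $\Ad_{\upsilon^{-1}}$ composed with the canonical $A \cong \End_A(A_A)$ --- a slightly slicker route to the same conclusion.
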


\begin{proof}
Let $P:= \upsilon A$. Obviously, $P \cong A$ as right $A$--modules. In particular, $P$ is a right $A$--progenerator. Note that $B \cdot P = 
\bigl(\upsilon A \upsilon^{-1}\bigr)\cdot (\upsilon A) = \upsilon A = P$. It follows that $P$ is an $(B-A)$--bimodule with central $R$--action.

\noindent
Since $\upsilon \in \Lambda^\ast$, the ring homomorphism 
$B \stackrel{\lambda^P_B}\lar \End_A(P)$ is injective. Let $f \in \End_A(P)$.
As  $P$ is a right $A$--lattice, whose rational envelope is  $\Lambda_{\Lambda}$, there exists $b \in \Lambda$ such that $f = \lambda^P_b$.  Since 
$b \upsilon A \subseteq \upsilon A$, there exists $a \in A$ such that $b \upsilon = \upsilon a$. It follows that $b = \upsilon a \upsilon^{-1} \in B$. Hence, $\lambda^P_B$ is an isomorphism and the $(B-A)$--bimodule $P$ induces a central equivalence we are looking for. 
\end{proof}

\subsection{Morita equivalences of non--commutative curves}

\noindent
A \emph{reduced non--commutative curve} (abbreviated as \emph{rncc}) is a ringed space $\bbX = (X, \kA)$, where $X$ is a reduced excellent noetherian scheme of pure dimension one and $\kA$ is a sheaf of $\kO$--orders (i.e $A(U)$ is an $O(U)$--order for any affine open subset $U \subseteq X$). Following Definition \ref{D:NCNS}, we say that $\bbX$ is \emph{central} if 
$O_x = Z(A_x)$ for all $x \in X$.

\smallskip
\noindent
From now on, let $\bbX$ be a central rncc. If $\kK$ is the sheaf of rational functions on $X$ then $K := \Gamma(X, \kK)$ is a semi--simple ring. Moreover, $\Lambda_{\bbX}:= \Gamma(X, \kK \otimes_\kO \kA)$ is a central semi--simple $K$--algebra. This algebra can be viewed as the ``ring of rational functions'' on  $\bbX$. If $\Coh_0(\bbX)$ denotes the abelian category of 
objects of finite length in $\Coh(\bbX)$ then the functor 
$\Gamma(X, \kK \otimes_\kO - )$ induces an equivalence of categories
$
\Coh(\bbX)/\Coh_0(\bbX) \simeq \Lambda_{\bbX}-\mathsf{mod}.
$

\smallskip
\noindent
For any $x \in X$, we have a central $\widehat{O}_x$--order $\widehat{A}_x$, whose rational envelope can be canonically identified with the  semi--simple ring
$\widehat{\Lambda}_x = \widehat{K}_x \otimes_K \Lambda_{\bbX}$. Let
\begin{equation}
\gS_{\bbX} := \{x \in X \, \big|\, \widehat{A}_x 
 \; \mbox{is not a maximal order in} \; 
\widehat{\Lambda}_x\}
\end{equation}
be the locus of \emph{non--regular} points of $\bbX$. According to Lemma \ref{L:NonRegLocus}, $\gS_{\bbX}$ is a finite set.  Let 
$x \in X$ be a \emph{regular}  point. By  \cite[Lemma 2.3]{Harada}, 
$\widehat{O}_x = Z\bigl(\widehat{\Lambda}_x\bigr)$ is a discrete valuation ring. 
It follows that $\widehat{K}_x$ is a field and $\widehat{\Lambda}_x$ is a central simple 
$\widehat{K}_x$--algebra. Hence, there exists a skew field $F_x \supseteq \widehat{K}_x$ (such that $Z(F_x) = \widehat{K}_x$) and $n = n(x) \in \NN$ such that $\widehat{\Lambda}_x \cong \Mat_n(F_x)$.
Moreover,  one can explicitly describe the order
$\widehat{A}_x$ in this case. 
Namely, there  exists a uniquely determined maximal $\widehat{R}_x$--order $T_x \subset F_x$; see \cite[Theorem 12.8]{ReinerMO}, and  $\Mat_n(T_x)$ can be identified with a maximal order in $\widehat{\Lambda}_x$. Moreover, any two maximal orders $O_x', O_x'' \subset \widehat{\Lambda}_x$ are conjugate, i.e.~there exists $\upsilon \in \widehat{\Lambda}_x^\ast$ such that $O_x'= \upsilon 
O_x'' \upsilon^{-1}$; 
 see \cite[Theorem 17.3]{ReinerMO}. In particular, the orders $O_x'$ and $O_x''$ are centrally Morita equivalent (see Lemma \ref{L:ConjugateMorita}) and 
 $\widehat{A}_x \cong \Mat_n(T_x)$.

\begin{proposition}\label{P:CentralMoritaEqNCC}
Let $\bbX = (X, \kA)$ and $\bbY = (X, \kB)$ be two central rncc with the same central curve
$X$. Then the categories  $\Qcoh(\bbX)$ and 
$\Qcoh(\bbY)$ are \emph{centrally} equivalent if any only if  the following conditions are satisfied:
\begin{itemize}
\item the semi--simple $K$--algebras $\Lambda_{\bbX}$ and $\Lambda_{\bbY}$ are centrally Morita equivalent; 
\item we have: $\mathfrak{S}_{\bbX} = \mathfrak{S}_{\bbY}$;
\item for any $x \in \mathfrak{S}_{\bbX}$,  the $\widehat{O}_{x}$--orders $\widehat{A}_{x}$ and $\widehat{B}_{x}$ are centrally Morita equivalent. 
\end{itemize}
\end{proposition}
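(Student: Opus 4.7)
The plan is to imitate the proof of Proposition \ref{P:CentralMoritaOrders} at the sheaf level: combine Theorem \ref{T:MoritaTheoremNCNS} (to identify central equivalences with central Morita bimodules), Theorem \ref{T:SkolemNoether} (to match generic and local data), and Proposition \ref{P:OrderPrescribedCompllattice} (to construct a global bimodule with prescribed completions). For the forward direction, let $\Qcoh(\bbX) \stackrel{\Phi}\lar \Qcoh(\bbY)$ be a central equivalence. Since the induced isomorphism of central schemes $\Phi_c$ is $\id_X$, Theorem \ref{T:MoritaTheoremNCNS} provides a local right progenerator $\kP \in \VB(\bbX^\circ)$ and an isomorphism $\kB \lar \mathit{End}_\kA(\kP)$ for which the left and right $\kO$-actions on $\kP$ coincide. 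Tensoring with the sheaf of rational functions $\kK$ makes $V := \Gamma(X, \kK \otimes_\kO \kP)$ into a central Morita $(\Lambda_\bbY - \Lambda_\bbX)$-bimodule, while completing at any $x \in X$ makes $\widehat{\kP}_x$ into a central Morita $(\widehat{B}_x - \widehat{A}_x)$-bimodule; in particular $\widehat{A}_x$ is maximal if and only if $\widehat{B}_x$ is, so $\mathfrak{S}_\bbX = \mathfrak{S}_\bbY$.

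For the converse, fix a central Morita $(\Lambda_\bbY - \Lambda_\bbX)$-bimodule $V$ and central Morita $(\widehat{B}_x - \widehat{A}_x)$-bimodules $P(x)$ for $x \in \mathfrak{S} := \mathfrak{S}_\bbX$. The rational envelope $\widetilde{P}(x) := \widehat{K}_x \otimes_{\widehat{R}_x} P(x)$ is a central Morita bimodule between $\widehat{\Lambda}_{\bbY,x}$ and $\widehat{\Lambda}_{\bbX,x}$, as is $\widehat{V}_x := \widehat{K}_x \otimes_K V$, and by Theorem \ref{T:SkolemNoether} these two bimodules are isomorphic; we therefore embed $P(x) \hookrightarrow \widehat{V}_x$ in a way that respects both actions. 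Next, construct a coherent $(\kB-\kA)$-bimodule $\kQ$ with central $\kO$-action and rational envelope $V$, globalizing the affine recipe $Q := B \cdot L \cdot A$ from the proof of Proposition \ref{P:CentralMoritaOrders}. At every regular point $x \notin \mathfrak{S}$, the canonical morphism $\widehat{B}_x \hookrightarrow \End_{\widehat{A}_x}(\widehat{\kQ}_x)$ is injective, its source is a maximal order, and its image is contained in a maximal order of the semi-simple algebra $\End_{\widehat{\Lambda}_{\bbX,x}}(\widehat{V}_x) \cong \widehat{\Lambda}_{\bbY,x}$; hence it is an isomorphism and $\widehat{\kQ}_x$ is already a central Morita $(\widehat{B}_x - \widehat{A}_x)$-bimodule.

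Since $\mathfrak{S}$ is finite by Lemma \ref{L:NonRegLocus}, we apply Proposition \ref{P:OrderPrescribedCompllattice} on each affine open subset of $X$ to produce a uniquely determined coherent subsheaf $\kP \subset V$ whose completions satisfy $\widehat{\kP}_x = \widehat{\kQ}_x$ for $x \notin \mathfrak{S}$ and $\widehat{\kP}_x = P(x)$ for $x \in \mathfrak{S}$. The affine-local pieces glue by the uniqueness clause of the theorem combined with Lemma \ref{L:LatticeRationalLocal}, and the same lemma shows that $\kP$ is stable under the left $\kB$- and right $\kA$-actions and that the canonical map $\kB \lar \mathit{End}_\kA(\kP)$ is an isomorphism (since it is one after completion at every closed point). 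Each $\widehat{\kP}_x$ being a right $\widehat{A}_x$-progenerator, $\kP$ belongs to $\VB(\bbX^\circ)$ as a local right progenerator, and Theorem \ref{T:MoritaTheoremNCNS} yields the desired central equivalence $\Qcoh(\bbX) \simeq \Qcoh(\bbY)$. The main difficulty is the sheaf-theoretic patching, controlled simultaneously by Theorem \ref{T:SkolemNoether} (matching each $P(x)$ with the global $V$) and the uniqueness in the local modification theorem (ensuring the modifications glue across affine charts).
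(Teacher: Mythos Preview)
Your argument is correct and follows essentially the same route as the paper: both directions reduce to the affine Proposition \ref{P:CentralMoritaOrders}, with the forward direction extracting $V$ and the $\widehat{P}_x$ from the Morita bimodule $\kP$ supplied by Theorem \ref{T:MoritaTheoremNCNS}, and the converse using Skolem--Noether to align each $P(x)$ inside $\widehat{V}_x$ and then the local modification theorem to glue. Your explicit mention of the gluing via uniqueness in Proposition \ref{P:OrderPrescribedCompllattice} makes precise what the paper leaves implicit when it invokes Theorem \ref{T:LocalModification} at the sheaf level; the only cosmetic difference is that the paper appeals to Theorem \ref{T:MaximalityCategorical} for $\mathfrak{S}_{\bbX} = \mathfrak{S}_{\bbY}$, whereas you deduce it directly from the local Morita equivalences, which is also valid since maximality is a Morita invariant.
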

\begin{proof} This result is just a global version of Proposition \ref{P:CentralMoritaOrders}
and the proof below is basically a ``sheafified'' version  of the arguments  from  the affine case. 
 
\smallskip
\noindent 
 Let $\Qcoh(\bbX) \stackrel{\Phi}\lar
\Qcoh(\bbY)$ be a central equivalence of categories. 
According to Theorem \ref{T:MoritaNonCommCurves}, we have:  $\Phi \cong \kP \otimes_{\kA} \,-\,$, where 
$\kP$ is a sheaf of $(\kB-\kA)$--bimodules such that 
$\kP \in \VB(\bbX^\circ)$ is a local right progenerator and 
$\kB \stackrel{\lambda_\kB^\kP}\lar \bigl(\mathit{End}_{\kA}(\kP)\bigr)$ an isomorphism of $\kO$--algebras. Let $V:= \Gamma(X, \kK \otimes \kP)$. Then $V$ is a Morita 
$(\Lambda_{\bbY}-\Lambda_{\bbX})$--bimodule inducing a central   Morita equivalence 
$\xymatrix{\Lambda_{\bbX} \ar@{.>}[r] & \Lambda_{\bbY}}$. 
Similarly, for any $x \in X$ we get a central Morita $\bigl(\widehat{B}_x - \widehat{A}_{x}\bigr)$--bimodule
$\widehat{P}_{x}$.  Finally, 
Theorem \ref{T:MaximalityCategorical} implies that  $\mathfrak{S}_{\bbX} = \mathfrak{S}_{\bbY}$.

\smallskip
\noindent
Conversely, assume that rncc $\bbX$ and $\bbY$ satisfy three conditions above. Let $V$ be a Morita
$(\Lambda_{\bbY}-\Lambda_{\bbX})$--bimodule with central action of $K$. 
Let $\widetilde\kA = \kK \otimes_\kO \kA$ and $\widetilde\kB = \kK \otimes_\kO \kB$.
Passing to sheaves, we get 
a balanced $(\widetilde{\kB}-\widetilde{\kA})$--bimodule $\kV$ with central action of $\kK$. Let $\kL \subset \kV$ be any sheaf of  $\kO$--lattices, i.e.~a coherent submodule of $\kV$ such that the canonical morphism $\kK \otimes_\kO \kL \rightarrow \kV$ is an isomorphism. 
Then  $\kQ := \kB \cdot \kL \cdot \kA \subset \kV$  is a sheaf of lattices with rational envelope $\kV$. Moreover, $\kQ$  is  a  $({\kB}-{\kA})$--bimodule having a  central action of $\kO$. As in the proof of Proposition \ref{P:CentralMoritaOrders}, one can  show that for any $x \in \mathfrak{S} := \mathfrak{S}_{\bbX}$ there exists  a Morita $(\widehat{B}_x-\widehat{A}_x)$--bimodule $P(x) \subset \widehat{V}_x$, having a  central action of $\widehat{O}_x$. By Theorem \ref{T:LocalModification}, there exists a unique $\kP \in \VB(\bbX^\circ)$ such that 
$\kP \subset \kV$ is a right $\kA$--lattice and 
\begin{equation*}
\widehat{P}_{x} =
\left\{
\begin{array}{cl}
\widehat{Q}_{x} & \mbox{\rm if} \;  \idm \notin \gS \\
P(x) & \mbox{\rm if} \;  x \in \gS.
\end{array}
\right.
\end{equation*}
Then $\kP$ is a sheaf of  $(\kB-\kA)$--bimodules with  central action of $\kO$. Moreover,  $\kP \in \VB(\bbX^\circ)$ is a local right progenerator such that 
 $\kB  \stackrel{\lambda_\kB^\kP}\lar  \mathit{End}_{\kA}(\kP)$ is an isomorphism of $\kO$--algebras. Hence, $\Phi:= \kP \otimes_\kA \,-\,$ is a  central equivalence of categories  we are looking for. 
\end{proof}

\smallskip
\noindent
After all preparations  we can now prove the main result of this section.

\begin{theorem}\label{T:MoritaNonCommCurves}
Let $\bbX = (X, \kA)$ and $\bbY = (Y, \kB)$ be two central rncc. Then the categories  $\Qcoh(\bbX)$ and 
$\Qcoh(\bbY)$ are equivalent if any only if there exists a scheme isomorphism
$Y \stackrel{\varphi}\lar X$ satisfying the following conditions.
\begin{itemize}
\item There exists a Morita equivalence $\xymatrix{\Lambda_{\bbX} 
\ar@{.>}[r]^-{\widetilde\Phi} & \Lambda_{\bbY}}$ such  that the diagram 
\begin{equation}\label{E:MoritaBirational}
\begin{array}{c}
\xymatrix{
 \Lambda_{\bbX} \ar@{.>}[r]^-{\widetilde\Phi} & \Lambda_{\bbY}  \\
K_X \ar[r]^-{\varphi^\ast} \ar@{^{(}->}[u] & K_Y \ar@{_{(}->}[u]
}
\end{array}
\end{equation}
is ``commutative'' (here we follow the notation of Remark \ref{R:Pseudocommutative}).
\item We have:  $\varphi\bigl(\gS_{\bbY}\bigr) = \gS_{\bbX}$ and for any $y \in \gS_{\bbY}$, there exists a Morita equivalence 
$\xymatrix{\widehat{A}_{\varphi(y)} \ar@{.>}[r]^-{\Phi_y} & \widehat{B}_{y}}$ 
such  that the diagram 
\begin{equation}\label{E:MoritaComplete}
\begin{array}{c}
\xymatrix{
 \widehat{A}_{\varphi(y)}  \ar@{.>}[r]^-{\Phi_y} &  \widehat{B}_{y} \\
\widehat{O}_{\varphi(y)} \ar[r]^-{\varphi_y^\ast} \ar@{^{(}->}[u] &  
\widehat{O}_{y} \ar@{_{(}->}[u]
}
\end{array}
\end{equation}
is ``commutative''.
\end{itemize}
\end{theorem}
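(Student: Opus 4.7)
The plan is to reduce Theorem~\ref{T:MoritaNonCommCurves} to Proposition~\ref{P:CentralMoritaEqNCC}, which already handles the case of two central rncc sharing the same underlying commutative curve. The bridge is Theorem~\ref{T:MorphismCentralSchemes}, which canonically produces the scheme isomorphism $\varphi$ from any categorical equivalence and allows one to transport $\bbX$ along it. Concretely, given any isomorphism $\varphi \colon Y \to X$, I form the rncc $\widetilde{\bbX} := (Y, \varphi^{-1}\kA)$ on $Y$; this carries a tautological central equivalence $\Psi_\varphi \colon \Qcoh(\bbX) \simeq \Qcoh(\widetilde{\bbX})$, which induces $\varphi$ on the underlying central schemes and canonical identifications $\Lambda_\bbX \cong \Lambda_{\widetilde{\bbX}}$ (compatible with $K_X \cong K_Y$ via $\varphi^\ast$) and $\widehat{A}_{\varphi(y)} \cong \widehat{\widetilde{A}}_y$ (compatible with $\widehat{O}_{\varphi(y)} \cong \widehat{O}_y$) for every $y \in Y$.

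For the ``if'' direction, which I would address first as it is cleaner, assume the data $(\varphi, \widetilde{\Phi}, \{\Phi_y\}_{y \in \gS_\bbY})$ is given. Via the identifications above, the pseudo--commutativity of (\ref{E:MoritaBirational}) and (\ref{E:MoritaComplete}) translates into central Morita equivalences $\Lambda_{\widetilde{\bbX}} \dashrightarrow \Lambda_\bbY$ and $\widehat{\widetilde{A}}_y \dashrightarrow \widehat{B}_y$ respectively, while $\varphi(\gS_\bbY) = \gS_\bbX$ becomes $\gS_{\widetilde{\bbX}} = \gS_\bbY$. Proposition~\ref{P:CentralMoritaEqNCC} then supplies a central equivalence $\Qcoh(\widetilde{\bbX}) \simeq \Qcoh(\bbY)$, and composing with $\Psi_\varphi$ produces the desired $\Qcoh(\bbX) \simeq \Qcoh(\bbY)$.

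For the ``only if'' direction, given an equivalence $\Phi \colon \Qcoh(\bbX) \simeq \Qcoh(\bbY)$, Theorem~\ref{T:MorphismCentralSchemes} produces $\varphi := \Phi_c \colon Y \to X$. The composite $\Phi \circ \Psi_\varphi^{-1} \colon \Qcoh(\widetilde{\bbX}) \simeq \Qcoh(\bbY)$ is then a \emph{central} equivalence of two rncc sharing the central curve $Y$, and Proposition~\ref{P:CentralMoritaEqNCC} delivers the three conclusions: central Morita equivalence of $\Lambda_{\widetilde{\bbX}}$ with $\Lambda_\bbY$, the equality $\gS_{\widetilde{\bbX}} = \gS_\bbY$, and central Morita equivalences $\widehat{\widetilde{A}}_y \dashrightarrow \widehat{B}_y$ at each non--regular point. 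Transporting back through $\Psi_\varphi$ yields the data required in (\ref{E:MoritaBirational}) and (\ref{E:MoritaComplete}) together with $\varphi(\gS_\bbY) = \gS_\bbX$.

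The main obstacle I anticipate is the bookkeeping needed to verify that pseudo--commutative diagrams in the sense of Remark~\ref{R:Pseudocommutative} behave correctly under the transport $\Psi_\varphi$. Specifically, a central Morita equivalence on $Y$ is by definition one inducing the identity on $\kO_Y$; one must check that, once $\varphi^{-1}\kO_X$ is identified with $\kO_Y$, this is equivalent to the commutativity of the square involving $\varphi^\ast$ in (\ref{E:MoritaBirational}), and similarly for (\ref{E:MoritaComplete}) at each point of $\gS_\bbY$. Once this naturality is in place, all the substantive content has already been absorbed into Proposition~\ref{P:CentralMoritaEqNCC} (which rests on the local modification theorem and the categorical criterion of maximality in Theorem~\ref{T:MaximalityCategorical}).
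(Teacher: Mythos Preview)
Your proposal is correct and follows essentially the same approach as the paper: both directions are reduced to Proposition~\ref{P:CentralMoritaEqNCC} via the pullback $\widetilde{\bbX} = (Y, \varphi^\ast\kA)$. The only organizational difference is that for the ``only if'' direction the paper invokes Theorem~\ref{T:MoritaTheoremNCNS} directly to obtain the progenerator $\kP$ and reads off the required Morita bimodules as $V = \Gamma(X, \kK_X \otimes \kP)$ and $\widehat{P}_{\varphi(y)}$, whereas you factor through $\Psi_\varphi$ and appeal to the ``only if'' half of Proposition~\ref{P:CentralMoritaEqNCC}; since the latter itself unwinds to the same bimodule argument, the content is identical.
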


\begin{proof} According to Theorem \ref{T:MoritaNonCommCurves}, any equivalence of categories 
 $\Qcoh(\bbX) \stackrel{\Phi}\lar
\Qcoh(\bbY)$ is isomorphic to a  functor of the form $\Phi_{\kP, \vartheta, \varphi}$, where 
$Y \stackrel{\varphi}\lar X$ is a scheme isomorphism, 
$\kP \in \VB(\bbX^\circ)$ a local right progenerator and 
$\kB \stackrel{\vartheta}\lar 
\varphi^\ast\bigl(\mathit{End}_{\kA}(\kP)\bigr)$ an isomorphism of $\kO_Y$--algebras. Let $V:= \Gamma(X, \kK_X \otimes \kP)$. Then $V$ is a Morita 
$(\Lambda_{\bbY}-\Lambda_{\bbX})$--bimodule inducing an  equivalence 
$\xymatrix{\Lambda_{\bbX} \ar@{.>}[r]^-{\widetilde\Phi} & \Lambda_{\bbY}}$ such  that the diagram (\ref{E:MoritaBirational}) is ``commutative''. Similarly, for any $y \in Y$ we get a Morita $\bigl(\widehat{B}_y - \widehat{A}_{\varphi(y)}\bigr)$--bimodule
$\widehat{P}_{\varphi(y)}$, which induces  an equivalence $\xymatrix{\widehat{A}_{\varphi(y)} \ar@{.>}[r]^-{\Phi_y} & \widehat{B}_{y}}$ 
such  that the diagram (\ref{E:MoritaComplete}) is ``commutative''. Finally, 
Theorem \ref{T:MaximalityCategorical} implies that $y \in \gS_{\bbY}$ if and only if $\varphi(y) \in \gS_{\bbX}$.

\smallskip
\noindent
Conversely, assume that we are given a scheme isomorphism $Y \stackrel{\varphi}\lar X$ as well as Morita equivalences $\xymatrix{\Lambda_{\bbX} \ar@{.>}[r]^-{\widetilde\Phi} & \Lambda_{\bbY}}$ and $\bigl(\xymatrix{\widehat{A}_{\varphi(y)} \ar@{.>}[r]^-{\Phi_y} & \widehat{B}_{y}}\bigr)_{y \in \gS_{\bbY}}$ satisfying the compatibility constraints (\ref{E:MoritaBirational}) and (\ref{E:MoritaComplete}). Let   $\bbY' = \bigl(Y, \varphi^\ast(\kA)\bigr)$. Then 
$\bbY'$ is a central  rncc  and $\gS_{\bbY} = \gS_{\bbY'}$. By Proposition \ref{P:CentralMoritaEqNCC},
 $\bbY$ and $\bbY'$ are centrally Morita equivalent rncc, implying the result.
\end{proof}

\subsection{Morita equivalences of hereditary non--commutative curves}
Recall that   a  central rncc  $\XX = (X, \kA)$ is \emph{hereditary} if 
$\widehat{A}_x$ is a hereditary order for all $x \in X$. In this case, the central curve $X$ is automatically regular; see  \cite[Theorem 2.6]{Harada}. We may without loss of generality assume  $X$ to be connected, hence $\Lambda_{\bbX}$ is a central simple $K_X$--algebra, which defines an element in  the Brauer group of the function field $K_X$. 

\smallskip
\noindent
Following the notation of the previous subsection, for any $x \in X$ there exists
a skew field $F_x$ (whose center is $\widehat{K}_x$) and $n = n(x) \in \NN$ such that $\widehat{\Lambda}_x \cong \Mat_n(F_x)$. Any maximal order in $\widehat{\Lambda}_x$ is conjugate to $\Mat_n(T_x)$, where $T_x$ is the unique maximal order in $F_x$.

\smallskip
\noindent
Let  $x \in \gS_{\XX}$ and $\mathfrak{t}_x$ be the Jacobson radical of $T_x$. Viewing $\widehat{H}_x$ as an order in the simple algebra $\Mat_n(F_x)$, we have the following result:
$\widehat{A}_x$ is \emph{conjugate} to the order
\begin{equation}\label{E:StandardHereditary}
H\bigl(T_x, (n_1, \dots, n_t)\bigr) :=
\left[
\begin{array}{cccc}
T_x & \mathfrak{t}_x & \dots & \mathfrak{t}_x\\
T_x & T_x & \dots & \mathfrak{t}_x\\
\vdots & \vdots & \ddots & \vdots \\
T_x & \mathfrak{t}_x & \dots & T_x\\
\end{array}
\right]^{\underline{(n_1, \dots, n_t)}} \subseteq \Mat_n(T_x)
\end{equation}
where $(n_1,  \dots,  n_t) \in \NN^t$ is  such that
$n = n_1 + \dots + n_t$. The elements of $H\bigl(T_x, (n_1, \dots, n_t)\bigr)$ are matrices, such that for any $1 \le i, j \le t$, the $(i,j)$-th entry is itself an arbitrary  matrix of size $(n_i \times n_j)$ with coefficients in $T_x$ for $i \ge j$ and  in  $\mathfrak{t}_x$ for $i < j$. The length of this tuple $t = t(x)$ (called \emph{type} of the hereditary order $\widehat{H}_x$) is equal to   the number of non--isomorphic simple $\widehat{H}_x$--modules. Moreover, $H\bigl(T_x, (n_1, \dots, n_t)\bigr) \cong H\bigl(T_x, (n'_1, \dots, n'_t)\bigr)$ if and only if 
$(n_1, \dots, n_t)$ and $(n'_1, \dots, n'_t)$ are obtained from each other by a cyclic permutation. 
 We refer to  \cite[Theorem 39.14 and Corollary 39.24]{ReinerMO}  for proofs  of all these  fundamental facts. 

\smallskip
\noindent
A point $x \in X$ is a regular point of $\XX$ if and only if $t(x) = 1$. 
It is easy to see that   $H\bigl(T_x, (n_1, \dots, n_t)\bigr)$ is centrally Morita equivalent to the basic  order $H\bigl(T_x, \underbrace{(1, \dots, 1)\bigr)}_{\scriptstyle t \; \mathrm{times}}$.
It follows from Lemma \ref{L:ConjugateMorita}
that any two hereditary orders of the same type in the simple algebra  $\widehat{\Lambda}_x$  are \emph{centrally} Morita equivalent. 
Theorem \ref{T:MoritaNonCommCurves} implies the following result, which was proven for the first time by Spie\ss{}; see \cite[Proposition 2.9]{Spiess}.

\begin{corollary}\label{C:HereditaryMorita} Let $\bbX = (X, \kA)$ and $\bbY = (Y, \kB)$ be two hereditary rnnc  and $\Lambda_{\bbX}$ and $\Lambda_{\bbY}$ the corresponding simple algebras over the function fields $K_X$ and $K_Y$, respectively. Then the categories $\Qcoh(\bbX)$ and $\Qcoh(\bbY)$
are equivalent if and only if there exists an isomorphism  $Y \stackrel{\varphi}\lar X$ such that  $t(\varphi(y)) = t(y)$ for any $y \in Y$ and 
$\bigl[\Lambda_{\bbY}\bigr] = \bigl[\varphi^*(\Lambda_{\bbX})\bigr] \in \mathsf{Br}(K_Y)$, where $\mathsf{Br}(K_Y)$ is the Brauer group of $K_Y$.
\end{corollary}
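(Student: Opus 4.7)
The plan is to specialize Theorem \ref{T:MoritaNonCommCurves} to the hereditary setting and unpack the two compatibility conditions using the classical structure theory of hereditary orders, recalled in the paragraphs preceding the corollary. Via Remark \ref{R:RemarkCentrality} we may assume both $\bbX$ and $\bbY$ are central, and (passing to connected components) that $X$ and $Y$ are connected, so that $\Lambda_{\bbX}$ and $\Lambda_{\bbY}$ are central simple algebras over the fields $K_X$ and $K_Y$. Since any hereditary rncc has a smooth central curve, an isomorphism of ringed spaces $Y \xrightarrow{\varphi} X$ coincides with an isomorphism of smooth curves, and all local completions $\widehat{O}_x$, $\widehat{O}_y$ are discrete valuation rings.

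For the forward direction, assume $\Qcoh(\bbX) \simeq \Qcoh(\bbY)$. Theorem \ref{T:MoritaNonCommCurves} provides a scheme isomorphism $Y \xrightarrow{\varphi} X$ together with a compatible Morita equivalence $\xymatrix{\Lambda_{\bbX} \ar@{.>}[r]^-{\widetilde\Phi} & \Lambda_{\bbY}}$ fitting into the diagram (\ref{E:MoritaBirational}). Interpreting $\Lambda_{\bbX}$ as a central simple $K_Y$-algebra via $\varphi^\ast$, the commutativity of (\ref{E:MoritaBirational}) says exactly that $\widetilde\Phi$ is a \emph{$K_Y$-linear} Morita equivalence between $\varphi^\ast(\Lambda_{\bbX})$ and $\Lambda_{\bbY}$; by the classical description of the Brauer group of a field this is equivalent to $[\Lambda_{\bbY}] = [\varphi^\ast(\Lambda_{\bbX})] \in \mathsf{Br}(K_Y)$. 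For the type condition, Theorem \ref{T:MaximalityCategorical} identifies the non--regular loci, giving $\varphi(\gS_{\bbY}) = \gS_{\bbX}$, and at each $y \in \gS_{\bbY}$ it yields a central Morita equivalence of $\widehat{A}_{\varphi(y)}$ and $\widehat{B}_y$. Because the type $t(x)$ is the number of isomorphism classes of simple $\widehat{A}_x$-modules, it is a Morita invariant, so $t(\varphi(y)) = t(y)$.

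For the converse, suppose $\varphi$, the Brauer equality, and the type equality are given. The equality $[\Lambda_{\bbY}] = [\varphi^\ast(\Lambda_{\bbX})]$ provides a Morita $(\Lambda_{\bbY} - \Lambda_{\bbX})$-bimodule $V$ with central $K$-action compatible with $\varphi^\ast$, supplying the first compatibility datum of Theorem \ref{T:MoritaNonCommCurves}. At each $y \in \gS_{\bbY}$, the structure theorem recalled in (\ref{E:StandardHereditary}) shows that, up to conjugation, $\widehat{A}_{\varphi(y)}$ and $\widehat{B}_y$ are the standard hereditary orders $H(T_{\varphi(y)},(n_1,\dots,n_t))$ and $H(T_y,(n'_1,\dots,n'_t))$ inside $\widehat{\Lambda}_{\varphi(y)}$ and $\widehat{\Lambda}_y$ respectively, and the condition $t(\varphi(y)) = t(y) = t$ together with the already--established Morita equivalence of $\widehat{\Lambda}_{\varphi(y)}$ and $\widehat{\Lambda}_y$ (obtained from $V$ by completion) makes both orders centrally Morita equivalent to the common basic order $H(T,(1,\dots,1))$ with $t$ ones; composing these central equivalences and invoking Lemma \ref{L:ConjugateMorita} to absorb conjugations yields the required central Morita equivalence $\Phi_y$ compatible with $\varphi_y^\ast$. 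At the (automatically Morita-trivial) regular points, no further condition is needed. Theorem \ref{T:MoritaNonCommCurves} now assembles these data into the desired equivalence $\Qcoh(\bbX) \simeq \Qcoh(\bbY)$.

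The main obstacle to watch for is the compatibility of the local Morita equivalences with the fixed isomorphisms of centers $\widehat{O}_{\varphi(y)} \cong \widehat{O}_y$ induced by $\varphi$: abstract Morita equivalence of $\widehat{A}_{\varphi(y)}$ and $\widehat{B}_y$ is weaker than \emph{central} Morita equivalence over a prescribed isomorphism of centers. The resolution is that once the generic Morita equivalence $V$ has been fixed and localized/completed to a $(\widehat{\Lambda}_{\varphi(y)} - \widehat{\Lambda}_y)$-bimodule with the correct central action, the Skolem--Noether--type rigidity (Theorem \ref{T:SkolemNoether}) forces the local Morita equivalences between maximal orders to be unique up to the inner automorphisms handled by Lemma \ref{L:ConjugateMorita}, so the standard--form presentation of hereditary orders yields the compatibility automatically.
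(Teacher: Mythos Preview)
Your proposal is correct and follows the same route as the paper: the corollary is deduced from Theorem~\ref{T:MoritaNonCommCurves} together with the structure theory of hereditary orders recalled just before the statement (in particular that every hereditary order of type $t$ is conjugate to a standard one and centrally Morita equivalent to the basic order $H(T_x,(1,\dots,1))$, via Lemma~\ref{L:ConjugateMorita}). Your final paragraph invoking Theorem~\ref{T:SkolemNoether} to nail down the compatibility with the prescribed center isomorphism is more explicit than what the paper states, but it is consistent with (and indeed implicit in) the machinery already used in the proof of Proposition~\ref{P:CentralMoritaOrders} on which Theorem~\ref{T:MoritaNonCommCurves} rests.
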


\begin{remark} In the setting  of Corollary \ref{C:HereditaryMorita},
assume additionally that $X$ and $Y$ are quasi--projective curves over an
algebraically closed field $\kk$. By  Tsen's theorem, we have: 
$\mathsf{Br}(K_X) = 0  = \mathsf{Br}(K_Y)$; see \cite[Proposition 6.2.3 and  Theorem 6.2.8]{GilleSzamuely}. It follows that $\bbX$ and $\bbY$ are Morita equivalent if and only if there exists an isomorphism  $Y \stackrel{\varphi}\lar X$ such that  $t\bigl(\varphi(y)\bigr) = t(y)$ for any $y \in Y$.
\end{remark}

\begin{remark}
A hereditary rncc  $\bbX = (X, \kA)$ is called \emph{regular} if 
$\gS_{\bbX} = \emptyset$. Assume additionally that $X$ is a \emph{projective} curve over a field $\kk$ (as already mentioned, the central curve $X$ is automatically regular in this case). Then $\Coh(\bbX)$ is a noetherian hereditary category with finite dimensional $\Hom$-- and 
$\Ext$--spaces, admitting an Auslander--Reiten translation functor $\Coh(\bbX)\stackrel{\tau}\lar \Coh(\bbX)$ such that $\tau(\kF) \cong \kF$ for any object $\kF$ of $\Coh_0(\bbX)$. Various properties of the  category $\Coh(\bbX)$  were  studied in detail (from a  slightly different perspective) by Kussin in \cite{Kussin}. 

\smallskip
\noindent
Let $\Lambda = \Lambda_{\bbX}$ be the algebra of ``rational functions'' on $\bbX$ and $K = K_X$ be its center. There exists a unique (up an isomorphism)  regular projective curve over $\kk$ (namely,  $X$ itself), whose field of rational functions is isomorphic to $K$; see for instance \cite[Proposition  7.3.13]{Liu}. Let $\kB$ be any sheaf of \emph{maximal} orders on $X$ such that $\Gamma(X, \kK \otimes_\kO \kB) \cong \Lambda$. It is well--known that the ringed spaces $\bbX = (X, \kA)$ and $\bbX' = (X, \kB)$ need not be in general isomorphic (see for instance \cite{Deuring} or \cite[Corollary]{DrozdTurchin} for examples of non--isomorphic maximal orders in the same central simple algebra). However, Corollary \ref{C:HereditaryMorita} implies that the 
categories $\Qcoh(\bbX)$ and $\Qcoh(\bbX')$ are equivalent.  Hence,  a regular rncc  $\bbX$ with central projective curve $X$ is up to a Morita equivalence determined by an element in the Brauer group 
$\mathsf{Br}(K_X)$. \qed
\end{remark}

\smallskip
\noindent
The following example shows that the compatibility constraints (\ref{E:MoritaBirational}) and (\ref{E:MoritaComplete}) are necessary to end up with a global Morita equivalence. 

\begin{example}\label{E:ExampleNonHered}
Let $\kk$ be an infinite field $S = \kk[x]$, $J = \bigl((x-\lambda')(x-\lambda'')(x^2-1)\bigr)$ for $\lambda' \ne \lambda''  \in \kk \setminus \{1, -1\}$ and 
$
H = \left(
\begin{array}{ccc}
S & J & J \\
S & S & J \\
S & S & S
\end{array}
\right).
$
Next, we put:
$$
A_+ = \left\{p  \in H \left|  \begin{array}{l}
 p_{11}(1) = p_{11}(-1) \\
 p_{22}(1) = p_{33}(1)
 \end{array}
 \right.
 \right\} \quad \mbox{\rm and} \quad
A_- = \left\{p  \in H \left|  \begin{array}{l}
 p_{11}(1) = p_{11}(-1) \\
 p_{22}(-1) = p_{33}(-1)
 \end{array}
 \right.
 \right\}. 
$$
Then we have: $R := Z\bigl(A_{\pm}\bigr) = \kk\bigl[x^2-1, x(x^2-1)\bigr] \cong 
\kk[u, v]/(f),
$
where $f = v^2-u^3-u^2$. It is clear that $A_\pm$ are $R$--orders with common rational envelope $\Lambda = \Mat_3\bigl(\kk(x)\bigr)$. Passing to the corresponding sheaves of orders,  we get a pair of  rncc  $\bbE_\pm := (E, \kA_\pm)$, where $E = V(f) \subset \mathbbm{A}^2$ is a plane nodal cubic. These curves have the same locus of non--regular points $\gS = \{s, q', q''\}$, where $s = (0, 0)$ is the singular point of $E$, whereas  $q' = \bigl(\lambda'^2-1, 
\lambda' (\lambda'^2-1)\bigr)$ and $q'' = \bigl(\lambda''^2-1, 
\lambda'' (\lambda''^2-1)\bigr)$  are two distinct  regular points of $E$. 
We have:
$$
\bigl(\widehat{A}_+\bigr)_{q'} = \bigl(\widehat{A}_-\bigr)_{q'} = \left(
\begin{array}{ccc}
O' & \idm' & \idm' \\
O' & O' & \idm' \\
O' & O' & O'
\end{array}
\right),
$$
where $O' = \widehat{O}_{q'}$ is the completed local ring of $E$ at the point $q'$ and $\idm'$ its maximal ideal. Of course, the analogous statement holds for the second point $q''$, too. 

\smallskip
\noindent
Both orders $\bigl(\widehat{A}_+\bigr)_s$ and $\bigl(\widehat{A}_-\bigr)_s$
have the common center $D = \kk\llbracket w_+, w_-\rrbracket/(w_+ w_-)$, where
$w_\pm = v \pm u(1 - u + u^2 - \dots) \in \kk\llbracket u, v\rrbracket$.  It is clear that $\bigl(\widehat{A}_+\bigr)_s$ and $\bigl(\widehat{A}_-\bigr)_s$ are  isomorphic as rings. 
However, although $\bbE_+$ and $\bbE_-$ have the same central curve $E$, the common algebra of rational functions $\Lambda$ and the same singularity types, they are \emph{not Morita equivalent}!

\smallskip
\noindent
Indeed, any equivalence of categories $\Qcoh(\bbE_+) \stackrel{\Phi}\lar \Qcoh(\bbE_-)$ induces  an automorphism $E \stackrel{\varphi}\lar E$; see 
Theorem \ref{T:MorphismCentralSchemes}. It follows that $\varphi(s) = s$ and 
$\varphi\bigl(\left\{q', q''\right\}\bigr) = \left\{q', q''\right\}$.
In particular, we get 
the restricted  Morita equivalence 
$\xymatrix{\bigl(\widehat{A}_+\bigr)_s  \ar@{.>}[r]^{\Phi_s} &  \bigl(\widehat{A}_-\bigr)_s}$.
However, any such equivalelnce induces  an automorphism of $D$ swapping both branches $(w_+)$ and $(w_-)$. If $\lambda', \lambda''$ were  chosen  sufficiently general, such an automorphism $\varphi$ does not exist. Hence, 
$\Qcoh(\bbE_+)$ and $\Qcoh(\bbE_-)$ are not equivalent, as asserted. \qed
\end{example}

\end{document}